\newtheorem{theo}{Theorem}[section]
\newtheorem*{theo*}{Theorem}
\newtheorem{prop}[theo]{Proposition}
\newtheorem*{prop*}{Proposition}
\newtheorem{lemme}[theo]{Lemma}
\newtheorem*{lemme*}{Lemma}
\newtheorem{definition}[theo]{Definition}
\newtheorem{corollaire}[theo]{Corollary}
\newtheorem{exemple}[theo]{Example}
\newtheorem{remarque}{Remark}
\newcommand{\PP}{\mathbb{P}}
\newcommand{\EE}{\mathbb{E}}
\newcommand{\NN}{\mathbb{N}}
\newcommand{\CC}{\mathbb{C}}
\newcommand{\ZZ}{\mathbb{Z}}
\newcommand{\RR}{\mathbb{R}}
\newcommand{\XX}{\mathbb{X}}
\newcommand{\Var}{\mathrm{Var}}
\newcommand{\tend}[4][]{\overset{#4}{\xrightarrow[#2\to#3]{#1}}}
\title{A central limit theorem for the variation of the sum of digits}
\author[2]{Yohan HOSTEN%\thanks{mail: \href{mailto:yohan.hosten@u-picardie.fr}{yohan.hosten@u-picardie.fr}}
}
\author[2]{Élise JANVRESSE%\thanks{mail: \href{mailto:elise.janvresse@u-picardie.fr}{elise.janvresse@u-picardie.fr}}
}
\author[1]{Thierry de la RUE%\thanks{mail: \href{mailto:Thierry.de-la-Rue@univ-rouen.fr}{Thierry.de-la-Rue@univ-rouen.fr}}
}
\affil[1]{LMRS, Université de Rouen Normandie, CNRS UMR 6085, Avenue de l'Université, 76801, Saint Étienne du Rouvray, France, \href{mailto:Thierry.de-la-Rue@univ-rouen.fr}{Thierry.de-la-Rue@univ-rouen.fr}}
\affil[2]{LAMFA, Université de Picardie Jules Verne, CNRS UMR 7352, 33, rue Saint-Leu, 80000, Amiens, France, \href{mailto:elise.janvresse@u-picardie.fr}{elise.janvresse@u-picardie.fr} and \href{mailto:yohan.hosten@u-picardie.fr}{yohan.hosten@u-picardie.fr}}
\date{}
\begin{document}

\maketitle
%\tableofcontents

%\newpage

\textbf{Abstract.} We prove a Central Limit Theorem for probability measures defined via the variation of the sum-of-digits function, in base $b\ge 2$. For $r\ge 0$ and $d\in\ZZ$, we consider $\mu^{(r)}(d)$ as the density of integers $n\in\NN$ for which the sum of digits increases by $d$ when we add $r$ to $n$. We give a probabilistic interpretation of $\mu^{(r)}$ on the probability space given by the group of $b$-adic integers equipped with the normalized Haar measure. We split the base-$b$ expansion of the integer $r$ into so-called ``blocks'', and we consider the asymptotic behaviour of $\mu^{(r)}$ as the number of blocks goes to infinity. We show that, up to renormalization, $\mu^{(r)}$ converges to the standard normal law as the number of blocks of $r$ grows to infinity. We provide an estimate of the speed of convergence. The proof relies, in particular, on a $\phi$-mixing process defined on the $b$-adic integers.

\textbf{Résumé.} On prouve un  Théorème Central Limite pour des mesures de probabilités définies grâce à la variation de la somme des chiffres en base $b\ge 2$. Pour $r\ge 0$ et $d\in\ZZ$, on considère $\mu^{(r)}(d)$, la densité des entiers $n\in\NN$ pour lesquels la somme des chiffres augmente de $d$ quand on ajoute $r$ à $n$. On donne une interprétation probabiliste de $\mu^{(r)}$ sur l'espace de probabilités donné par le groupe des entiers $b$-adiques muni de la mesure de Haar renormalisée. On décompose l'écriture en base $b$ d'un entier $r$ en ce que l'on appelle des ``blocs'', et nous considérons le comportement asymptotique de $\mu^{(r)}$ quand le nombre de blocs tend vers l'infini. On montre qu'à renormalisation près, $\mu^{(r)}$ converge vers une loi normale centrée réduite quand le nombre de blocs de $r$ tend vers l'infini. Nous fournissons une estimation de la vitesse de convergence. La preuve repose, entre autre, sur un processus $\phi$-mélangeant défini sur les entiers $b$-adiques.

\vspace{5mm}

\textbf{Keywords.} Sum of digits, Central Limit Theorem, $b$-adic odometer, $\phi$-mixing.

\vspace{5mm}

\textbf{MSC (2020).} 11A63, 37A44 and 60F05.

\section{Introduction}
Throughout this article, \emph{integers} mean elements of the set $\NN:=\{0,1,2, \cdots\}$, and $b$ denotes a fixed integer, $b\ge 2$.

For an integer $n$, we consider the associated sequence of \emph{digits} $(n_k)\in\{0,\cdots, b-1\}^{\NN}$, finitely many of them being strictly positive, such that
\[n=\sum_{k\ge 0}n_kb^k.\] 
For $n\neq 0$ and $\ell:=\max\{k: n_k\neq 0\}$, we introduce the notation $\overline{n_{\ell}\cdots n_0}:=n$, which generalises the usual way we write numbers in base $10$, and which we refer to as the (\emph{base $b$}) \emph{expansion} of $n$. By convention, we set $\overline{0}:=0$. 
Then, we define the \emph{sum-of-digits} function (in base $b$) as 
\[s(n):=\sum_{k\ge 0} n_k.\]

A central object in our paper is the variation of the sum of digits when we add a fixed integer $r$ to $n$: for $r,n\in\NN$, we set
\begin{align}\label{refDeltar}
    \Delta^{(r)}(n)&:=s(n+r)-s(n).
\end{align}
An interesting feature of $\Delta^{(r)}$ is that it gives the number of carries created during the addition $n+r$ in base $b$. To be more precise, if $c$ is the number of carries then
\begin{align}\label{lien_delta_retenue_entier}
    \Delta^{(r)}(n)&=s(r)-c (b-1).
\end{align}
Bésineau \cite{JB} proves that, for every $d\in\ZZ$, the following asymptotic density exists
\[\mu^{(r)}(d):=\lim\limits_{N\rightarrow +\infty}\frac{1}{N}\Big|\Big\{n<N: \Delta^{(r)}(n)=d\Big\}\Big|\]
and he studies these asymptotic densities through their correlation function. Actually, since $\sum_{d\in\ZZ} \mu^{(r)}(d)=1$, the function $\mu^{(r)}$ defines a probability measure on $\ZZ$.
 
Morgenbesser and Spiegelhofer \cite{JFMLS} show an amazing property: the measure $\mu^{(r)}$ remains the same if we reverse the order of the digits in the expansion of $r$. They call it the \emph{reverse property}.

In the particular case $b=2$, Emme and Prikhod'ko \cite{JEAP} show that the variance of $\mu^{(r)}$ is bounded from above and below by a constant multiplied by the number of blocks of $1$'s in the binary expansion of $r$. In Section \ref{Variance of Delta}, we extend this result to each $b\ge 2$.

Also in the binary case, Emme and Hubert \cite{JEPH1} show that, for almost every sequence of integers $(r_n)_{n\in\NN}$ written in binary and defined via a balanced Bernoulli process, the sequence of measures $(\mu^{(r_n)})_{n\in\NN}$,  after renormalization, converges in distribution to the standard normal law. The proof is done by computing all the moments of $\mu^{(r_n)}$ and by showing that, after renormalization,  they converge to the moments of the standard normal law. In our paper, we prove a more accurate and more general Central Limit Theorem (CLT).

To do so, we study the variations of the sum of digits in the context of an appropriate probability space. We consider the compact additive group $(\XX,+)$ of \emph{$b$-adic integers}. The space $\XX$ is endowed with the Borel $\sigma$-algebra and its normalized Haar measure $\PP$. 

We extend $\Delta^{(r)}$ almost everywhere on $\XX$ and show in Section \ref{Odometer and sum-of-digits function} (Proposition \ref{prop_moment}) that, for every $d\in\ZZ$ 
\[\mu^{(r)}(d)=\PP\left(\{x\in \XX: \Delta^{(r)}(x)=d \}\right).\]

To state our main result, we have to define the notion of \emph{blocks} in the base $b$ expansion of an integer $r$.
\begin{definition}\label{defblock}
   A \emph{block} in the expansion $\overline{r_{\ell}\cdots r_0}$ of an integer $r\in\NN$ is defined as follows: it is either
   \begin{enumerate}
       \item a maximal sequence of consecutive digits equal to $0$ (``block of $0$'s'') or
       \item a maximal sequence of consecutive digits equal to $b-1$ (``block of $(b-1)$'s'')  or
       \item when $b\ge 3$, a digit between $1$ and $b-2$ (``single-digit block'').
   \end{enumerate}
   We also define the quantity $\rho(r)$ as the number of blocks in the base $b$ expansion of $r$. 
\end{definition}
\begin{figure}[H]
    \centering
    \includegraphics[scale=0.35]{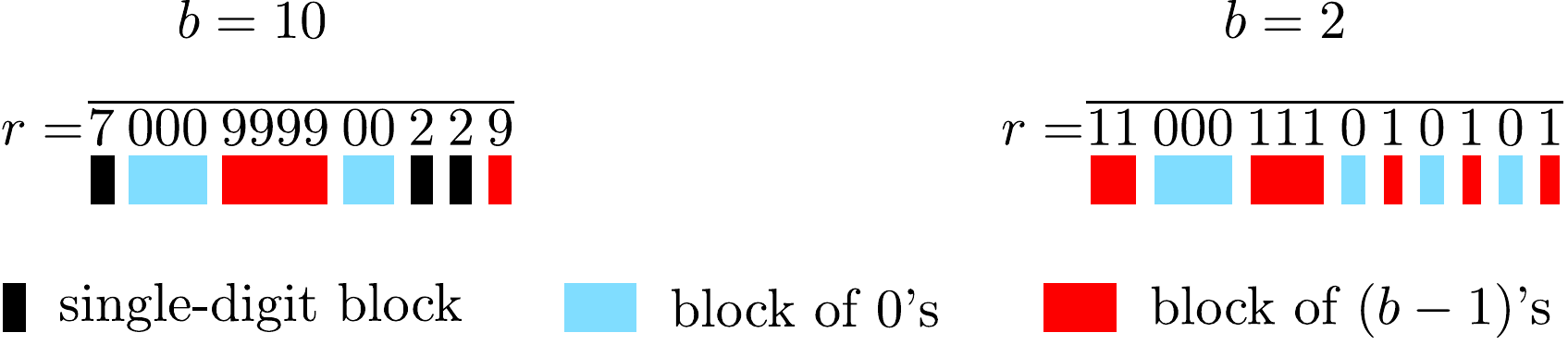}
    \caption{Examples of the decomposition in blocks in decimal and binary bases.  On the left-hand side, $\rho(r)=7$. On the right-hand side, $\rho(r)=9$.}
    \label{blocktranslation}
\end{figure}

We specify that a block of $0$'s or of $(b-1)$'s of length $1$ is not considered, in this paper, as a single-digit block. % For visual reason, we will put colors depending on the type of block we have. The blocks of first type will be blue blocks, the second red blocks and the last black blocks.
%For instance
 %   \begin{figure}[H]
  %      \centering
   %     \includegraphics[scale=0.35]{}
    %    \caption{Example of the coding in blocks in decimal base.}
     %   \label{blocktranslation}
    %\end{figure}
%We observe that in Figure \ref{blocktranslation}, $\lambda(r)=7$.

The following theorem, which generalizes Emme and Prikhod'ko's result, states that the number of blocks $\rho(r)$ controls the variance of $\mu^{(r)}$.
\begin{theo}\label{encadrement_de_la_variance}
For every integer $r\ge 1$ 
\[\frac{b}{4}\rho(r)\le \Var(\mu^{(r)})\le 2b^2 \rho(r).\]
\end{theo}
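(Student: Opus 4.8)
The plan is to work in the probabilistic framework just introduced, representing $\Delta^{(r)}$ as a function on the $b$-adic integers $\XX$ with respect to the Haar measure $\PP$, so that $\Var(\mu^{(r)})$ becomes the variance of the random variable $\Delta^{(r)}$. The key idea, suggested by equation~\eqref{lien_delta_retenue_entier}, is that $\Delta^{(r)} = s(r) - c(b-1)$, where $c$ is the number of carries; since $s(r)$ is a constant, we have $\Var(\mu^{(r)}) = (b-1)^2 \Var(c)$, and it suffices to control the variance of the number of carries generated during the addition $x + r$ as $x$ ranges over $\XX$. First I would write $c = \sum_k C_k$, where $C_k$ is the indicator that a carry propagates out of position $k$; each $C_k$ is a $\{0,1\}$-valued random variable on $\XX$, so $\Var(c) = \sum_k \Var(C_k) + \sum_{k\neq k'} \mathrm{Cov}(C_k, C_{k'})$.

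The decisive structural observation is that carries can only be generated at, and propagate through, positions governed by the block decomposition of $r$ in Definition~\ref{defblock}. Within a block of $0$'s, no digit of $r$ is added, so carries merely pass through or die out; within a block of $(b-1)$'s, adding the digit $b-1$ together with an incoming carry forces a carry out with high probability, so these blocks are where carries are reliably created; single-digit blocks behave intermediately. I would therefore group the carry indicators block by block and argue that the total number of carries is, up to a bounded factor, comparable to the number of blocks $\rho(r)$. Concretely, for the lower bound I would exhibit, for each block, a set of configurations of the low-order digits of $x$ of probability bounded below by a constant (depending only on $b$) on which the carry behaviour at that block contributes a genuine, essentially independent fluctuation to $c$, yielding $\Var(c) \ge \tfrac{c_1}{(b-1)^2}\,\rho(r)$ after absorbing constants, hence the stated $\tfrac{b}{4}\rho(r)$ lower bound. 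For the upper bound I would bound each $\Var(C_k)\le 1/4$ and control the covariances using the fact that the dependence between carry events decays across blocks.

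The cleanest way to make both the lower bound and the covariance control rigorous is to exploit the $\phi$-mixing structure alluded to in the abstract: the carry process, read off the digits of $x$ in order of increasing position, is a Markov-type process (the carry at position $k$ depends on the incoming carry and on $x_k$), so the indicators $C_k$ form a $\phi$-mixing sequence with geometrically decaying mixing coefficients. This lets me sum the covariances $\sum_{k\neq k'}\mathrm{Cov}(C_k,C_{k'})$ against a summable mixing bound and conclude that $\Var(c)$ is bounded above by a constant times the number of positions where carries are ``active'', which is itself $O(\rho(r))$; this gives the $2b^2\rho(r)$ upper bound after restoring the factor $(b-1)^2 \le b^2$. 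For the lower bound the same mixing decay is what guarantees that the per-block fluctuations do not cancel against each other, so that the block contributions add up rather than destructively interfere.

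The main obstacle I anticipate is the lower bound: upper-bounding a variance via a crude per-term bound plus summable covariances is routine, but showing $\Var(c)$ does not \emph{collapse} requires producing genuine, quantitatively non-degenerate fluctuations attached to each block and proving they are sufficiently decorrelated to sum up. The subtle point is that a long block of $0$'s or of $(b-1)$'s is a single block but spans many digit positions, so I must argue that the relevant randomness is concentrated at the block \emph{boundaries} (where a carry decision is genuinely made) rather than naively summing one unit of variance per digit; correctly accounting for boundary effects, and handling the interaction of adjacent blocks (a carry exiting one block being fed into the next), is where the careful work will lie, and where the precise constants $\tfrac{b}{4}$ and $2b^2$ must be tracked.
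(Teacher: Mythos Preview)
Your approach is genuinely different from the paper's, and it is worth seeing where it runs into trouble.

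\textbf{What the paper actually does.} The paper never touches the carry indicators $C_k$ or any mixing argument to prove this theorem. Instead it derives a one-step recursion for the variance (Lemma~\ref{lemme_relat_rec_var}):
\[
\Var(\mu^{(b\widetilde r + r_0)}) = \tfrac{b-r_0}{b}\Var(\mu^{(\widetilde r)}) + \tfrac{r_0}{b}\Var(\mu^{(\widetilde r + 1)}) + r_0(b-r_0),
\]
together with the comparison $|\Var(\mu^{(r+1)})-\Var(\mu^{(r)})|\le b$ (Lemma~\ref{comparaison}) and exact formulas for the one-block base cases. Both bounds are then obtained by induction on the number of non-zero blocks $\lambda(r)$, with a somewhat delicate case analysis for the lower bound. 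The $\phi$-mixing you mention from the abstract is introduced \emph{after} Theorem~\ref{encadrement_de_la_variance} is already proved, at the block level (the variables $X_i^{(r)}$), and is used only for the CLT.

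\textbf{Where your plan has real gaps.} First, the digit-level decomposition $c=\sum_k C_k$ does not localise to $O(\rho(r))$ terms: carries can occur at every position of a long block of $(b-1)$'s and can propagate past the last digit of $r$, so the index set is neither finite nor of size $O(\rho(r))$. A standard $\phi$-mixing covariance bound of the form $|\mathrm{Cov}(C_j,C_k)|\le 2\phi(|j-k|)$ then sums to a constant times the \emph{number of digit positions}, not the number of blocks, which is far too weak. What is true is that $\sum_k \Var(C_k)=O(\rho(r))$ because $\Var(C_k)$ decays geometrically \emph{within} each long block, but exploiting this forces you to group terms block-by-block, i.e.\ to pass to something like the paper's $X_i^{(r)}$ rather than the raw $C_k$. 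Second, and more seriously, the lower bound cannot come from mixing alone: $\phi$-mixing only gives $|\sum_{i\ne j}\mathrm{Cov}|\le C\cdot\lambda(r)$ for some constant $C$ depending on the mixing rate, and you need this constant to be strictly smaller than the per-block variance lower bound to prevent cancellation. Nothing in your outline controls that comparison of constants, and ``exhibit configurations'' does not address it. This is exactly why the paper abandons any covariance accounting for the lower bound and instead runs a direct induction through the recursion, where the additive term $r_0(b-r_0)$ furnishes the gain at each step.
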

Now,  we need to introduce, for an integer $r\ge 1$, the standard deviation $\sigma_r:=\sqrt{\Var(\mu^{(r)}})>0$ and the renormalized measure $\widetilde{\mu}^{(r)}$ which is the measure on $\RR$ concentrated on the points of the form $\frac{d}{\sigma_r}$ ($d\in\ZZ$) and which satisfies
\[\forall d\in\ZZ, \hspace{5mm} \widetilde{\mu}^{(r)}\hspace{-1mm}\left(\frac{d}{\sigma_r}\right):=\mu^{(r)}(d).\]
Our main result states that, for an integer $r\ge 1$, the renormalized measure $\widetilde{\mu}^{(r)}$ converges in distribution to the standard normal law as the number of blocks tends to infinity. 
\begin{theo}\label{TCL}
We have the convergence
\[\widetilde{\mu}^{(r)}\tend{\rho(r)}{+\infty}{d}{\mathcal{N}(0,1)}.\]
\end{theo}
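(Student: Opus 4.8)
The plan is to reduce the statement to a central limit theorem for the number of carries produced by the addition of $r$, and then to exploit the fact that these carries, read block by block, are governed by a uniformly $\phi$-mixing Markov chain. I would first rewrite $\Delta^{(r)}$ in terms of the carry process. Under $\PP$ the digits $(x_k)_{k\ge 0}$ of $x\in\XX$ are i.i.d.\ uniform on $\{0,\dots,b-1\}$, and the sequence of incoming carries $(\varepsilon_k)_{k\ge 0}$ is defined by $\varepsilon_0=0$ and $\varepsilon_{k+1}=\mathbf{1}\{x_k+r_k+\varepsilon_k\ge b\}$. Telescoping the per-digit variation $r_k+\varepsilon_k-b\,\varepsilon_{k+1}$ recovers \eqref{lien_delta_retenue_entier} in the form $\Delta^{(r)}=s(r)-(b-1)C$, where $C:=\sum_{k\ge 1}\varepsilon_k$ is the total number of carries. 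Writing $p_k:=\PP(\varepsilon_k=1)$, the recursion $b\,p_{k+1}=r_k+p_k$ forces $\EE[\Delta^{(r)}]=0$, so $\Delta^{(r)}=-(b-1)(C-\EE C)$. Since $\widetilde{\mu}^{(r)}$ is by construction the law of $\Delta^{(r)}/\sigma_r$, it is centred with variance $1$, and the theorem amounts to showing that $(C-\EE C)/\sqrt{\Var C}$ converges to $\mathcal N(0,1)$, where $\Var C=\sigma_r^2/(b-1)^2\asymp\rho(r)$ by Theorem \ref{encadrement_de_la_variance}.

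The identity $\PP(\varepsilon_{k+1}=1\mid\varepsilon_k)=(r_k+\varepsilon_k)/b$ shows that $(\varepsilon_k)$ is a time-inhomogeneous two-state Markov chain; it is however not uniformly mixing at the level of individual digits, since inside a block of $(b-1)$'s a carry already present is transmitted with probability $1$. This is exactly why I would pass to the block decomposition of Definition \ref{defblock} and track the carries only at the block boundaries: let $\eta_j$ denote the carry entering the $j$-th block. Then $(\eta_j)_{1\le j\le \rho(r)}$ is again a two-state Markov chain, and its one-step kernels satisfy a Doeblin minorization with a constant independent of $r$ and of the block lengths. Indeed, after a block of $0$'s the outgoing carry equals $0$ with probability at least $1-b^{-1}$ whatever the incoming carry; after a block of $(b-1)$'s it equals $1$ with probability at least $1-b^{-1}$; and after a single-digit block the two conditional laws overlap with mass at least $1-b^{-1}$. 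Consequently the chain regenerates at each step with probability at least $1-b^{-1}$, so it is $\phi$-mixing with $\phi(n)\le b^{-n}$. This is the $\phi$-mixing process on the $b$-adic integers announced in the abstract.

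Next I would write $C=\sum_{j} C_j$, where $C_j$ is the number of carries at the positions of the $j$-th block (the carries propagating past the top digit of $r$ into the infinite run of zeros above contribute a further, geometrically distributed, $O(1)$ correction). Each $C_j$ is a deterministic function of $\eta_j$ and of the digits of $x$ inside that block, the latter being independent across blocks; hence $(C_j-\EE C_j)_j$ is a functional of the regenerating chain $(\eta_j)$ enriched with fresh per-block noise, and a short computation shows it inherits the geometric $\phi$-mixing of $(\eta_j)$. The key point is that, although $C_j$ may be large for long blocks, it always has variance $O(1)$ and uniformly light (geometric) tails: in a long block of $(b-1)$'s the carry, once created, is transmitted deterministically, so the only randomness is the geometric waiting time before it appears; symmetrically, a carry entering a long block of $0$'s survives for a geometric number of steps before dying. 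Thus the centred contributions have uniformly bounded moments of every order, and no single block dominates the variance.

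Finally I would invoke a Berry–Esseen-type central limit theorem for non-stationary $\phi$-mixing triangular arrays with geometrically decaying mixing coefficients and uniformly bounded $(2+\delta)$-moments, the row of length $\rho(r)$ being the sequence $(C_j)_{1\le j\le\rho(r)}$. The lower bound $\Var C\gtrsim\rho(r)$ coming from Theorem \ref{encadrement_de_la_variance} guarantees non-degeneracy and the Lyapunov/Lindeberg condition, since each term contributes $O(1)$ while the total variance is of order $\rho(r)$, and it yields convergence of $(C-\EE C)/\sqrt{\Var C}$ to $\mathcal N(0,1)$ together with an explicit rate of order $\rho(r)^{-1/2}$ up to logarithmic factors, hence the stated convergence of $\widetilde{\mu}^{(r)}$. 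I expect the main obstacle to lie precisely in this last step: verifying the hypotheses of a quantitative $\phi$-mixing CLT in the non-stationary, triangular-array setting, controlling the long blocks (where $C_j$ is large but almost deterministic), and propagating the uniform moment and variance estimates needed for the Berry–Esseen bound.
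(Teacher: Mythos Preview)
Your approach is correct and parallels the paper's at the structural level---both decompose block by block, establish geometric $\phi$-mixing for the resulting finite array, and invoke a quantitative CLT for $\phi$-mixing sequences---but the decomposition and the mixing argument differ in their details. The paper works directly with $\Delta^{(r)}$ rather than with the carry count: it sets $X_i^{(r)}:=\Delta^{(r[i]-r[i-1])}\circ T^{r[i-1]}$ for the $\lambda(r)$ \emph{non-zero} blocks only (so that $\Delta^{(r)}=\sum_i X_i^{(r)}$ by~\eqref{decomp_sum}), and proves $\phi$-mixing via a ``buffer strip'' argument (Lemma~\ref{lemme_de_melange}): between $B_p$ and $B_{p+k}$ there are at least $k/2-1$ positions $j$ with $r_j\ne b-1$, and if any of the corresponding $x_j$ vanishes the carry cannot cross, giving $\phi(k)\le 2\bigl((b-1)/b\bigr)^{k/2-1}$. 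Your route through the boundary-carry chain $(\eta_j)$ indexed by \emph{all} $\rho(r)$ blocks and its uniform Doeblin minorization is arguably cleaner and yields a faster mixing rate, at the cost of a separate verification that the per-block carry counts $C_j$ have uniformly bounded moments (the paper obtains the analogous bound for its $X_i^{(r)}$ directly in Lemma~\ref{moments bornées de X_i}). For the final step, the obstacle you anticipate is resolved in the paper by appealing to a specific result of Sunklodas (Theorem~\ref{sunklodas}), tailored to exactly this non-stationary finite-row $\phi$-mixing setting; combined with Theorem~\ref{encadrement_de_la_variance} it gives the rate $O(\rho(r)^{-1/2})$ for smooth test functions, and Theorem~\ref{TCL} then follows as an immediate corollary of Theorem~\ref{theoprincipal}.
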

Theorem~\ref{TCL} can be seen as a direct consequence of the following theorem, which furthermore provides an estimation of the speed of convergence.
\begin{theo}\label{theoprincipal}
Let $h:\RR\rightarrow\RR$ be a thrice differentiable function with $||h'''||_{\infty}<\infty$. Let $Z$ be a random variable following $\widetilde{\mu}^{(r)}$ and $Y$ a standard normal random variable. Then
\begin{align}\label{vitesse_fonction_reguliere}
\bigg|\EE(h(Z))-\EE(h(Y))\bigg|&= \underset{\rho(r)\rightarrow\infty}{O}\left(\frac{1}{\sqrt{\rho(r)}}\right).
\end{align}
Furthermore, if we denote by $F_r$ (respectively $F$) the cumulative distribution function of $\widetilde{\mu}^{(r)}$ (respectively $\mathcal{N}(0,1)$), then there exists $\widetilde{K}>0$ such that for every integer $r\ge 1$
\begin{align}\label{vitesse indicatrice}
    \sup_{t\in\RR}\left|F_r(t)-F(t)\right|&\le \frac{\widetilde{K}}{\rho(r)^{\frac{1}{8}}}.
\end{align}
\end{theo}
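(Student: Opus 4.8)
The plan is to realise $\Delta^{(r)}$ on the $b$-adic integers as a sum of weakly dependent contributions, one per block of $r$, and then run a Lindeberg-type replacement argument tuned to the third-derivative hypothesis. Concretely, under $\PP$ the digits $(x_k)$ of $x\in\XX$ are i.i.d.\ uniform on $\{0,\dots,b-1\}$, and by \eqref{lien_delta_retenue_entier} we have $\Delta^{(r)}(x)=s(r)-(b-1)\,C(x)$, where $C(x)$ counts the carries produced in the addition $x+r$. The carries form a $\{0,1\}$-valued sequence $(\varepsilon_k)$ obeying a recursion in which $\varepsilon_k$ depends only on $x_k$, $r_k$ and the incoming carry $\varepsilon_{k-1}$. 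I would segment the positions $k$ according to the blocks of $r$ given by Definition~\ref{defblock}, and let $X_i$ be the number of carries occurring inside block $i$. Writing $Z=(\Delta^{(r)}-\EE\Delta^{(r)})/\sigma_r=\sum_{i=1}^{\rho(r)}W_i$ with $W_i:=-(b-1)(X_i-\EE X_i)/\sigma_r$, the whole problem becomes a quantitative CLT for the triangular array $(W_i)$.

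The first key step is to show that the block-indexed process is $\phi$-mixing with a geometric rate that is uniform in $r$. The point is that the carry chain forgets its incoming value at an exponential rate: whenever the carry meets a digit of $r$ that is not $b-1$, there is a positive probability (bounded below independently of $r$) that the outgoing carry no longer depends on the incoming one, so the conditional law of the carry entering block $i+n$ given the configuration up to block $i$ converges exponentially fast to its stationary value. Blocks of $(b-1)$'s transmit carries deterministically, but a single intervening non-$(b-1)$ digit---present by the very definition of a block boundary---restores the contraction. This yields $\phi(n)\le C\lambda^n$ for some $\lambda\in(0,1)$, whence $\sum_n \phi(n)<\infty$.

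With mixing in hand I would compare $\EE(h(Z))$ with $\EE(h(Y))$ by the Lindeberg telescoping method. Introduce independent centred Gaussians $(G_i)$ with $\Var(G_i)=\Var(W_i)$ and replace the $W_i$ by the $G_i$ one at a time; a third-order Taylor expansion bounds each replacement error by a multiple of $\|h'''\|_\infty\,\EE|W_i|^3$ together with correction terms coming from the dependence of $W_i$ on the remaining coordinates, the latter controlled by the mixing coefficients. Each block creates a bounded number of carries, so $|W_i|\le K/\sigma_r$ uniformly and $\sum_i \EE|W_i|^3\lesssim \rho(r)\,\sigma_r^{-3}$; since Theorem~\ref{encadrement_de_la_variance} gives $\sigma_r^2\asymp\rho(r)$, this sum is $O(\rho(r)^{-1/2})$, and the mixing corrections sum to the same order because $\sum_n\phi(n)<\infty$. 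Replacing the Gaussian sum $\sum_i G_i$ by $Y$ costs only the variance defect $\bigl|\sum_i\Var(W_i)-1\bigr|=O(\rho(r)^{-1})$, which is of lower order. This establishes \eqref{vitesse_fonction_reguliere}.

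The main obstacle I anticipate is making the mixing estimate and the dependence corrections in the Lindeberg sum uniform in $r$, since long blocks of $(b-1)$'s propagate carries without loss and the correct bookkeeping of how the mixing coefficient enters each swap is delicate. Granting \eqref{vitesse_fonction_reguliere}, the Kolmogorov bound \eqref{vitesse indicatrice} follows by smoothing: convolve $\mathbf{1}_{(-\infty,t]}$ with a mollifier at scale $\delta$ to obtain $h_{t,\delta}$ with $\|h_{t,\delta}'''\|_\infty\lesssim\delta^{-3}$, so that \eqref{vitesse_fonction_reguliere} yields $\bigl|\EE h_{t,\delta}(Z)-\EE h_{t,\delta}(Y)\bigr|\lesssim \delta^{-3}\rho(r)^{-1/2}$, while passing from $h_{t,\delta}$ back to the indicator costs $O(\delta)$ using the Lipschitz continuity of $F$ (the lattice spacing $1/\sigma_r\lesssim\rho(r)^{-1/2}$ being negligible). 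Optimising $\delta^{-3}\rho(r)^{-1/2}+\delta$ over $\delta$ gives $\delta\asymp\rho(r)^{-1/8}$ and the claimed rate $\widetilde{K}\,\rho(r)^{-1/8}$.
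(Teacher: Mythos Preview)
Your overall architecture---decompose $\Delta^{(r)}$ block by block, prove geometric $\phi$-mixing of the block process via a coupling/regeneration argument for the carry chain, feed this into a quantitative CLT, then smooth the indicator for the Kolmogorov bound---matches the paper's proof, and your derivation of \eqref{vitesse indicatrice} from \eqref{vitesse_fonction_reguliere} is essentially identical to the paper's. Where you diverge is in the CLT step itself: the paper invokes a black-box result of Sunklodas for $\phi$-mixing arrays (Theorem~\ref{sunklodas}), whereas you attempt a direct Lindeberg replacement.

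That attempt has a genuine gap. First, a minor point: a block of $(b-1)$'s of length $m$ can generate up to $m$ carries (once a carry appears inside such a block it propagates to the end), and a block of $0$'s can likewise transmit arbitrarily many; so your claim $|W_i|\le K/\sigma_r$ is false. What is true, and sufficient, is that the \emph{centred} block variables have uniformly bounded moments of every order (this is the paper's Lemma~\ref{moments bornées de X_i}). More seriously, your variance-defect estimate is wrong. If you replace the $W_i$ by independent Gaussians with $\Var(G_i)=\Var(W_i)$, then $\sum_i G_i$ has variance $\sum_i\Var(W_i)$, and
\[
\Bigl|\sum_i\Var(W_i)-1\Bigr|=\frac{2}{\sigma_r^2}\,\Bigl|\sum_{i<j}\mathrm{Cov}(X_i,X_j)\Bigr|.
\]
Adjacent block contributions are coupled through the outgoing carry, so neighbouring covariances $\mathrm{Cov}(X_i,X_{i+1})$ are of constant size; summing $\rho(r)$ of them and dividing by $\sigma_r^2\asymp\rho(r)$ leaves a quantity of order~$1$, not $O(\rho(r)^{-1})$. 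Hence the final comparison of $\sum_i G_i$ with $Y$ already costs $O(1)$ and the bound collapses. The dependence cannot be pushed into third-order ``correction terms'' while matching only marginal variances at second order; one must either build the covariance structure into the Gaussian surrogate (Bernstein blocking, Stein's method) or, as the paper does, quote a theorem that performs this bookkeeping for $\phi$-mixing sequences.
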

A result in the same spirit has recently been published by Spiegelhofer and Wallner \cite{LSMW}. In the case of the base $2$, they give a very accurate estimation of the measure $\mu^{(r)}(d)$ for every $d\in\ZZ$ 
\begin{align}\label{result SW}
    \mu^{(r)}(d)&=\frac{1}{\sigma_r\sqrt{2\pi}}e^{-\frac{d^2}{2\sigma_r^2}}+\underset{\rho(r)\rightarrow\infty}{O}\left(\rho(r)^{-1}(\log(\rho(r))^4\right).
\end{align}
Their result is proved using a combination of several techniques such as recurrence relations, cumulant generating functions, and integral representations. It seems possible but extremely difficult to generalize \eqref{result SW} to other bases. It also implies a CLT when $\rho(r)$ tends to infinity: using \eqref{result SW}, it is possible to show that for every real numbers $a<b$
\[\widetilde{\mu}^{(r)}([a,b])\tend{\rho(r)}{\infty}{}{\frac{1}{\sqrt{2\pi}}\int_{a}^b e^{\frac{-t^2}{2}}\mathrm{d}t}\]
with a speed of convergence of $\frac{\log^4(\rho(r))}{\rho(r)^{\frac{1}{2}}}$. However, it is not clear how we can get a speed of convergence of $F_r(t)$ to $F(t)$. On our side, we use a drastically different approach which applies directly in any base, relying on the concept of \emph{$\phi$-mixing} process and on a result from Sunklodas \cite{JKS}.

\subsection*{Roadmap}

Section \ref{Odometer and sum-of-digits function} is devoted to placing the study of the measures $\mu^{(r)}$ in the context of the odometer on the set $\XX$ of $b$-adic integers. We extend $\Delta^{(r)}$ almost everywhere on $\XX$ and we show that the convergence
\[\lim\limits_{N\to\infty}\frac{1}{N}\sum_{n<N}f(\Delta^{(r)}(n))=\int_{\XX} f(\Delta^{(r)}(x)) \mathrm{d}\PP(x)\]
(where $\PP$ is the normalized Haar measure on $\XX$) is satisfied for functions $f:\ZZ\rightarrow \CC$ of polynomial growth (Proposition \ref{prop_moment}) and, more generally, for functions $f$ such that $f\circ\Delta^{(r)}$ is integrable (Proposition \ref{prop_gene}). We deduce from Proposition \ref{prop_moment} that $\mu^{(r)}(d)=\PP(\{x\in \XX: \Delta^{(r)}(x)=d \})$ and that $\mu^{(r)}$ has finite moments. In particular, we show that $\mu^{(r)}$ is of zero-mean.

In Section \ref{Variance of Delta}, we focus on the second moment of $\mu^{(r)}$. First, we establish in Proposition~\ref{relat_rec_mesure} an inductive relation between the measures in the spirit of Bésineau's result \cite[p.13]{JB}. From that, we deduce an inductive relation on the variance of the measures (Lemma \ref{lemme_relat_rec_var}). 
Then, we prove the estimation of the variance stated in Theorem~\ref{encadrement_de_la_variance}.

In Section \ref{A phi-mixing process}, we build a finite sequence of random variables associated to the addition of some integer $r$ that will be used to prove Theorem~\ref{theoprincipal}. We estimate the $\phi$-mixing coefficients for this sequence.

The last section is devoted to the proof of Theorem~\ref{theoprincipal}. We show how we can apply a result from Sunklodas \cite[Theorem 1]{JKS} giving a speed of convergence in the CLT for $\phi$-mixing process.

\subsection*{Acknowledgment} 
We thank M. El Machkouri for fruithful discussions and references about the speed of convergence in CLT for mixing sequences.

\section{Odometer and sum-of-digits function}\label{Odometer and sum-of-digits function}
    \subsection{Unique ergodicity of the \texorpdfstring{$b$}{b}-adic odometer}
We define $\XX$ as the space of \emph{$b-$adic integers}, that is the space $\{0,\cdots,b-1\}^{\NN}$. Coordinates of a $b$-adic integer $x\in \XX$ are interpreted as digits in base  $b$: elements of $\XX$ can be viewed as ``generalized integers having possibly infinitely many non zero digits in base $b$''. To comply with the usual writing of numbers in base $10$, an element $x=(x_n)_{n\in\NN}\in \XX$ will be represented as a left-infinite sequence $(\cdots, x_1,x_0)$, $x_0$ being the units digit. The space $\XX$ is compact for the product topology. The set of integers $\NN$ can be identified with the subset of sequences with finite support. More precisely, using the inclusion function 
\begin{align*}
    i:\overline{n_{\ell}\cdots n_0} \in\NN \longmapsto (\cdots, 0,n_{\ell},\cdots, n_1,n_0) \in \XX
\end{align*}
we identify $\NN$ and $i(\NN)$. 

$\XX$ is equipped with an addition which extends the usual addition on $\NN$ and turns $\XX$ into an Abelian group. For $x=(\cdots,x_0)$ and $y=(\cdots,y_0)$ in $\XX$, $(x+y)$ is determined recursively by the following process, in which we generate a sequence of carries $(c_{\ell})_{\ell\ge 0}\in\{0,1\}^{\NN}$.
    \begin{itemize}
        \item Initialisation: \\        
            \hspace{5mm} if $x_0+y_0<b$ then we set $(x+y)_0:=x_0+y_0$ and $c_0:=0$, \\
            \hspace{5mm} else we set $(x+y)_0:=x_0+y_0-b$ and $c_0:=1$.
        \item Induction step: once, for some $\ell\ge 1$, we have computed  $(x+y)_i$ and $c_i$ for $i=0, \cdots, \ell-1$, \\
            \hspace{5mm}  if $x_{\ell}+y_{\ell}+c_{\ell-1}<b$ then we set $(x+y)_{\ell}:=x_{\ell}+y_{\ell}+c_{\ell-1}$  and $c_{\ell}:=0$, \\
                 \hspace{5mm} else we set $(x+y)_{\ell}:=x_{\ell}+y_{\ell}+c_{\ell-1}-b$ and $c_{\ell}:=1$.
    \end{itemize}
Now, since $1$ belongs to $\NN\subset \XX$, we can consider the application $T:\XX\rightarrow \XX$
\begin{align*}
    T:x\longmapsto T(x):=x+1,
\end{align*}
which is usually refered to as the \emph{$b$-adic odometer}.
It is well-known that $T$ is a homeomorphism  and so $(\XX,T)$ is a topological dynamical system $\cite{NPF}$. For $\ell\ge 0$ and for integers $r_{\ell},\cdots, r_0\in\{0,\cdots, b-1\}$, we define the \emph{cylinder} $C_{r_{\ell}\cdots r_0}$ as the set of sequences $x$ such that $x_i=r_i$ for $ i=0,\cdots, \ell$. We observe that the image by $T$ of a cylinder is another cylinder~: for integers $r_{\ell},\cdots, r_0\in\{0,\cdots, b-1\}$, if there exists a minimal index $i\in\{0,\cdots,\ell\}$ such that $r_i\neq b-1$ then $TC_{r_{\ell}\cdots r_0}=C_{r_{\ell}\cdots r_{i+1}(1+r_i)0^i}$, otherwise $TC_{r_{\ell}\cdots r_0}=C_{0^{\ell+1}}$. Also, we define the \emph{Rokhlin tower of order $\ell\ge 0$} as the family  
\begin{align*}
    \left(C_{0^{\ell+1}}, TC_{0^{\ell+1}}, \cdots, T^{b^{\ell+1}-1}C_{0^{\ell+1}}\right)
\end{align*}
where $(T^jC_{0^{\ell+1}})_{0\le j\le b^{\ell+1}-1}$ form a partition of $\XX$. We commonly represent this family as a tower as shown in Figure \ref{rok1}.
    \begin{figure}[H]
        \centering
        \includegraphics[scale=0.28]{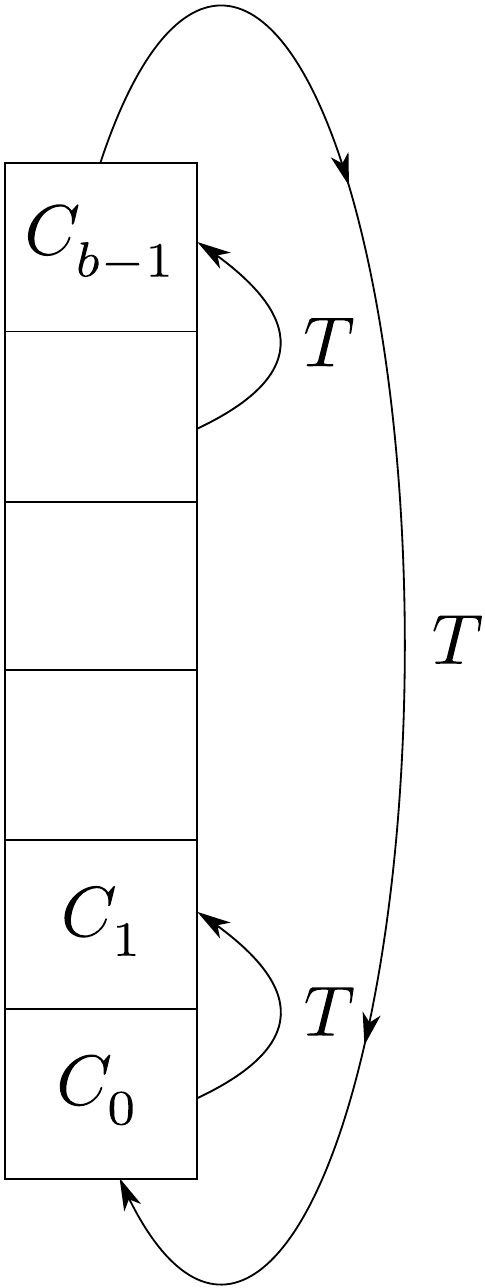}
        \caption{Behavior of $T$ on the Rokhlin tower of order $0$.}
        \label{rok1}
    \end{figure}
It is classical (see the survey \cite{TD}) that the sequence of towers can be constructed with the so-called \emph{Cut-and-Stack} inductive process, as illustrated in Figure \ref{rok2}.

\begin{figure}[H]
    \centering
    \includegraphics[scale=0.34]{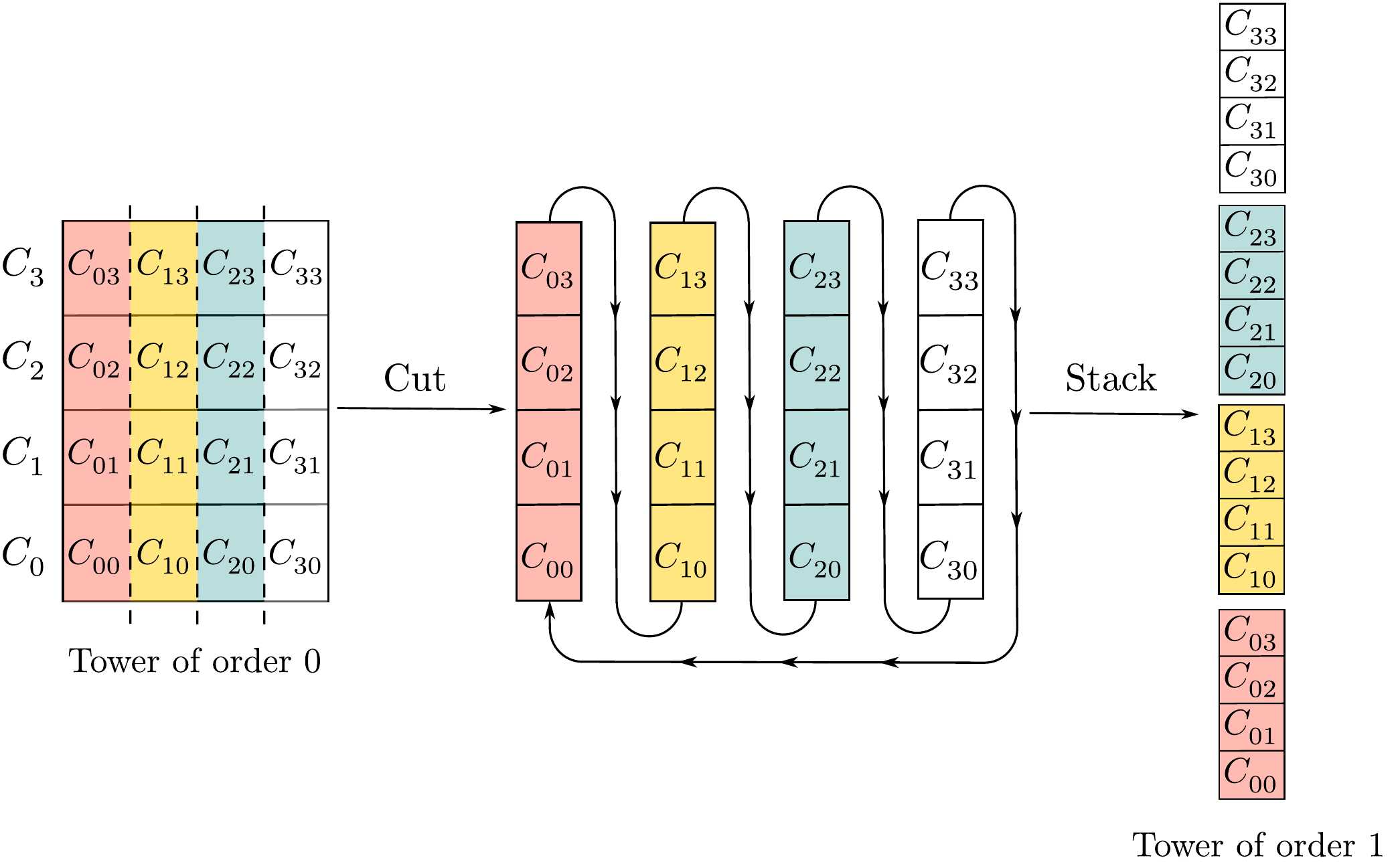}
    \caption{How to construct the tower of order $1$ of $T$ from the tower of order $0$ in base $b=4$.}
    \label{rok2}
\end{figure}

By looking at the behavior of $T$ on these towers, one can show that if $\PP$ is a $T-$invariant probability measure on $(\XX,T)$, then $\PP$ gives the same measure to each level in a given tower: for every $\ell\ge 0$ and $r_0,\cdots,r_{\ell}\in\{0,\cdots, b-1\}$
\begin{align*}
    \PP\left(C_{r_{\ell}...r_0}\right)=\frac{1}{b^{\ell+1}}.
\end{align*}
Since the cylinders generate the Borel $\sigma$-algebra, $\PP$ is uniquely determined by these values on the cylinders, hence $(\XX,T)$ is a uniquely ergodic dynamical system.
We observe that choosing $x$ in $\XX$ according to the unique $T$-invariant law $\PP$ means choosing its digits independently according to the uniform law on $\{0,\cdots, b-1\}$. We also note that $\PP$ is the normalized Haar measure on $\XX$.

For $x$ in $\XX$, we define the sequence of empirical probability measures along the (beginning of the) orbit of $x$: for every $N\ge 1$, we set
\[\epsilon_N(x):=\frac{1}{N}\sum_{0\le n<N}\delta_{T^nx}\] 
(where $\delta_y$ denotes the Dirac measure on $y\in\XX$).\\
Since the space of probability measures on $\XX$ is compact for the weak-$*$ topology, every subsequential limit of $(\epsilon_N(x))$ is a $T$-invariant probability measure. By the uniqueness of the $T$-invariant probability measure, for every $x\in \XX$ we have $\epsilon_N(x)\rightarrow \PP$. In other words, we have the convergence 
\begin{equation}\label{uniq_ergo+birk1}
        \forall x\in \XX, \hspace{2mm} \forall f\in\mathcal{C}(\XX), \hspace{5mm} \frac{1}{N}\sum_{0\le n< N}f(T^nx) \tend{N}{+\infty}{}{\int_{\XX} f \mathrm{d}\PP}.
\end{equation}
We will be interested here in the special case $x=0$ because $\NN=\{T^n0:n\in\NN\}$. Then \eqref{uniq_ergo+birk1} becomes
\begin{equation}\label{uniq_ergo+birk}
        \forall f\in\mathcal{C}(\XX), \hspace{5mm} \frac{1}{N}\sum_{0\le n< N}f(n) \tend{N}{+\infty}{}{\int_{\XX} f \mathrm{d}\PP}.
\end{equation}
Equation \eqref{uniq_ergo+birk} shows that, for a continuous function $f$, averaging $f$ over $\NN$ (for the natural density) amounts to averaging over $\XX$ (for $\PP$). The next section shows how this convergence can be extended to some non-continuous functions related to the sum-of-digits function.

    \subsection{Sum of digits on the odometer}\label{Sum of digits on the odometer}
For every integer $k$,  we define $s_k:\XX\rightarrow\ZZ$ as the sum of the first $(k+1)$ digits function, that is to say 
\begin{align*}
    s_k(x):=x_0+\cdots +x_k.    
\end{align*}
Let $r\in\NN$. We define the functions $\Delta^{(r)}_k:\XX\rightarrow\ZZ$ by 
\begin{align*}
    \Delta_k^{(r)}(x):=s_k(x+r)-s_k(x).
\end{align*}
The functions $\Delta_k^{(r)}$ are well-defined, continuous (and bounded) on $\XX$. By \eqref{uniq_ergo+birk}, we have 
\begin{align}\label{birkhoff_result_delta_k}
         \frac{1}{N}\sum_{n< N} \Delta_k^{(r)}(n)&=\frac{1}{N}\sum_{n< N} \Delta_k^{(r)}(T^n0)\tend{N}{+\infty}{}{\int_{\XX} \Delta_k^{(r)} \mathrm{d}\PP}.
\end{align}
Although the sum-of-digits function $s$ is not well defined on $\XX$, we can extend the function $\Delta^{(r)}$ defined in \eqref{refDeltar} on the set of $x\in\XX$ for which the number of different digits between $x$ and $x+r$ is finite. This subset contains the $b$-adic integers $x$ such that there exists an index $k\ge \max(\{\ell: r_{\ell}\neq 0\}$ such that $x_k\neq b-1$. So, except for a finite number of $b$-adic integers, we can define 
\[\Delta^{(r)}(x):=\lim\limits_{k\to\infty}\Delta^{(r)}_k(x).\]

\begin{lemme}
    Let $t\ge 1$. We have $\PP$-almost surely the following identity 
    \begin{align}\label{decomp_gene}
    \Delta^{(t)}&=\Delta^{(1)}+\Delta^{(1)}\circ T+\cdots+\Delta^{(1)}\circ T^{t-1}.
\end{align}
\end{lemme}
\begin{proof}
    For every integers $k$ and $u$, we have the decomposition formula 
    \begin{align}\label{decomp_sumk}
        \Delta_k^{(t+u)}&=\Delta_k^{(t)}+\Delta_k^{(u)}\circ T^t.
    \end{align}
    So, taking $\PP$-almost everywhere the limit when $k$ tends to infinity, we get
    \begin{align}\label{decomp_sum}
        \Delta^{(t+u)}&=\Delta^{(t)}+\Delta^{(u)}\circ T^t \hspace{2mm}  (\PP\mbox{-a-s.}).
    \end{align}
    An induction on $t$ gives \eqref{decomp_gene}.
\end{proof}

We observe that $\Delta^{(r)}$ also satisfies \eqref{lien_delta_retenue_entier}: for each $x\in \XX$ for which $\Delta^{(r)}(x)$ is well defined, we have
\begin{align}\label{lien_delta_retenue_odometre}
    \Delta^{(r)}(x)=s(r)-c (b-1),
\end{align}
where $c:=\sum_{\ell\ge 0} c_{\ell}<\infty$ is the total number of carries generated during the computation of $x+r$.\\
Unfortunately, $\Delta^{(r)}$ is not continuous like the functions $\Delta_k^{(r)}$, it is not even bounded on $\XX$, but we have the following result about functions with polynomial growth.

\begin{prop}\label{prop_moment}
Let $r\ge 1$ and $f:\ZZ\rightarrow\CC$. Assume that there exist  $\alpha\ge 1$ and $C$ in $\RR^*_+$  such that for every $n\in\ZZ$  
\begin{align}\label{hyp_prop_moment2}
    \lvert f(n) \rvert &\le C\lvert n \rvert^{\alpha} +\lvert f(0)\rvert.
\end{align}
Then $f\circ \Delta^{(r)}\in L^1\left(\PP\right)$ and we have the convergence
\begin{align*}
    \lim\limits_{N\to\infty}\frac{1}{N}\sum_{n<N}f(\Delta^{(r)}(n))&=\int_{\XX} f(\Delta^{(r)}(x)) \mathrm{d}\PP(x)\\
            &= \lim\limits_{k\to\infty} \int_{\XX} f(\Delta^{(r)}_k(x)) \mathrm{d}\PP(x).
\end{align*}
\end{prop}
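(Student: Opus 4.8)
The plan is to treat the two equalities separately, using the continuous approximations $\Delta^{(r)}_k$ as a bridge, and to extract from the carry description \eqref{lien_delta_retenue_odometre} the quantitative tail estimates that the lack of boundedness forces upon us. Write $\ell:=\max\{k:r_k\neq 0\}$ and let $c=c(x)$ be the total number of carries in the computation of $x+r$, so that $\Delta^{(r)}(x)=s(r)-(b-1)c$ by \eqref{lien_delta_retenue_odometre}. The first input I would establish is the exponential tail bound $\PP\bigl(c\ge \ell+1+p\bigr)\le b^{-p}$: beyond position $\ell$ the digits of $r$ vanish, so a carry can keep propagating only through a run of digits equal to $b-1$, an event of probability $b^{-p}$ for a run of length $p$ since the digits of $x$ are i.i.d.\ uniform under $\PP$. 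This shows that $c$, hence $\Delta^{(r)}$, has finite moments of every order, and together with the growth hypothesis \eqref{hyp_prop_moment2} it gives $f\circ\Delta^{(r)}\in L^1(\PP)$.

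For the second equality I would first record the identity, valid for $k\ge\ell$,
\[
\Delta^{(r)}_k(x)=s(r)-(b-1)\sum_{i=0}^{k-1}c_i-b\,c_k,
\]
obtained by summing $(x+r)_i=x_i+r_i+c_{i-1}-b\,c_i$ over $i\le k$. Since $c_k\le 1$ and $\sum_{i<k}c_i\le c$, this yields the $k$-uniform domination $|\Delta^{(r)}_k(x)|\le s(r)+b+(b-1)c=:E(x)$, and $|\Delta^{(r)}(x)|\le E(x)$ as well. As $\Delta^{(r)}_k\to\Delta^{(r)}$ pointwise off the finite set where $\Delta^{(r)}$ is undefined, the growth hypothesis turns $E$ into a $k$-uniform integrable envelope, $|f(\Delta^{(r)}_k)|\le C\,E^\alpha+|f(0)|\in L^1(\PP)$, and dominated convergence gives $\int_{\XX}f(\Delta^{(r)}_k)\,\mathrm{d}\PP\to\int_{\XX}f(\Delta^{(r)})\,\mathrm{d}\PP$.

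The core of the argument is the first equality. Writing $u_N:=\frac1N\sum_{n<N}f(\Delta^{(r)}(n))$, I would insert $\Delta^{(r)}_k$ and bound $\bigl|u_N-\int_{\XX}f(\Delta^{(r)})\,\mathrm{d}\PP\bigr|\le (\mathrm I)+(\mathrm{II})+(\mathrm{III})$, where $(\mathrm I)=\frac1N\sum_{n<N}\bigl|f(\Delta^{(r)}(n))-f(\Delta^{(r)}_k(n))\bigr|$, where $(\mathrm{II})=\bigl|\frac1N\sum_{n<N}f(\Delta^{(r)}_k(n))-\int_{\XX}f(\Delta^{(r)}_k)\,\mathrm{d}\PP\bigr|$, and where $(\mathrm{III})=\bigl|\int_{\XX}f(\Delta^{(r)}_k)\,\mathrm{d}\PP-\int_{\XX}f(\Delta^{(r)})\,\mathrm{d}\PP\bigr|$. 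Term $(\mathrm{II})$ vanishes as $N\to\infty$ for each fixed $k$ because $f\circ\Delta^{(r)}_k$ is continuous, so \eqref{uniq_ergo+birk} applies; term $(\mathrm{III})$ vanishes as $k\to\infty$ by the dominated convergence just described. The whole difficulty is concentrated in $(\mathrm I)$, and this is the step I expect to be the main obstacle: its integrand is an unbounded, genuinely discontinuous function of $x$, so \eqref{uniq_ergo+birk} cannot be applied to it directly.

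To control $(\mathrm I)$ I would use that, for $k\ge\ell$, the functions $\Delta^{(r)}$ and $\Delta^{(r)}_k$ coincide off the clopen set $B_k:=\{c_k=1\}$ (when $c_k=0$ no further carry occurs), and that $\PP(B_k)\le b^{-(k-\ell)}$. The growth hypothesis and the common envelope $E$ then give the domination of the integrand by $P(c)\mathbf{1}_{B_k}$, where $P(c):=2\bigl(C(s(r)+b+(b-1)c)^\alpha+|f(0)|\bigr)$ has polynomial growth in $c$. Splitting $P(c)=P\bigl(c^{(m)}\bigr)+\bigl(P(c)-P(c^{(m)})\bigr)$ with the truncated carry count $c^{(m)}:=\min(c,m)$ — which is continuous because its level sets $\{c=j\}$ for $j<m$ and $\{c\ge m\}$ are clopen (finite unions of cylinders) — the continuous piece $P(c^{(m)})\mathbf{1}_{B_k}$ obeys \eqref{uniq_ergo+birk}, giving $\limsup_N\frac1N\sum_{n<N}P(c^{(m)}(n))\mathbf{1}_{B_k}(n)\le P(m)\,\PP(B_k)\le P(m)b^{-(k-\ell)}$. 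For the remaining piece, dominated by $P(c)\mathbf{1}_{\{c>m\}}$, I would prove the crude but $N$-uniform density estimate $\frac1N\bigl|\{n<N:c(n)\ge\ell+1+p\}\bigr|\le b^{-p}+b^{\ell+1}/N$ by counting integers with a prescribed run $n_{\ell+1}=\cdots=n_{\ell+p}=b-1$; summing $P(j)$ against this over the $O(\log N)$ admissible values of $j$ yields $\limsup_N\frac1N\sum_{n<N}P(c(n))\mathbf{1}_{\{c(n)>m\}}\le\sum_{j>m}P(j)b^{-(j-\ell-1)}=:R(m)$, the boundary contribution being polylogarithmic in $N$ and hence negligible. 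Thus $\limsup_N(\mathrm I)\le P(m)b^{-(k-\ell)}+R(m)$ for every $m$. Since the left-hand side $\limsup_N\bigl|u_N-\int_{\XX}f(\Delta^{(r)})\,\mathrm{d}\PP\bigr|$ does not depend on $k$, letting $k\to\infty$ kills $P(m)b^{-(k-\ell)}$ together with the fixed-$k$ contributions of $(\mathrm{II})$ and $(\mathrm{III})$, leaving the bound $R(m)$ for every $m$; as $R(m)\to 0$ when $m\to\infty$, the first equality follows.
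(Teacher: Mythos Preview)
Your proof is correct and follows a genuinely different route from the paper's. Both proofs use the same three-term decomposition $(\mathrm I)+(\mathrm{II})+(\mathrm{III})$ (the paper's $A_1,A_2,A_3$), and both handle $(\mathrm{II})$ via unique ergodicity and $(\mathrm{III})$ via dominated convergence. The real divergence is twofold. First, for the envelope in $(\mathrm{III})$: the paper decomposes $\Delta_k^{(r)}=\sum_{i<r}\Delta_k^{(1)}\circ T^i$, expands via the multinomial theorem and Young's inequality, and dominates by functions $g_i:=\sup_k|\Delta_k^{(1)}\circ T^i|$ whose integrability is checked from the carry tail; you instead extract the single envelope $E=s(r)+b+(b-1)c$ directly from the carry identity, which is cleaner and avoids the multinomial/Young detour. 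Second, for $(\mathrm I)$: the paper proves a separate Rokhlin-tower comparison lemma (their Lemma~\ref{lemme_technique}) showing that the empirical joint density of $(\Delta^{(r)},\Delta_k^{(r)})$ along $\{0,\ldots,N-1\}$ is bounded by $rb$ times its $\PP$-measure, which immediately gives $A_1\le rb\,A_3$ and reduces both hard terms to one dominated-convergence argument; you instead exploit that $\Delta^{(r)}=\Delta_k^{(r)}$ on $\{c_k=0\}$, truncate the carry count, and control the tail by an explicit density count. Your route is more elementary and entirely self-contained; the paper's route is more structural, and its Lemma~\ref{lemme_technique} is a reusable tool (it is invoked again in the proof of Proposition~\ref{prop_gene}).
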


\begin{corollaire}\label{corollaire_prop_moment}
 For every $d\in\ZZ$
\begin{align}\label{lien_mu^r_et_mu}
        \mu^{(r)}(d)&:=\lim\limits_{N\to\infty}\frac{1}{N} \left|\left\{n<N: \Delta^{(r)}(n)=d\right\}\right| \notag \\
        &=\PP\left(\left\{x\in \XX: \Delta^{(r)}(x)=d\right\}\right).
\end{align}
Moreover, $\Delta^{(r)}$ has zero-mean and has finite moments. 
\end{corollaire}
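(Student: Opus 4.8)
The plan is to obtain every assertion of the corollary directly from Proposition~\ref{prop_moment}, simply by feeding it well-chosen test functions $f:\ZZ\to\CC$.

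For the density--measure identity, I would apply Proposition~\ref{prop_moment} to the indicator $f=\mathbf{1}_{\{d\}}$. Being bounded by $1$, this $f$ trivially fulfils the polynomial-growth condition \eqref{hyp_prop_moment2}: taking $\alpha=1$ and $C=1$ works, as one checks by separating the cases $d=0$ (where $|f(0)|=1$) and $d\ne 0$ (where $|f(0)|=0$ but $|n|\ge 1$ on the support of $f$). Proposition~\ref{prop_moment} then yields at once
\begin{align*}
\frac1N\big|\{n<N:\Delta^{(r)}(n)=d\}\big|
&=\frac1N\sum_{n<N}\mathbf{1}_{\{d\}}\big(\Delta^{(r)}(n)\big)\\
&\xrightarrow[N\to\infty]{}\int_{\XX}\mathbf{1}_{\{d\}}\big(\Delta^{(r)}(x)\big)\,\mathrm{d}\PP(x)
=\PP\big(\{x\in\XX:\Delta^{(r)}(x)=d\}\big),
\end{align*}
which is exactly the claimed equality. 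A useful byproduct is that, under $\PP$, the function $\Delta^{(r)}$ has law $\mu^{(r)}$, so moments of $\mu^{(r)}$ coincide with the integrals $\int_{\XX}(\Delta^{(r)})^p\,\mathrm{d}\PP$.

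For the finiteness of moments, I would apply Proposition~\ref{prop_moment} to the monomial $f(n)=n^p$ (any $p\ge 1$), which obviously satisfies \eqref{hyp_prop_moment2}. The conclusion $f\circ\Delta^{(r)}\in L^1(\PP)$ means precisely that $(\Delta^{(r)})^p$ is integrable; combined with the previous remark that $\Delta^{(r)}$ has law $\mu^{(r)}$, this says the $p$-th moment of $\mu^{(r)}$ is finite.

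The one point demanding a genuine argument is the zero-mean statement. Here I would take $f(n)=n$ in Proposition~\ref{prop_moment}, which gives $\int_{\XX}\Delta^{(r)}\,\mathrm{d}\PP=\lim_{k\to\infty}\int_{\XX}\Delta_k^{(r)}\,\mathrm{d}\PP$, so it suffices to show each $\int_{\XX}\Delta_k^{(r)}\,\mathrm{d}\PP$ vanishes. Writing $\Delta_k^{(r)}(x)=s_k(x+r)-s_k(x)$ and using that the translation $x\mapsto x+r$ (that is, $T^r$) preserves the Haar measure $\PP$, the change of variables $y=x+r$ gives $\int_{\XX}s_k(x+r)\,\mathrm{d}\PP(x)=\int_{\XX}s_k(y)\,\mathrm{d}\PP(y)$, so the two contributions cancel and $\int_{\XX}\Delta_k^{(r)}\,\mathrm{d}\PP=0$ for every $k$; passing to the limit yields $\int_{\XX}\Delta^{(r)}\,\mathrm{d}\PP=0$. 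I expect no real obstacle: the corollary is essentially a repackaging of Proposition~\ref{prop_moment}, and the only subtle move—interchanging limit and integral in the mean computation—is already licensed by the final equality of that proposition, with the translation-invariance of $\PP$ doing the rest. (One could instead invoke the carry formula \eqref{lien_delta_retenue_odometre}, but the invariance argument is cleaner.)
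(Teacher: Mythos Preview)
Your proposal is correct and follows essentially the same approach as the paper: apply Proposition~\ref{prop_moment} successively with $f=\mathds{1}_{\{d\}}$, $f(n)=n$, and $f(n)=n^j$, and establish the zero-mean by using the $T$-invariance of $\PP$ to show $\int_{\XX}\Delta_k^{(r)}\,\mathrm{d}\PP=0$ for each $k$ before passing to the limit. The only cosmetic difference is that you spell out the verification of \eqref{hyp_prop_moment2} for the indicator, which the paper leaves implicit.
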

In particular, we recover Bésineau's result on the existence of the asymptotic density and the fact that $\sum_{d\in\ZZ} \mu^{(r)}(d)=1$.
\begin{remarque}\label{et pour 0 ?}
    Using trivial arguments, Proposition~\ref{prop_moment} and Corollary~\ref{corollaire_prop_moment} are also true when $r=0$. We observe that $\mu^{(0)}=\delta_0$.
\end{remarque}

Before proving this proposition and its corollary, we need some lemmas.
\begin{lemme}\label{lemme_technique}
    Let $r\ge 1$. For $N\in\NN^{*}$, for $k\in\NN$ and $d,d'\in\ZZ$, we have the inequality
    \begin{equation}\label{ineg_techni2}
        \begin{split}
             \frac{1}{N}\left| \{n<N: (\Delta^{(r)}(n),\Delta^{(r)}_k(n))=(d,d')\} \right| & \\
             & \hspace{-3 cm}\le rb  \hspace{1mm}\PP \hspace{-1mm} \left( \{x\in \XX: (\Delta^{(r)}(x),\Delta^{(r)}_k(x))=(d,d')\}\right).
        \end{split}
    \end{equation}
    In particular, we have
    \begin{align}\label{ineg_techni}
        \frac{1}{N}\left| \{n<N: \Delta^{(r)}(n)=d\} \right| \le rb \hspace{1mm} \PP \hspace{-1mm} \left( \{x\in \XX: \Delta^{(r)}(x)=d\}\right).
    \end{align}

\end{lemme}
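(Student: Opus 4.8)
The plan is to prove the stronger inequality \eqref{ineg_techni2} and then obtain \eqref{ineg_techni} by summing over $d'$ (for each $n$ the value $\Delta^{(r)}_k(n)$ is a single integer, so the events $\{(\Delta^{(r)},\Delta^{(r)}_k)=(d,d')\}$ partition $\{\Delta^{(r)}=d\}$, both among integers and in $\XX$). Fix $d,d',k$ and set $A:=\{x\in\XX:(\Delta^{(r)}(x),\Delta^{(r)}_k(x))=(d,d')\}$. Since $\Delta^{(r)}_k$ is continuous and $\Delta^{(r)}$ is defined and locally constant off a finite (hence $\PP$-null, integer-free) set, $A$ agrees up to that set with a disjoint union of cylinders; I decompose it into its \emph{maximal} cylinders $C_w$, those with $C_w\subseteq A$ but whose parent (erase the top digit of $w$) is not contained in $A$. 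Writing $m:=|w|$ and $a:=\overline{w}<b^m$ for the smallest integer of $C_w$, we have $\PP(C_w)=b^{-m}$, and the integers of $C_w$ are exactly the $n\equiv a\pmod{b^m}$. As the $C_w$ partition $A$, \eqref{ineg_techni2} reduces by summation to the per-cylinder bound $|\{n<N:n\in C_w\}|\le rb\,N\,b^{-m}$.

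Everything rests on the single estimate, valid for every maximal cylinder,
\[a+r\ge b^{m-1}.\]
Granting it, multiply by $b$ to get $b^m\le b(a+r)$; since $rb(a+1)-b(a+r)=ba(r-1)\ge0$, this yields $b^m\le rb(a+1)$. The per-cylinder count is then elementary: $|\{n<N:n\equiv a\pmod{b^m}\}|$ vanishes for $a\ge N$ and is at most $(N-1-a)/b^m+1$ otherwise, and a short computation using $N\ge a+1$ and $rb\ge2$ turns $b^m\le rb(a+1)$ into the desired $|\{n<N:n\in C_w\}|\le rb\,N\,b^{-m}$. The tight instance is $N=a+1$, where the count is $1$ and one needs exactly $b^m\le rb(a+1)$.

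The crux, and the step I expect to be hardest, is $a+r\ge b^{m-1}$; I would prove it by contradiction from maximality. Let $L$ be the number of base-$b$ digits of $r$, so $b^{L-1}\le r<b^L$ and $r_\ell=0$ for $\ell\ge L$. Suppose $a+r<b^{m-1}$; then $a<b^{m-1}$, so the top digit $w_{m-1}$ equals $0$. If $m-1\ge L$, then $r_{m-1}=0$ and $a+r<b^{m-1}$ forces no carry out of position $m-2$; consequently, for the parent cylinder, putting any digit $j$ at position $m-1$ gives $(x+r)_{m-1}=j$ with no carry out, so position $m-1$ contributes $0$ to $\Delta^{(r)}$ (and to $\Delta^{(r)}_k$ when $k\ge m-1$, while it is unseen when $k<m-1$), and all higher positions are untouched. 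Hence $\Delta^{(r)}$ and $\Delta^{(r)}_k$ are constant on the parent cylinder, which therefore lies in $A$ --- contradicting maximality. If instead $m-1<L$, then $b^{m-1}\le b^{L-1}\le r$, so $a+r\ge r\ge b^{m-1}$, again contradicting the assumption. Thus $a+r\ge b^{m-1}$ always.

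Summing the per-cylinder bounds over the countable family of maximal cylinders and using additivity of $\PP$ gives \eqref{ineg_techni2}, and summing the latter over $d'$ gives \eqref{ineg_techni}. Conceptually, the estimate $a+r\ge b^{m-1}$ is where the factor $rb$ is spent: it records that a maximal cylinder stops either within the first $L$ digits of $r$ (the short cylinders, controlled by $r\ge b^{L-1}$) or exactly where the carry triggered by $r$ ceases to propagate through a run of $(b-1)$'s (forcing the low digits, hence $a$, close to $b^{m-1}$). I would still double-check the boundary case $m=L$ and the degenerate empty cylinders, but these are routine.
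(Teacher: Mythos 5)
Your proof is correct, and it takes a genuinely different route from the paper's, although both hinge on the same carry-stopping mechanism. The paper fixes the value pair $(d,d')$ and works with the filtration of Rokhlin towers: letting $V_\ell$ be the set of values attained by $(\Delta^{(r)},\Delta^{(r)}_k)$ on the first $b^{\ell+1}-r$ levels of the tower of order $\ell$, it locates the first scale $\ell$ with $(d,d')\in V_\ell\setminus V_{\ell-1}$; constancy on whole levels gives $\PP\ge b^{-(\ell+1)}$, while $(d,d')\notin V_{\ell-1}$ confines the integers realizing the pair to the $r$ highest levels of the tower of order $\ell-1$, giving density at most $r/b^{\ell}$, whence the factor $rb$ in one stroke per value pair. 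You instead partition the level set itself into maximal cylinders and prove the stronger, local estimate $|\{n<N:n\in C_w\}|\le rb\,N\,\PP(C_w)$ cylinder by cylinder, via the arithmetic criterion $a+r\ge b^{m-1}$ — which is exactly your translation of maximality, and which is the same fact the paper uses in disguise: ``not in $V_{\ell-1}$'' means $n\bmod b^{\ell}\ge b^{\ell}-r$, i.e.\ the low digits are within $r$ of overflow. I checked the delicate points and they hold: the contradiction argument correctly splits into $m-1<L$ (where $b^{m-1}\le b^{L-1}\le r$) and $m-1\ge L$ (where $w_{m-1}=0$, no carry leaves position $m-2$, so position $m-1$ contributes $0$ to both $\Delta^{(r)}$ and $\Delta^{(r)}_k$ on the parent, forcing the parent into the level set); the chain $b^m\le b(a+r)\le rb(a+1)$ and the counting at the tight instance $N=a+1$ are exact; and your exceptional set is indeed finite (it is $\{n-r:0\le n<r\}$, containing no integers), so the maximal-cylinder partition is legitimate. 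What each approach buys: the paper's argument is shorter and avoids any decomposition of the level set, spending the constant once at a single critical scale; yours proves a cylinder-wise inequality that is strictly more informative, dispenses with the tower formalism in favor of explicit digit arithmetic, but pays for it in the bookkeeping of the maximal-cylinder decomposition. Your deduction of \eqref{ineg_techni} from \eqref{ineg_techni2} by summing over the (finite range of) $d'$ is the natural one; the paper asserts the implication without comment.
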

\begin{proof}
Of course, \eqref{ineg_techni2} implies \eqref{ineg_techni} so we just need to prove \eqref{ineg_techni2}. We fix $k\in\NN$. For every $\ell\in\NN$, let $V_{\ell}$ be the set of the values reached by the couple $(\Delta^{(r)},\Delta^{(r)}_k)$ on the first $b^{\ell+1}-r$ levels of the Rokhlin tower of order $\ell$ (see Figure \ref{descr_Vl}). Of course, if $b^{\ell+1}-r\le 0$  then $V_{\ell}:=\emptyset$. Otherwise, we observe that $V_{\ell}$ is a finite set. Indeed, the first $b^{\ell+1}-r$ levels correspond to the $b$-adic integers $x$ such that, when we add $r$, the carry propagation does not go beyond the first $\ell+1$ digits. Since these digits are fixed on a level of the Rokhlin tower of order $\ell$, except for the last $r$ levels, $\Delta^{(r)}$ and $\Delta^{(r)}_k$ are constant on each such level. We observe that the sequence $(V_{\ell})_{\ell\ge 0}$ is increasing for the inclusion.

Now, for $d,d'\in\ZZ$, there are $2 $ cases.
\begin{enumerate}
    \item If $(d,d')\notin\cup_{\ell\ge 0} V_{\ell}$, then for each $n\in\NN$ we have $(\Delta^{(r)}(n),\Delta^{(r)}_k(n))\neq(d,d')$. Indeed, for each $n\in\NN$, there exists a smallest integer $\ell$ such that $n$ is in the first $b^{\ell+1}-r$ levels of the tower of order $\ell$, hence $(\Delta^{(r)}(n),\Delta^{(r)}_k(n)) \\ \in V_{\ell}$. In this case, \eqref{ineg_techni2} is trivial.
    \item If $(d,d')\in\cup_{\ell\ge 0} V_{\ell}$ then there exists a unique $\ell\ge 0$ such that $(d,d')\in V_{\ell}\backslash V_{\ell-1}$ (with the convention $V_{-1}:=\emptyset$). Since $(\Delta^{(r)},\Delta^{(r)}_k)$ is constant on each of the first $b^{\ell+1}-r$ levels of the tower, it takes the value $(d,d')$ on at least one whole such level of measure $\frac{1}{b^{\ell+1}}$. So, we have  
    \[\PP\hspace{-1mm} \left( \{x\in \XX: (\Delta^{(r)}(x),\Delta^{(r)}_k(x))=(d,d')\}\right)\ge \frac{1}{b^{\ell+1}}.\]
    Also, since the couple $(d,d')$ does not appear in the first levels of the previous tower ($(d,d')\notin V_{\ell-1}$), we claim that, for every $N\ge 1$
    \[\frac{1}{N}\left| \{n<N: (\Delta^{(r)}(n),\Delta^{(r)}_k(n))=(d,d')\} \right|\le \frac{r}{b^{\ell}}.\]
    Indeed,
    \begin{enumerate}
    \item If $r\ge b^{\ell}$, then the inequality is then trivial.
    \item If $r< b^{\ell}$ then, since $(d,d')$ is not in $V_{\ell-1}$, $(d,d')$ can only appear inside the  $r$ highest levels of the tower of order $\ell-1$. So, if we note $C$ the union of these $r$ highest levels, using the fact that $0$ is in the first level of the tower, we have
    \begin{align*}
        \frac{1}{N}\left| \{0\le n<N: (\Delta^{(r)}(n),\Delta^{(r)}_k(n))=(d,d')\} \right|&\\
        &\hspace{-1.38cm}\le \frac{1}{N}\left| \{0\le n<N: T^n0\in C \}\right|\\
        &\hspace{-1.38cm}\le\frac{r}{b^{\ell}}.
    \end{align*}
\end{enumerate} 
    Combining both inequalities gives \eqref{ineg_techni2}.
\end{enumerate} 

\begin{figure}[H]
    \centering
    \includegraphics[scale=0.35]{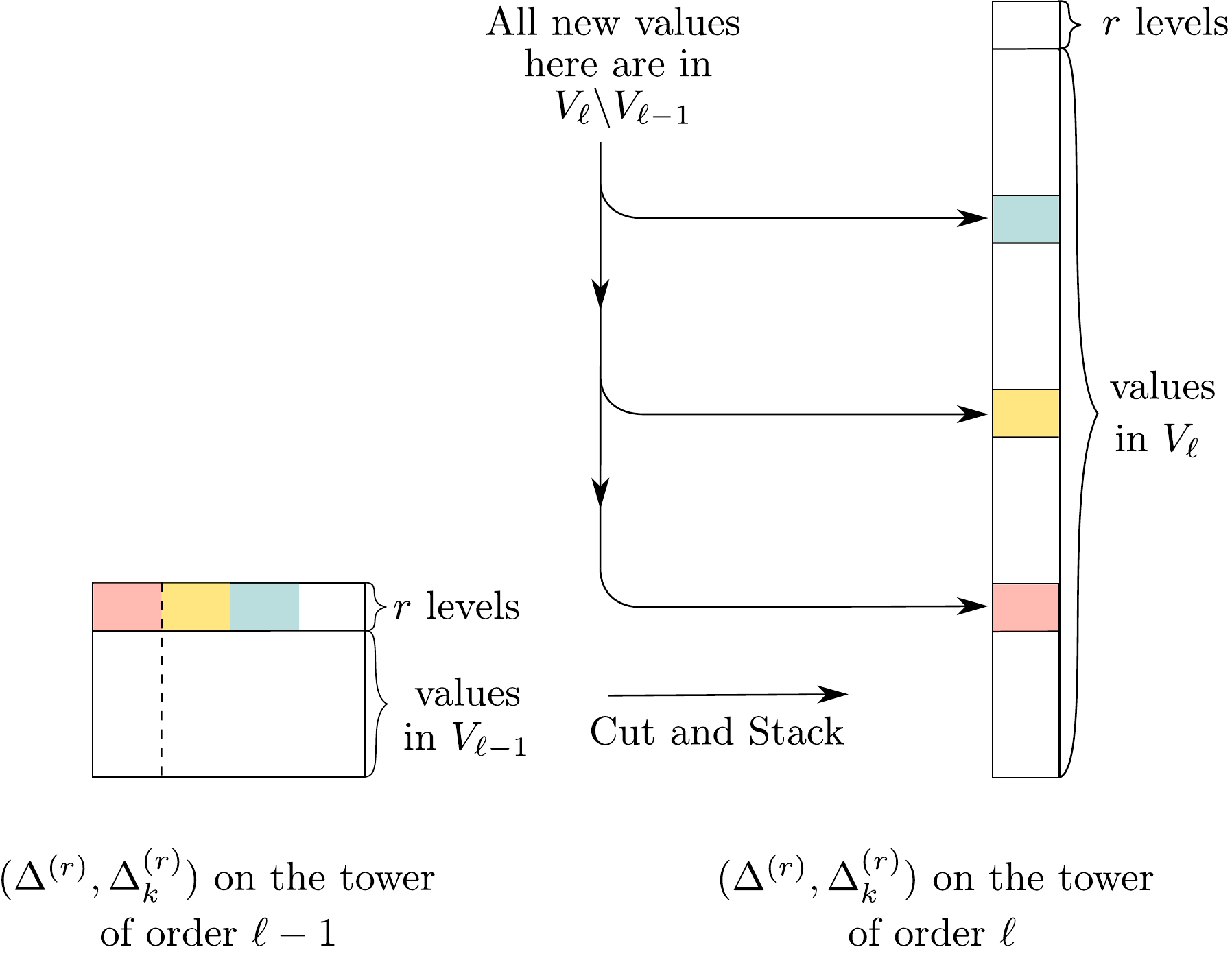}
    \caption{Visual description of $V_{\ell-1}$, $V_{\ell}$ and $V_{\ell}\setminus V_{\ell-1}$.}
    \label{descr_Vl}
\end{figure}
\end{proof}

We also need the following lemma.
\begin{lemme}\label{g_i is in L1}
Let $i\in\NN$. For every $x\in\XX$, we define the function
\[g_i(x):=\sup_{k\in\NN}\lvert\Delta_k^{(1)}\circ T^i(x)\rvert.\]
Then, for any $N\in\NN$, $g_i^N\in L^1(\PP)$.
\end{lemme}
\begin{proof}
It is equivalent to show that $\sum_m \PP\left(\{x\in \XX: g_i(x)^{N}>m\}\right)$ is a convergent series. We have 
\begin{align*}
    g_i(x)> m^{\frac{1}{N}} &\Leftrightarrow \sup_{k\in\NN}|\Delta^{(1)}_k\circ T^i(x)|> m^{\frac{1}{N}}\\&\Leftrightarrow \exists k\in\NN, \hspace{2mm} \lvert\Delta_k^{(1)}(T^ix) \rvert > m^{\frac{1}{N}}. \\
\end{align*}
From \eqref{lien_delta_retenue_odometre}, this condition is true if and only if  the addition $T^ix+1$ creates sufficiently many carries: strictly more than $\lfloor \frac{m^\frac{1}{N}+1}{b-1}\rfloor$. But, we know that when we add $1$ to $T^ix$, the number of carries created by the addition is the number of $(b-1)$'s at the right-hand side of the expansion of $T^i x$. So, because of the $T$-invariance of $\PP$
 \begin{align*}
     \PP\left(\{x\in \XX: g_i(x)>m^{\frac{1}{N}}\}\right) &\le \left(\frac{1}{b}\right)^{\left\lfloor \frac{m^\frac{1}{N}+1}{b-1}\right\rfloor}.
 \end{align*}
 The right quantity is the general term of a convergent series which shows that $g_i^{N}\in L^1(\PP)$.
\end{proof}

\begin{proof}[Proof of Proposition \ref{prop_moment}]
Let $\varepsilon>0$. For any integer $k$, we have
\begin{align*}
    \left\vert\frac{1}{N}\sum_{n<N} f(\Delta^{(r)}(n))-\int_{\XX} f(\Delta^{(r)}(x)) \mathrm{d}\PP(x)\right\vert 
        &\le A_1+A_2+A_3,
\end{align*}
where
\begin{align*}
    A_1 &:= \displaystyle\frac{1}{N}\sum_{n<N} \left\vert f(\Delta^{(r)}(n))-f(\Delta^{(r)}_k(n))\right\vert, \\ 
    A_2 &:= \displaystyle\left\vert\frac{1}{N}\sum_{n<N}f(\Delta^{(r)}_k(n))-\int_{\XX} f(\Delta^{(r)}_k(x)) \mathrm{d}\PP(x)\right\vert, \\
    A_3 &:=\displaystyle\int_{\XX} \left\vert f(\Delta^{(r)}(x))-f(\Delta^{(r)}_k(x))\right\vert \mathrm{d}\PP(x).
\end{align*}

For $A_1$, using Lemma~\ref{lemme_technique}, we get
\begin{align*}
    A_1&=\frac{1}{N}\sum_{n<N} \sum_{j,j'\in\ZZ}\Big\vert f(j)-f(j')\Big\vert\mathbbm{1}_{(j,j')}\left(\Delta^{(r)}(n),\Delta^{(r)}_k(n)\right)\\
    %&=\frac{1}{N}\sum_{n<N} \sum_{\substack{j\leq M(n) \\ j'\leq M'(n)}}\left(f(j)-f(j')\right)\mathbbm{1}_{(j,j')}\left(\Delta^{(r)}(n),\Delta^{(r)}_k(n)\right)\\
%\end{align*}
%Now, we note $M:=\max\{M(n):n=0,\cdots, N-1\}$ and $M':=\max\{M'(n):n=0,\cdots, N-1\}$. It follows from Lemma \ref{lemme_technique}
%\begin{align*}
    &= \sum_{j,j'\in\ZZ}\Big\vert f(j)-f(j')\Big\vert\frac{1}{N}\sum_{n<N}\mathbbm{1}_{(j,j')}\left(\Delta^{(r)}(n),\Delta^{(r)}_k(n)\right)\\
    &\le rb\sum_{j,j'\in\ZZ}\Big\vert f(j)-f(j')\Big\vert \hspace{1mm}\PP \hspace{-1mm} \left( \Delta^{(r)}(n) = j , \Delta^{(r)}_k(n)=j'\right)\\
    &=rb\int_{\XX} \Big\vert f(\Delta^{(r)}(x))-f(\Delta^{(r)}_k(x))\Big\vert\mathrm{d}\PP(x)=rb A_3.
\end{align*}
Hence, controlling $A_3$ also enables us to control $A_1$. For this, we note that the integrand in $A_3$ converges $\PP$-almost everywhere to $0$, therefore it is enough to show that the dominated convergence theorem applies. To find a good dominant function, we write using \eqref{hyp_prop_moment2}
\begin{align*}
    \left\lvert f\circ\Delta^{(r)}_k(x)\right\rvert&\le C\left\lvert\Delta^{(r)}_k(x)\right\rvert^{\alpha}+\Big\lvert f(0)\Big\rvert.\\
\end{align*}
We get from \eqref{decomp_sumk}
 \[\Delta^{(r)}_k=\Delta^{(1)}_k +\Delta^{(1)}_k\circ T + \cdots + \Delta^{(1)}_k\circ T^{r-1},\]
then we use the multinomial theorem (we can suppose $\alpha\in\NN)$ to write
\begin{align*}
    \left\lvert\Delta^{(1)}_k(x)+\cdots+\Delta^{(1)}_k\circ T^{r-1}(x)\right\rvert^{\alpha}
    & = \sum_{j_0+...+j_{r-1}=\alpha} \binom{\alpha}{j_0,...,j_{r-1}} \prod_{i=0}^{r-1} \left\lvert \Delta^{(1)}_k \circ T^i(x) \right\rvert^{j_i}.\\
\end{align*}
We now use Young's inequality to get
\begin{align*}
    \prod_{i=0}^{r-1} \left\lvert \Delta^{(1)}_k \circ T^i(x) \right\rvert^{j_i}&\le \frac{1}{r}\sum_{i=0}^{r-1}\left\lvert\Delta^{(1)}_k\circ T^i(x)\right\rvert^{rj_i}.\\
\end{align*}
Then, using the function $g_i$ defined in Lemma~\ref{g_i is in L1}, we get the inequality
\[\left\lvert f\circ\Delta^{(r)}_k(x)\right\rvert\le C \sum_{j_0+\cdots+j_{r-1}=\alpha}\sum_{i=0}^{r-1}\frac{\binom{\alpha}{j_0,\cdots,j_{r-1}}}{r}g_i(x)^{rj_i}+\Big\lvert f(0)\Big\rvert.\]
Moreover, when it is well defined, we have $\Delta^{(r)}\circ T^i=\lim\limits_{k\to\infty}\Delta^{(r)}_k\circ T^i$ therefore, we also get $\left\vert\Delta^{(r)}\circ T^i\right\vert\le g_i$ which yields the similar inequality
\[\left\lvert f\circ\Delta^{(r)}(x)\right\rvert\le C \sum_{j_0+\cdots+j_{r-1}=\alpha}\sum_{i=0}^{r-1}\frac{\binom{\alpha}{j_0,\cdots,j_{r-1}}}{r}g_i(x)^{rj_i}+\Big\lvert f(0)\Big\rvert.\]
As proved in Lemma~\ref{g_i is in L1}, $g_i^{rj_i}\in L^1(\PP)$ thus the dominated convergence theorem can be applied and, for $k$ large enough, $A_1+A_3\le \frac{\varepsilon}{2}$ for every $N\ge 1$.\\
 Now, once we have fixed such a $k$, for $N$ large enough, $A_2$ is bounded by $\frac{\varepsilon}{2}$ because of \eqref{uniq_ergo+birk} and the continuity of $\Delta_k^{(r)}$ and of $f$. The convergence in the statement is thus proved.
 
 Note that the argument of the dominated convergence theorem also proves that $f\circ\Delta^{(r)}\in L^1(\PP)$ and $\int_{\XX} f\circ \Delta^{(r)}\mathrm{d}\PP=\lim\limits_{k\rightarrow \infty}\int_{\XX} f\circ\Delta^{(r)}_k\mathrm{d}\PP$.
 \end{proof}
 \begin{proof}[Proof of Corollary \ref{corollaire_prop_moment}]
 We just apply Proposition \ref{prop_moment} with particular functions $f$. First, for $d\in\ZZ$, we use the function $f=\mathbbm{1}_{\{d\}}$. It gives
 \[\mu^{(r)}(d)=\PP\left(\left\{x\in \XX: \Delta^{(r)}(x)=d\right\}\right).\]
 Then, we take $f$ as the identity function on $\ZZ$ for which $\eqref{hyp_prop_moment2}$ is clearly satisfied. Also, for every integer $k$, we have by $T$-invariance of $\PP$
    \[\int_{\XX} \Delta^{(r)}_k \mathrm{d}\PP=\int_{\XX} s_k\circ T^r\mathrm{d}\PP - \int_{\XX} s_k\mathrm{d}\PP=0.\]
We then deduce that
\begin{align}\label{esperance_nulle}
    \sum_{d\in\ZZ} d\,\mu^{(r)}(d) = \int_{\XX} \Delta^{(r)} \mathrm{d}\PP=\lim\limits_{k\rightarrow \infty}\int_{\XX} \Delta^{(r)}_k \mathrm{d}\PP=0.  
\end{align}
Finally, we use, for every $j\ge 2$, the function $f(n)=n^j$ which satisfies \eqref{hyp_prop_moment2}. This gives the existence of moments of order $j$ for $\Delta^{(r)}$.
\end{proof}

More generally, we have the following convergence.
\begin{prop}\label{prop_gene}
Let $r\ge 1$ and $f:\ZZ\rightarrow\CC$ be such that $f\circ\Delta^{(r)}\in L^1(\PP)$. Then 
    \[\lim\limits_{N\to\infty}\frac{1}{N}\sum_{n<N}f(\Delta^{(r)}(n))=\int_{\XX} f(\Delta^{(r)}(x)) \mathrm{d}\PP(x).\]
\end{prop}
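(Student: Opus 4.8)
The plan is to reduce to the polynomial-growth case already settled in Proposition~\ref{prop_moment} by a truncation argument, using the domination inequality \eqref{ineg_techni} of Lemma~\ref{lemme_technique} to control the tails \emph{uniformly in $N$}. For $M\ge 0$ I would introduce the truncated function $f_M:=f\cdot\mathds{1}_{\{|d|\le M\}}$. Since $f_M$ is bounded (indeed finitely supported), it satisfies the polynomial-growth hypothesis \eqref{hyp_prop_moment2}, so Proposition~\ref{prop_moment} applies and yields, for each fixed $M$,
\[\lim_{N\to\infty}\frac{1}{N}\sum_{n<N}f_M(\Delta^{(r)}(n))=\int_{\XX}f_M(\Delta^{(r)})\,\mathrm{d}\PP.\]
Equivalently, one writes $f_M$ as a finite linear combination of indicators $\mathds{1}_{\{d\}}$ and invokes Corollary~\ref{corollaire_prop_moment}.

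Next I would estimate the tail $f-f_M=f\cdot\mathds{1}_{\{|d|>M\}}$ on both sides. On the integral side, since $f\circ\Delta^{(r)}\in L^1(\PP)$, dominated convergence gives
\[\int_{\XX}\big|f-f_M\big|(\Delta^{(r)})\,\mathrm{d}\PP=\int_{\{|\Delta^{(r)}|>M\}}\big|f(\Delta^{(r)})\big|\,\mathrm{d}\PP\xrightarrow[M\to\infty]{}0.\]
On the empirical side, decomposing along the value $d=\Delta^{(r)}(n)$ and applying \eqref{ineg_techni} term by term yields, uniformly in $N$,
\[\frac{1}{N}\sum_{n<N}\big|f-f_M\big|(\Delta^{(r)}(n))=\sum_{|d|>M}|f(d)|\,\frac{|\{n<N:\Delta^{(r)}(n)=d\}|}{N}\le rb\int_{\{|\Delta^{(r)}|>M\}}\big|f(\Delta^{(r)})\big|\,\mathrm{d}\PP.\]

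Finally I would combine these by a standard $3\varepsilon$ argument: given $\varepsilon>0$, first choose $M$ so large that $rb\int_{\{|\Delta^{(r)}|>M\}}|f(\Delta^{(r)})|\,\mathrm{d}\PP<\varepsilon/3$, which bounds both tail contributions at once (the integral tail being no larger than its empirical counterpart since $rb\ge1$); then, with $M$ fixed, choose $N$ large enough that the truncated empirical average is within $\varepsilon/3$ of $\int_{\XX}f_M(\Delta^{(r)})\,\mathrm{d}\PP$, using the first display. The triangle inequality then gives $\big|\tfrac{1}{N}\sum_{n<N}f(\Delta^{(r)}(n))-\int_{\XX}f(\Delta^{(r)})\,\mathrm{d}\PP\big|<\varepsilon$. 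The only genuine difficulty is the uniform-in-$N$ control of the empirical tail: dominated convergence dispatches the integral, but the empirical averages have no obvious common dominating function, and it is exactly inequality \eqref{ineg_techni}—bounding each empirical density by $rb$ times the true density $\PP(\Delta^{(r)}=d)$—that supplies the required uniformity.
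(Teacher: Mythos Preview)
Your argument is correct and follows essentially the same route as the paper: both proofs hinge on inequality~\eqref{ineg_techni} to dominate the empirical densities uniformly in $N$, then pass to the limit. The only cosmetic difference is packaging: the paper indexes by the carry count $k$ (using that $\Delta^{(r)}$ takes only the values $a_k^{(r)}=s(r)-k(b-1)$) and invokes dominated convergence on $\ell^1(\NN)$, whereas you truncate by $|d|\le M$ and run a manual $3\varepsilon$ argument---these are equivalent formulations of the same idea.
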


\begin{proof}[Proof of Proposition~\ref{prop_gene}.]
By \eqref{lien_delta_retenue_odometre}, the values reached by $\Delta^{(r)}$ are of the form $a_k^{(r)}:=s(r)-k(b-1)$, $k\ge 0$, and we have
    \begin{align}\label{2}
        \int_{\XX} f\left(\Delta^{(r)}(x)\right) \mathrm{d}\PP(x)&=\sum_{k\ge 0} f\left(a_k^{(r)}\right)\PP\left(\{x\in \XX: \Delta^{(r)}(x)=a_k^{(r)}\}\right).
    \end{align}
On the other hand, we write
    \begin{align*}
        \frac{1}{N}\sum_{n<N}f\left(\Delta^{(r)}(n)\right)  &=\frac{1}{N}\sum_{n<N}f\left(\Delta^{(r)}(n)\right)\sum_{k\ge 0}\mathbbm{1}_{\left\{a_k^{(r)}\right\}}\left(\Delta^{(r)}(n)\right) \notag\\
        &=\sum_{k\ge0}\underbrace{f\left(a_k^{(r)}\right)\frac{1}{N}\sum_{n<N}\mathbbm{1}_{\left\{a_k^{(r)}\right\}}\left(\Delta^{(r)}(n)\right)}_{:=u_{N,k}}.
    \end{align*}
We conclude by applying the dominated convergence theorem in the metric space $\ell^1(\NN)$ endowed with the counting measure to show 
\[\sum_{k\ge0} u_{N,k}\tend{N}{+\infty}{}{\sum_{k\ge 0} f\left(a_k^{(r)}\right)\PP\left(\{x\in \XX: \Delta^{(r)}(x)=a_k^{(r)}\}\right)}.\]
\begin{enumerate}
    \item The pointwise limit on $\NN$ of $u_{N,k}$ is $f\left(a_k^{(r)}\right)\PP\left(\{x\in \XX: \Delta^{(r)}(x)=a_k^{(r)}\}\right)$ by \eqref{lien_mu^r_et_mu}.
    \item A dominant function is given using Lemma \ref{lemme_technique}: for all $k$ and $N$ we have
    \[u_{N,k}\le g(k):=\left\lvert f\left(a_k^{(r)}\right)\right\rvert rb \hspace{1mm} \PP \hspace{-1mm} \left( \{x\in \XX: \Delta^{(r)}(x)=a_k^{(r)}\}\right).\]
    By \eqref{2} and the hypothesis that $f\circ\Delta^{(r)}$ is integrable, $\sum g(k)$ is convergent. 
\end{enumerate}
\end{proof}

We have shown that, for any integer $r$, $\Delta^{(r)}$ has finite moments. The next section will focus on the second moment, which is the one we are most interested in for our CLT.

\section{Variance of \texorpdfstring{$\mu^{(r)}$}{Dr}}\label{Variance of Delta}
        \subsection{Inductive relation on the measures}
Given $r\in\NN$, there exist $\widetilde{r}\in\NN$ and $0\le r_0\le b-1$ such that $r=b\widetilde{r}+r_0$. The integer $r_0$ is actually the units digit of the expansion of $r$ and, if $r\ge b$, $\widetilde{r}$ corresponds to the integer whose expansion is obtained by erasing $r_0$ in the expansion of $r$. The expansion of $\widetilde{r}$ is then one digit shorter than the expansion of $r$. If $r<b$ then, of course, $r_0=r$ and $\widetilde{r}=0$. 
First of all, we have a well known inductive relation on the length of the expansion of $r$.
\begin{prop}\label{relat_rec_mesure}
For $\widetilde{r}\in\NN$, $0\le r_0\le b-1$ and $d\in\ZZ$ 
\begin{align}\label{relat rec mu}
    \mu^{(b\widetilde{r}+r_0)}(d)&=\frac{b-r_0}{b}\mu^{(\widetilde{r})}(d-r_0)+\frac{r_0}{b}\mu^{(\widetilde{r}+1)}(d+b-r_0).
\end{align}
\end{prop}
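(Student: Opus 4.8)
The plan is to work directly with the probabilistic interpretation $\mu^{(r)}(d) = \PP(\{x \in \XX : \Delta^{(r)}(x) = d\})$ supplied by Corollary~\ref{corollaire_prop_moment}, and to condition on the units digit $x_0$ of a Haar-random $x$. The starting point is a purely algebraic decomposition of the $b$-adic integer $x$: writing $y := (x_1, x_2, \ldots) \in \XX$ for the $b$-adic integer obtained by deleting the units digit of $x$, we have $x = x_0 + by$, while $r = r_0 + b\widetilde{r}$ by hypothesis. Adding these yields the identity $x + r = (x_0 + r_0) + b(y + \widetilde{r})$, which already separates the contribution of position $0$ from the contribution of the higher-order positions.

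First I would analyse the sum of digits on each side of this identity according to whether a carry is created at position $0$. If $x_0 + r_0 < b$, then $x_0 + r_0$ is the units digit of $x+r$ and the digits of $y + \widetilde{r}$ fill the positions $\ge 1$, so $s(x+r) = (x_0 + r_0) + s(y + \widetilde{r})$; subtracting $s(x) = x_0 + s(y)$ gives $\Delta^{(r)}(x) = r_0 + \Delta^{(\widetilde{r})}(y)$. If instead $x_0 + r_0 \ge b$, then $x + r = (x_0 + r_0 - b) + b(y + \widetilde{r} + 1)$, the units digit is $x_0 + r_0 - b \in \{0, \ldots, b-2\}$, and the same subtraction gives $\Delta^{(r)}(x) = (r_0 - b) + \Delta^{(\widetilde{r}+1)}(y)$. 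In both cases the dependence on the higher positions is captured by $\Delta^{(\widetilde{r})}$ or $\Delta^{(\widetilde{r}+1)}$ evaluated at $y$, which are $\PP$-a.e.\ defined since $\widetilde{r}$ and $\widetilde{r}+1$ are integers, so the two identities hold off a $\PP$-null set.

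Next I would assemble the probability. Under $\PP$ the digit $x_0$ is uniform on $\{0, \ldots, b-1\}$ and independent of the tail $y$, and the law of $y$ is again $\PP$ (its digits being i.i.d.\ uniform). Hence the event $\{\Delta^{(r)}(x) = d\}$ splits along the two disjoint events $\{x_0 + r_0 < b\}$ and $\{x_0 + r_0 \ge b\}$, on which it reduces respectively to $\{\Delta^{(\widetilde{r})}(y) = d - r_0\}$ and $\{\Delta^{(\widetilde{r}+1)}(y) = d + b - r_0\}$. Using independence together with $\PP(x_0 + r_0 < b) = (b - r_0)/b$, $\PP(x_0 + r_0 \ge b) = r_0/b$, and the identities $\PP(\Delta^{(\widetilde{r})}(y) = d - r_0) = \mu^{(\widetilde{r})}(d - r_0)$ and $\PP(\Delta^{(\widetilde{r}+1)}(y) = d + b - r_0) = \mu^{(\widetilde{r}+1)}(d + b - r_0)$, I obtain exactly \eqref{relat rec mu}. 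The degenerate case $r_0 = 0$ is consistent, the second term vanishing and the first reducing to $\mu^{(\widetilde r)}(d)$.

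I expect the only delicate point to be the bookkeeping of carries: one must check that the carry produced at position $0$ is exactly absorbed by replacing $\widetilde{r}$ with $\widetilde{r}+1$ in the tail addition, which is precisely what the rewriting $x + r = (x_0 + r_0 - b) + b(y + \widetilde{r} + 1)$ makes transparent, and that the pointwise identities hold off a $\PP$-null set so that passing to measures is legitimate. Beyond this, no uniformity in $d$ nor control of the number of digits of $\widetilde r + 1$ is needed, so the argument requires no estimates.
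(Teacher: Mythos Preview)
Your proposal is correct and follows essentially the same approach as the paper's own proof: both condition on the units digit of $x$, split into the two cases according to whether $x_0+r_0<b$ or $x_0+r_0\ge b$, identify $\Delta^{(r)}(x)$ as $r_0+\Delta^{(\widetilde r)}(y)$ or $(r_0-b)+\Delta^{(\widetilde r+1)}(y)$ respectively (with $y$ the tail of $x$), and then use that under $\PP$ the tail is independent of $x_0$ and again $\PP$-distributed. Your presentation is slightly more algebraic (via the identity $x+r=(x_0+r_0)+b(y+\widetilde r)$), but the argument is the same.
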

\begin{proof}
Let $x=(\cdots,x_1, x_0)\in \XX$. We define $\widetilde{x}:=(\cdots,x_1)$. Let us consider the computation of the digits of $x+r$, where $r:=b\widetilde{r}+r_0=\overline{r_{\ell}\cdots r_0}$

\[\begin{array}{cccccc}
        & \cdots    & x_{\ell}  & \cdots    & x_1   & x_0 \\
    +   &           & r_{\ell}  & \cdots    & r_1   & r_0 \\ \hline
    =   &           &           & \cdots    &       & (x+r)_0
\end{array}\]

If $x_0+r_0<b$, no carry is created and $\Delta^{(b\widetilde{r}+r_0)}(x)=r_0+\Delta^{(\widetilde{r})}(\widetilde{x})$. Otherwise,  $x_0+r_0\ge b$ and we have to subtract $b$ from the units digit of the result: we are left with the addition of $\widetilde{x}$ and $\widetilde{r}+1$: so $\Delta^{(b\widetilde{r}+r_0)}(x)=r_0-b+\Delta^{(\widetilde{r}+1)}(\widetilde{x})$.
To sum up, we have 
\[\Delta^{(b\widetilde{r}+r_0)}(x)=\left\{\begin{array}{ll}
     r_0+\Delta^{(\widetilde{r})}(\widetilde{x}) & \mbox{ if }x_0+r_0<b,  \\
     r_0-b+\Delta^{(\widetilde{r}+1)}(\widetilde{x}) & \mbox{ otherwise.}  \\ 
\end{array}\right.\]
Now, let $d\in\ZZ$. We partition the set $\{x\in \XX: \Delta^{(b\widetilde{r}+r_0)}(x)=d\}$ according to the value of $x_0$
\begin{align*}
    \{x\in \XX: \Delta^{(b\widetilde{r}+r_0)}(x)=d\}&=\bigcup_{j=0}^{b-r_0-1} \{x\in \XX: x_0=j \mbox{ and }\Delta^{(\widetilde{r})}(\widetilde{x})=d-r_0\} \\
    & \hspace{-1.7mm} \bigcup \bigcup_{j=b-r_0}^{b-1}\{x\in \XX: x_0=j \mbox{ and }\Delta^{(\widetilde{r}+1)}(\widetilde{x})=d+b-r_0\}.
\end{align*}
We observe that if $x$ is randomly chosen with law $\PP$, then $\widetilde{x}$ is independent of $x_0$ and also follows $\PP$. We just need to take the measure to conclude. 
\end{proof}

If we apply finitely many times \eqref{relat rec mu}, we can express $\mu^{(r)}(d)$ as a convex combination of the measures $\mu^{(0)}$ and $\mu^{(1)}$ evaluated on particular points. We recall that $\mu^{(0)}=\delta_0$ (Dirac measure on $0$).
We can also compute $\mu^{(1)}$.
\begin{lemme}
For every $d\in\ZZ$
        \[\mu^{(1)}(d):=\left\{\begin{array}{ll}
         \frac{1}{b^{k}}-\frac{1}{b^{k+1}} & \mbox{if }d=1-k(b-1) \mbox{ for some }k\in\NN  \\
         0& \mbox{otherwise.}
    \end{array}\right.
    \]
\end{lemme}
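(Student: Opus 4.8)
The plan is to read everything off the carry description \eqref{lien_delta_retenue_odometre}. For $r=1$ we have $s(1)=1$, so at every $x\in\XX$ where $\Delta^{(1)}$ is defined I would write
\[\Delta^{(1)}(x)=1-c(x)(b-1),\]
where $c(x)$ is the number of carries produced by $x+1$. The key observation I would use is that adding $1$ to $x=(\cdots,x_1,x_0)$ produces one carry for each trailing digit equal to $b-1$: a carry leaves position $i$ if and only if $x_0=\cdots=x_i=b-1$. Hence $c(x)=k$ exactly when $x_0=\cdots=x_{k-1}=b-1$ and $x_k\neq b-1$, so $c$ takes only the values $k\in\NN$ and $\Delta^{(1)}$ takes only the values $1-k(b-1)$. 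For every other $d\in\ZZ$ the set $\{x:\Delta^{(1)}(x)=d\}$ is empty, giving $\mu^{(1)}(d)=0$.

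Next I would compute $\mu^{(1)}(1-k(b-1))=\PP(\{x:c(x)=k\})$. Since under $\PP$ the digits $(x_i)$ are independent and uniform on $\{0,\cdots,b-1\}$, each event $\{x_i=b-1\}$ has probability $1/b$ independently of the others, so
\[\PP\bigl(c(x)=k\bigr)=\PP\bigl(x_0=\cdots=x_{k-1}=b-1,\ x_k\neq b-1\bigr)=\left(\frac1b\right)^{k}\left(1-\frac1b\right)=\frac{1}{b^{k}}-\frac{1}{b^{k+1}},\]
which is exactly the claimed value. I would also note that the only $x$ for which $\Delta^{(1)}$ is undefined is the constant sequence of $(b-1)$'s, a single point of $\PP$-measure zero, so it is harmless for the computation.

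I do not expect a genuine obstacle here: the only steps needing care are the identification of the carry count with the number of trailing $(b-1)$'s, and the use of the product (independence) structure of $\PP$. As a cross-check I could instead derive the formula from \eqref{relat rec mu} with $\widetilde{r}=0$ and $r_0=1$, which (using $\mu^{(0)}=\delta_0$) becomes $\mu^{(1)}(d)=\frac{b-1}{b}\mathds{1}_{\{d=1\}}+\frac1b\mu^{(1)}(d+b-1)$. Since $d\mapsto d+b-1$ sends $1-k(b-1)$ to $1-(k-1)(b-1)$, this recursion gives $\mu^{(1)}(1-k(b-1))=\frac1b\mu^{(1)}(1-(k-1)(b-1))$ for $k\ge 1$ with initial value $\mu^{(1)}(1)=\frac{b-1}{b}$, and it forces $\mu^{(1)}(d)=0$ off the progression $\{1-k(b-1):k\in\NN\}$ by iterating and using that $\mu^{(1)}$ is a bounded measure.
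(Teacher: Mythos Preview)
Your proof is correct and follows essentially the same route as the paper: both invoke \eqref{lien_delta_retenue_odometre} with $s(1)=1$ to reduce to counting trailing $(b-1)$'s, then compute the cylinder probability $\PP(x_0=\cdots=x_{k-1}=b-1,\ x_k\neq b-1)=(b-1)/b^{k+1}$. Your additional cross-check via the recursion \eqref{relat rec mu} is a nice redundancy but not needed.
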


\begin{proof}
We use again the notation $a_k^{(1)}=1-k(b-1).\\$
By \eqref{lien_delta_retenue_odometre}, it is trivial that if $d\neq a_k^{(1)}$ for all $k\in\NN$ then $\mu^{(1)}(d)=0$. Otherwise, we recall again by \eqref{lien_delta_retenue_odometre} that for all $k\in\NN$, $\Delta^{(1)}(x)=a_k^{(1)}$ if and only if the right-hand side of the expansion of $x$ is exactly a block of $(b-1)$'s of length $k$. Thus, if $k\ge 1$
\begin{align*}
    \mu^{(1)}\left(a_k^{(1)}\right)&=\PP \Big( \{x\in \XX: x_0=\cdots=x_{k-1}=b-1 \mbox{ and } x_k<b-1\} \Big)\\
                        &=\frac{b-1}{b^{k+1}}.
\end{align*}    
And, if $k=0$
    \[\mu^{(1)}\left(a_k^{(1)}\right)=\PP \Big( \{x\in \XX: x_0<b-1\} \Big)=\frac{b-1}{b}.\]
\end{proof}
\begin{figure}[H]
    \centering
    \includegraphics[scale=0.4]{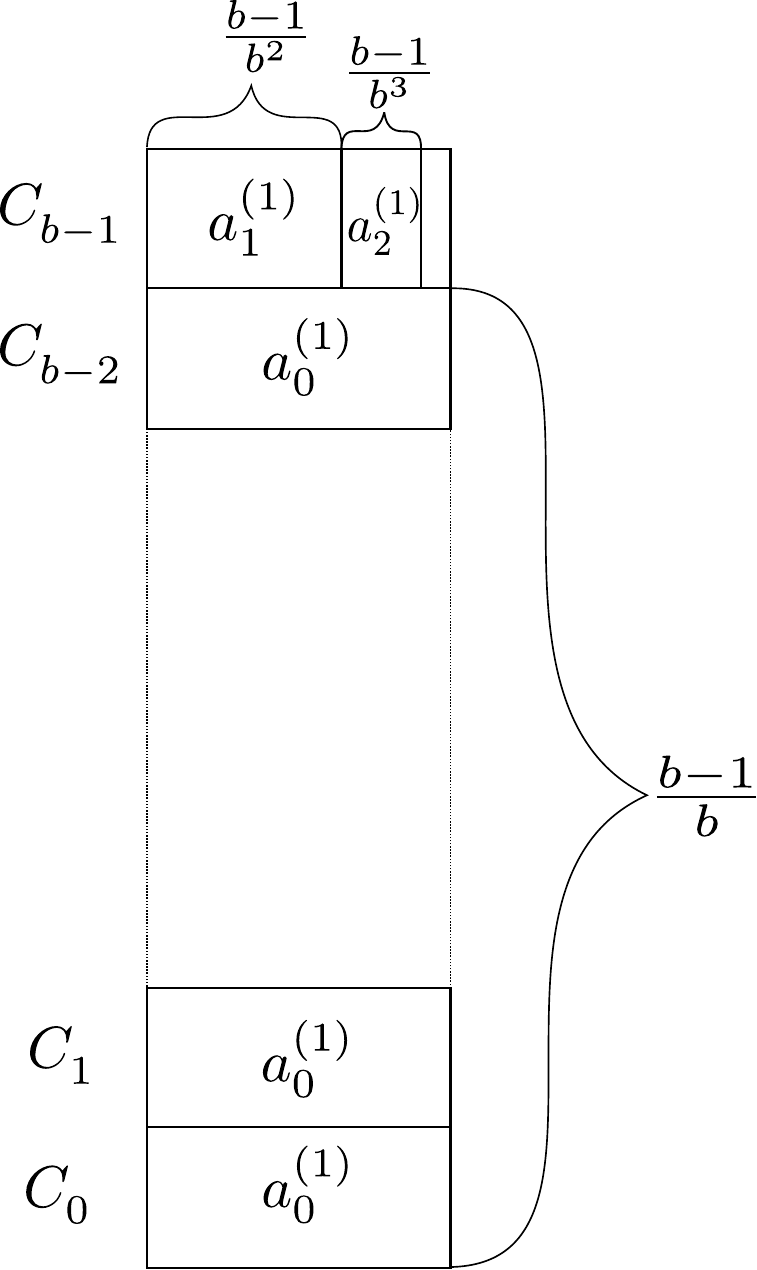}
    \caption{Values of $\Delta^{(1)}$ on the levels of the Rokhlin tower of order $0$, and the corresponding $\PP$-measures.}
    \label{tour_de_delta}
\end{figure}

        \subsection{Inductive relation on the variance, first results}
Emme and Hubert \cite[Theorem 3.1]{JEPH1} give an explicit formula for the variance in base $2$. It is possible to  adapt their methods in order to find a similar expression in any base. However, here we just need some basic estimations about the variance. We first deduce from Proposition \ref{relat rec mu} an inductive relation on the variance.
\begin{lemme}\label{lemme_relat_rec_var}
For $\widetilde{r}\in\NN$ and $0\le r_0\le b-1$, we have the relation
        \begin{align*}
            \Var(\mu^{(b\widetilde{r}+r_0)})&=\frac{b-r_0}{b}\Var(\mu^{(\widetilde{r})})+\frac{r_0}{b}\Var(\mu^{(\widetilde{r}+1)})+r_0(b-r_0).
        \end{align*}
\end{lemme}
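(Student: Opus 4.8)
The plan is to exploit the fact, established in Corollary~\ref{corollaire_prop_moment} (and in Remark~\ref{et pour 0 ?} for the degenerate case $\widetilde r=0$), that every measure $\mu^{(s)}$ is centered. Since all the measures appearing in the statement have zero mean, each variance coincides with the corresponding second moment, and it therefore suffices to transfer the inductive relation of Proposition~\ref{relat_rec_mesure} from the measures to their second moments. Concretely, I would read \eqref{relat rec mu} as a mixture decomposition: if $Z$ denotes a random variable with law $\mu^{(b\widetilde r+r_0)}$, then $Z$ is distributed as $X+r_0$ with probability $\tfrac{b-r_0}{b}$ and as $Y-(b-r_0)$ with probability $\tfrac{r_0}{b}$, where $X\sim\mu^{(\widetilde r)}$ and $Y\sim\mu^{(\widetilde r+1)}$. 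This matches \eqref{relat rec mu} termwise, since $\PP(X+r_0=d)=\mu^{(\widetilde r)}(d-r_0)$ and $\PP(Y-(b-r_0)=d)=\mu^{(\widetilde r+1)}(d+b-r_0)$.

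From there the computation is routine. Using $\Var(\mu^{(b\widetilde r+r_0)})=\EE(Z^2)$ and conditioning on the branch of the mixture, I would write
\[
\EE(Z^2)=\frac{b-r_0}{b}\,\EE\big((X+r_0)^2\big)+\frac{r_0}{b}\,\EE\big((Y-(b-r_0))^2\big).
\]
Expanding each square and invoking $\EE(X)=\EE(Y)=0$ to kill the linear terms gives $\EE((X+r_0)^2)=\Var(\mu^{(\widetilde r)})+r_0^2$ and $\EE((Y-(b-r_0))^2)=\Var(\mu^{(\widetilde r+1)})+(b-r_0)^2$. Substituting and collecting the two constant contributions,
\[
\frac{b-r_0}{b}\,r_0^2+\frac{r_0}{b}\,(b-r_0)^2=\frac{r_0(b-r_0)}{b}\big(r_0+(b-r_0)\big)=r_0(b-r_0),
\]
which produces exactly the claimed identity.

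There is essentially no genuine obstacle: the result is a one-line consequence of Proposition~\ref{relat_rec_mesure} once centering is invoked, the only mechanism being the vanishing of the cross terms thanks to zero mean. The one point deserving a word of care is the edge case $\widetilde r=0$, where $\mu^{(\widetilde r)}=\mu^{(0)}=\delta_0$ has both mean and variance zero; the argument remains valid there, and the formula degenerates correctly, since when $r_0=0$ the $\mu^{(\widetilde r+1)}$ term and the constant $r_0(b-r_0)$ both vanish and we recover $\Var(\mu^{(b\widetilde r)})=\Var(\mu^{(\widetilde r)})$, reflecting that appending a trailing $0$ to the expansion of $\widetilde r$ does not change the distribution.
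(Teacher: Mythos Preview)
Your argument is correct and follows essentially the same route as the paper: both proofs use Proposition~\ref{relat_rec_mesure} to write $\mu^{(b\widetilde r+r_0)}$ as a two-component mixture, compute the second moment via this decomposition, and invoke the zero-mean property \eqref{esperance_nulle} to eliminate the cross terms before collecting the constant contributions into $r_0(b-r_0)$. The only cosmetic difference is that you phrase the computation in terms of random variables $X,Y,Z$ and conditioning on the mixture branch, whereas the paper works directly with the sums $\sum_{d\in\ZZ} d^2\mu^{(\cdot)}(d)$ and performs an index shift; these are the same calculation.
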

\begin{proof}
We compute using \eqref{esperance_nulle} and \eqref{relat rec mu}
\begin{align*}
    \Var(\mu^{(b\widetilde{r}+r_0)})  &=\sum_{d\in\ZZ} d^2 \mu^{(b\widetilde{r}+r_0)}(d)\\
                    &=\sum_{d\in\ZZ} d^2 \left(\frac{b-r_0}{b} \mu^{(\widetilde{r})} (d-r_0) + \frac{r_0}{b} \mu^{(\widetilde{r}+1)}(d+b-r_0)\right)\\
                    &=\frac{b-r_0}{b}\sum_{d'\in\ZZ}(d'+r_0)^2\mu^{(\widetilde{r})} (d') +\frac{r_0}{b} \sum_{d'\in\ZZ} (d'+b-r_0)^2 \mu^{(\widetilde{r}+1)}(d')
\end{align*}
Using again \eqref{esperance_nulle}, we get
\begin{align*}
    \Var(\mu^{(b\widetilde{r}+r_0)}) &= \frac{b-r_0}{b} \left(\Var(\mu^{(\widetilde{r})})+r_0^2 \right) + \frac{r_0}{b}\left(\Var(\mu^{(\widetilde{r}+1)})+(b-r_0)^2\right)\\
                    &= \frac{b-r_0}{b} \Var(\mu^{(\widetilde{r})}) + \frac{r_0}{b} \Var ( \mu^{(\widetilde{r}+1)}) + r_0(b-r_0).
\end{align*}
\end{proof}
Even if we do not need an explicit formula of $\Var(\mu^{(r)})$ for a general $r\in\NN$, we will need one in some specific cases. First, we are interested in the variance of $\mu^{(r)}$ when the expansion of $r$ is one digit long.

\begin{lemme}\label{formule_de_variance1} 
If $0\le r\le b-1$ then 
            \[\Var(\mu^{(r)})=r(1+b-r).\]
\end{lemme}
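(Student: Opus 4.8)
The plan is to compute $\Var(\mu^{(r)})$ for a single-digit $r$, i.e.\ $0\le r\le b-1$, directly from the definition as the second moment, exploiting the fact that when $r<b$ the addition $n+r$ involves only the units digit in a controlled way. By Corollary~\ref{corollaire_prop_moment} we know $\mu^{(r)}$ has zero mean, so $\Var(\mu^{(r)})=\sum_{d\in\ZZ} d^2\,\mu^{(r)}(d)=\int_{\XX}(\Delta^{(r)})^2\,\mathrm{d}\PP$. For $0\le r\le b-1$ we have $\widetilde r=0$, hence by Proposition~\ref{relat_rec_mesure} (or directly by the case analysis in its proof) the value of $\Delta^{(r)}(x)$ depends only on the units digit $x_0$: if $x_0+r<b$ then $\Delta^{(r)}(x)=r+\Delta^{(0)}(\widetilde x)=r$ (since $\mu^{(0)}=\delta_0$), while if $x_0+r\ge b$ then $\Delta^{(r)}(x)=r-b+\Delta^{(1)}(\widetilde x)$.

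First I would identify the two resulting pieces of the measure. Since $x_0$ is uniform on $\{0,\dots,b-1\}$ and independent of $\widetilde x$, the event $x_0+r<b$ has probability $\frac{b-r}{b}$ and contributes the value $d=r$; the complementary event $x_0+r\ge b$ has probability $\frac{r}{b}$ and contributes $d=r-b+\Delta^{(1)}(\widetilde x)$, where $\Delta^{(1)}(\widetilde x)$ is distributed according to $\mu^{(1)}$. This is exactly the specialization of \eqref{relat rec mu} at $\widetilde r=0$:
\[
\mu^{(r)}(d)=\frac{b-r}{b}\,\delta_0(d-r)+\frac{r}{b}\,\mu^{(1)}(d+b-r).
\]

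Second, I would feed this into the second-moment computation. The cleanest route is to invoke the inductive relation on the variance, Lemma~\ref{lemme_relat_rec_var}, with $\widetilde r=0$ and $r_0=r$: since $\Var(\mu^{(0)})=\Var(\delta_0)=0$, it gives
\[
\Var(\mu^{(r)})=\frac{b-r}{b}\cdot 0+\frac{r}{b}\,\Var(\mu^{(1)})+r(b-r).
\]
So everything reduces to knowing $\Var(\mu^{(1)})$. Using the explicit formula for $\mu^{(1)}$ from the preceding lemma, with values $a_k^{(1)}=1-k(b-1)$ carrying mass $\frac{b-1}{b^{k+1}}$, one computes $\Var(\mu^{(1)})=\sum_{k\ge 0}(1-k(b-1))^2\frac{b-1}{b^{k+1}}$; this is an elementary geometric-type sum (it evaluates via the standard identities $\sum_k x^k$, $\sum_k kx^k$, $\sum_k k^2x^k$ with $x=1/b$) and should yield $\Var(\mu^{(1)})=b$.

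Substituting $\Var(\mu^{(1)})=b$ into the displayed relation gives $\Var(\mu^{(r)})=\frac{r}{b}\cdot b + r(b-r)=r+r(b-r)=r(1+b-r)$, which is the claim. The only genuinely computational step is the evaluation of $\Var(\mu^{(1)})$; I expect that to be the main obstacle, though it is routine once the series identities are applied carefully. As an alternative that avoids citing Lemma~\ref{lemme_relat_rec_var}, I could compute $\Var(\mu^{(r)})$ directly: the mass $\frac{b-r}{b}$ sits at $d=r$ and the remaining mass $\frac{r}{b}$ is a shifted copy of $\mu^{(1)}$ centered at $r-b+1$; expanding $\sum d^2\mu^{(r)}(d)$, using zero-mean to cancel the linear cross terms, reproduces the same arithmetic. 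Either way the final simplification is immediate.
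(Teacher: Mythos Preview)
Your proposal is correct and follows essentially the same structure as the paper: apply Lemma~\ref{lemme_relat_rec_var} with $\widetilde r=0$ and $r_0=r$, reduce to knowing $\Var(\mu^{(1)})$, and then substitute. The one genuine difference lies in how $\Var(\mu^{(1)})=b$ is obtained. You compute it directly from the explicit law of $\mu^{(1)}$ via the geometric-type series $\sum_{k\ge 0}(1-k(b-1))^2\frac{b-1}{b^{k+1}}$; this works and indeed evaluates to $b$. The paper instead applies Lemma~\ref{lemme_relat_rec_var} once more at $r=1$ (so again $\widetilde r=0$, $r_0=1$), which yields the self-referential linear equation
\[
\Var(\mu^{(1)})=\frac{b-1}{b}\Var(\mu^{(0)})+\frac{1}{b}\Var(\mu^{(1)})+(b-1),
\]
solved immediately to give $\Var(\mu^{(1)})=b$. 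The paper's trick is slicker and avoids any series manipulation, while your route has the minor advantage of not relying on the recursion a second time; either way the remaining substitution is identical.
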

\begin{proof}
 We recall that $\mu^{(0)}=\delta_0$ so $\Var(\mu^{(0)})=0$. Then, if $r=1$, Lemma~\ref{lemme_relat_rec_var} gives
            \[ \Var(\mu^{(1)})=\frac{b-1}{b}\Var(\mu^{(0)})+\frac{1}{b}\Var(\mu^{(1)})+(b-1).\]
It follows
    \[\Var(\mu^{(1)})=b.\]
Finally, if $2\le r\le b-1$, again from Lemma \ref{lemme_relat_rec_var}, we have
            \[\Var(\mu^{(r)})=\frac{b-r}{b} \Var(\mu^{(0)})+\frac{r}{b}\Var(\mu^{(1)})+r(b-r)=r(1+b-r).\] 
\end{proof}

%\begin{lemme}\label{formule_de_variance2}
%If, for $n\ge 1$, $r=b^n-1$ then 
%           \[\Var(\mu^{(r)})=2b-\frac{2}{b^{n-1}}.\]
%\end{lemme}
%\begin{proof}
%  We show by induction.
%    \begin{enumerate}
%        \item If $n=1$, thanks to Lemma \ref{formule_de_variance1}, we have
%        \[\Var(\mu^{(b-1)})=2(b-1)=2b-\frac{2}{b^0}.\]
%        \item If we assume that $\Var(\mu^{(b^{n-1}-1)})=2b-\frac{2}{b^{n-2}}$, it follows from Lemma \ref{lemme_relat_rec_var}
%    \[\Var(\mu^{(b^n-1)})= \frac{1}{b}\Var(\mu^{(b^{n-1}-1)})+\frac{b-1}{b} \Var ( \mu^{(b^{n-1})} ) + (b-1).\]
%    Always thanks to Lemma \ref{lemme_relat_rec_var}, we have 
%    \[\Var(\mu^{(b^n)})=\Var(\mu^{(1)})=b.\]
%    And, using the induction hypothesis
%    \[Var(\mu^{(b^n-1)})=\frac{1}{b}(2b-\frac{2}{b^{n-2}})+2(b-1)=2b-\frac{2}{b^{n-1}}.\]
%    \item Since both the base case and the inductive step have been proved as true, the statement holds for every $n\ge 1$.
%    \end{enumerate}
%\end{proof}
Now, we are interested in the variance of $\mu^{(r)}$ when the expansion of $r$ has a rightmost block of $(b-1)$'s of length $m\ge 1$ that is to say when there exists $\widehat{r}\in\NN$ such that $r=b^m\widehat{r}+b^m-1$.
\begin{lemme}\label{formule_de_variance4}
    For $\widehat{r}\in\NN$ and $m\ge 1$, the variance of $\mu^{(b^m\widehat{r}+b^m-1)}$ is 
    \[\Var(\mu^{(b^m\widehat{r}+b^m-1)})=\frac{1}{b^{m}}\Var(\mu^{(\widehat{r})})+\left(1-\frac{1}{b^{m}}\right)\Var(\mu^{(\widehat{r}+1)})+b-\frac{1}{b^{m-1}}.\]
\end{lemme}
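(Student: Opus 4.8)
The plan is to strip the rightmost block of $(b-1)$'s one digit at a time by repeatedly applying the variance recursion of Lemma~\ref{lemme_relat_rec_var}. Write $r = b^m\widehat{r}+b^m-1 = b^m(\widehat{r}+1)-1$; since $r\equiv b-1 \pmod b$, the units digit is $r_0=b-1$, and the reduced integer is $\widetilde{r}=b^{m-1}\widehat{r}+b^{m-1}-1$, with $\widetilde{r}+1=b^{m-1}(\widehat{r}+1)$. Thus $\widetilde{r}$ has exactly the same shape as $r$ with $m$ decreased by one, while $\widetilde{r}+1$ is simply $\widehat{r}+1$ followed by $m-1$ trailing zeros. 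This self-similarity is what makes the induction close.

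First I would record the auxiliary fact that appending trailing zeros does not change the variance: taking $r_0=0$ in Lemma~\ref{lemme_relat_rec_var} gives $\Var(\mu^{(bs)})=\Var(\mu^{(s)})$ for every $s$, and iterating yields $\Var(\mu^{(b^ks)})=\Var(\mu^{(s)})$ for all $k\ge 0$. In particular $\Var(\mu^{(\widetilde{r}+1)})=\Var(\mu^{(b^{m-1}(\widehat{r}+1))})=\Var(\mu^{(\widehat{r}+1)})$, so the second term produced by the recursion is frozen at $\Var(\mu^{(\widehat{r}+1)})$ independently of $m$.

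Set $V_m:=\Var(\mu^{(b^m\widehat{r}+b^m-1)})$. Applying Lemma~\ref{lemme_relat_rec_var} with $r_0=b-1$ and using the previous paragraph turns the problem into the one-term affine recurrence
\[
V_m=\frac{1}{b}V_{m-1}+\frac{b-1}{b}\,\Var(\mu^{(\widehat{r}+1)})+(b-1),\qquad m\ge 1,
\]
with initial value $V_0=\Var(\mu^{(\widehat{r})})$ (note $b^0\widehat{r}+b^0-1=\widehat{r}$). Its fixed point is $V^\ast=\Var(\mu^{(\widehat{r}+1)})+b$, and subtracting it gives $V_m-V^\ast=b^{-m}(V_0-V^\ast)$; substituting back and simplifying (using $b\cdot b^{-m}=b^{-(m-1)}$) reproduces exactly the claimed formula. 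Equivalently, one can verify the formula directly by induction on $m$: the base case $m=1$ is Lemma~\ref{lemme_relat_rec_var} verbatim, and the inductive step is the same substitution.

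There is no serious obstacle here; the only point requiring a moment's thought is the structural identification of $\widetilde{r}$ and $\widetilde{r}+1$ — recognizing that peeling one digit reproduces the same configuration with $m-1$ and that $\widetilde{r}+1$ carries only trailing zeros — after which everything collapses to solving a scalar affine recurrence. The one thing to watch is the bookkeeping of the constant term, where the telescoping of the geometric factors must reproduce $b-b^{-(m-1)}$ rather than $b-b^{-m}$.
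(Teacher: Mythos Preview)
Your proof is correct and follows essentially the same route as the paper: peel off one rightmost digit $(b-1)$ via Lemma~\ref{lemme_relat_rec_var}, use that trailing zeros leave the variance unchanged so that $\Var(\mu^{(\widetilde{r}+1)})=\Var(\mu^{(\widehat{r}+1)})$, and then induct on $m$. The only cosmetic difference is that you solve the resulting affine recurrence $V_m=\tfrac{1}{b}V_{m-1}+\tfrac{b-1}{b}\Var(\mu^{(\widehat{r}+1)})+(b-1)$ in closed form via its fixed point, whereas the paper verifies the formula directly by substitution at the inductive step; the computations are identical.
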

\begin{remarque} 
We observe that, if we take $\widehat{r}=0$, then we find the variance of $\mu^{(r)}$ where the expansion of $r$ is composed of only one block of $(b-1)$'s.  Thus, with Lemma~\ref{formule_de_variance1}, we now have the exact value of the variance of $\mu^{(r)}$ when the expansion of $r$ is composed of exactly $1$ non-zero block: 
\begin{align}\label{variance pour un bloc}
   \Var(\mu^{(r)}) =&\left\{\begin{array}{ll}
         r(1+b-r) & \mbox{if } r=1,\cdots, b-2, \\
         2b-\frac{2}{b^{m-1}}& \mbox{if } r=b^m-1 \hspace{1 mm}(m\ge 1). \\
    \end{array}\right.
\end{align}
\end{remarque}
\begin{proof}
    We prove the lemma by induction on $m\ge 1$.
        \begin{enumerate}
        \item If $m=1$ then, from Lemma \ref{lemme_relat_rec_var}
        \begin{align*}
            \Var(\mu^{(b\widehat{r}+b-1)})&=\frac{1}{b} \Var(\mu^{(\widehat{r})})+\frac{b-1}{b}\Var(\mu^{(\widehat{r}+1)})+(b-1).
        \end{align*}
        That is what we want.
        \item If we assume that the lemma is true for $m-1$ then we consider the integer $b^m\widehat{r}+b^m-1$. We observe the trivial identity 
        \[b^m\widehat{r}+b^m-1=b\times\left(b^{m-1}\widehat{r}+\left(b^{m-1}-1\right)\right)+(b-1)\]
        It follows from Lemma~\ref{lemme_relat_rec_var}
        \begin{align*}
            \Var(\mu^{(b^m\widehat{r}+b^m-1)})&=\frac{1}{b} \Var\left(\mu^{\left(b^{m-1}\widehat{r}+\left(b^{m-1}-1\right)\right)}\right)+\frac{b-1}{b}\Var(\mu^{(b^{m-1}(\widehat{r}+1))})\\
            &\hspace{5mm}+(b-1).
        \end{align*}
        We use the induction hypothesis and the fact that $\mu^{(b^{m-1}(\widehat{r}+1))}=\mu^{(\widehat{r}+1)}$ (Lemma~\ref{relat rec mu})
        \begin{align*}
            \Var\left(\mu^{\left(b^{m-1}\widehat{r}+\left(b^{m-1}-1\right)\right)}\right)&=\frac{1}{b^{m-1}}\Var(\mu^{(\widehat{r})})+\left(1-\frac{1}{b^{m-1}}\right)\Var(\mu^{(\widehat{r}+1)})\\
            &\hspace{5mm} +b-\frac{1}{b^{m-2}}.
        \end{align*}
        Combining both gives the result for $m$.
    \end{enumerate}
\end{proof}

Finally, we consider the case where the expansion of $r$ has a units digit $1$, possibly with a block of $0$'s on its left. This corresponds to the existence of $\widehat{r}\in\NN$ and $m\ge 1$ such that $r=b^m\widehat{r}+1$.

\begin{lemme}\label{formule_de_variance3}
For $\widehat{r}\in\NN$ and $m\ge 1$, the variance of $\mu^{(b^m\widehat{r}+1)}$ is 
\[\Var(\mu^{(b^m\widehat{r}+1)})=\left(1-\frac{1}{b^m}\right)\Var(\mu^{(\widehat{r})})+\frac{1}{b^m}\Var(\mu^{\widehat{r}+1)})+b-\frac{1}{b^{m-1}}. \]
\end{lemme}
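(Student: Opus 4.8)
The plan is to prove the identity by induction on $m\ge 1$, exactly as in the proof of Lemma~\ref{formule_de_variance4}, but peeling off the units digit $1$ instead of a trailing $(b-1)$. The two ingredients are the inductive relation on the variance (Lemma~\ref{lemme_relat_rec_var}) and the elementary fact that appending a trailing zero leaves the measure unchanged, i.e.\ $\mu^{(b\widetilde{r})}=\mu^{(\widetilde{r})}$; the latter follows from Proposition~\ref{relat_rec_mesure} with $r_0=0$, since then the weight $\frac{r_0}{b}$ vanishes and the weight $\frac{b-r_0}{b}$ equals $1$.

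For the base case $m=1$, I would apply Lemma~\ref{lemme_relat_rec_var} directly to $r=b\widehat{r}+1$ with $r_0=1$ and $\widetilde{r}=\widehat{r}$, which yields
\[\Var(\mu^{(b\widehat{r}+1)})=\frac{b-1}{b}\Var(\mu^{(\widehat{r})})+\frac{1}{b}\Var(\mu^{(\widehat{r}+1)})+(b-1).\]
Since $1-\frac{1}{b}=\frac{b-1}{b}$ and $b-\frac{1}{b^{0}}=b-1$, this is precisely the asserted formula at $m=1$.

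For the inductive step, fix $m\ge 2$ and assume the formula at level $m-1$. I would use the factorization $b^m\widehat{r}+1=b\cdot(b^{m-1}\widehat{r})+1$ and apply Lemma~\ref{lemme_relat_rec_var} with $r_0=1$ and $\widetilde{r}=b^{m-1}\widehat{r}$, so that $\widetilde{r}+1=b^{m-1}\widehat{r}+1$. This gives
\[\Var(\mu^{(b^m\widehat{r}+1)})=\frac{b-1}{b}\Var(\mu^{(b^{m-1}\widehat{r})})+\frac{1}{b}\Var(\mu^{(b^{m-1}\widehat{r}+1)})+(b-1).\]
The first term simplifies through $\mu^{(b^{m-1}\widehat{r})}=\mu^{(\widehat{r})}$, and the decisive observation is that the second term $\Var(\mu^{(b^{m-1}\widehat{r}+1)})$ is an instance of the formula at level $m-1$, to which the induction hypothesis applies. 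Substituting that hypothesis and regrouping, the coefficient of $\Var(\mu^{(\widehat{r})})$ becomes $\frac{b-1}{b}+\frac{1}{b}\bigl(1-\frac{1}{b^{m-1}}\bigr)=1-\frac{1}{b^m}$, the coefficient of $\Var(\mu^{(\widehat{r}+1)})$ becomes $\frac{1}{b}\cdot\frac{1}{b^{m-1}}=\frac{1}{b^m}$, and the constant becomes $\frac{1}{b}\bigl(b-\frac{1}{b^{m-2}}\bigr)+(b-1)=b-\frac{1}{b^{m-1}}$, which closes the induction.

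I do not expect a genuine obstacle here: the argument is a single-step induction whose only content is an elementary regrouping of three terms. The sole points demanding care are the bookkeeping of the powers of $b$ coming from the induction hypothesis — notably the constant $b-\frac{1}{b^{m-2}}$ at level $m-1$ — and the justification of $\mu^{(b\widetilde{r})}=\mu^{(\widetilde{r})}$ from Proposition~\ref{relat_rec_mesure}.
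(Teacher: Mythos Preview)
Your proposal is correct and follows essentially the same argument as the paper's own proof: induction on $m$, with the base case coming directly from Lemma~\ref{lemme_relat_rec_var} applied with $r_0=1$, and the inductive step writing $b^m\widehat{r}+1=b\cdot(b^{m-1}\widehat{r})+1$, invoking $\mu^{(b^{m-1}\widehat{r})}=\mu^{(\widehat{r})}$ and the hypothesis at level $m-1$. Your explicit regrouping of the three coefficients is more detailed than the paper's ``combining both gives the result'', but the content is identical.
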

\begin{proof}
     We show by induction on $m\ge 1$.
    \begin{enumerate}
        \item If $m=1$ then, from Lemma \ref{lemme_relat_rec_var}
        \begin{align*}
            \Var(\mu^{(b\widehat{r}+1)})&=\frac{b-1}{b} \Var(\mu^{(\widehat{r})})+\frac{1}{b}\Var(\mu^{(\widehat{r}+1)})+(b-1).
        \end{align*}
        That is exactly what we want.
        \item If we assume that the formula is true for $m-1$, then we consider the integer $b^m\widehat{r}+1$. We have again by Lemma \ref{lemme_relat_rec_var}
        \begin{align*}
            \Var(\mu^{(b^m\widehat{r}+1)})&=\Var(\mu^{(b\times b^{m-1}\widehat{r}+1)})\\
            &=\frac{b-1}{b} \Var(\mu^{(b^{m-1}\widehat{r})})+\frac{1}{b}\Var(\mu^{(b^{m-1}\widehat{r}+1)})+(b-1).
        \end{align*}
        We use the induction hypothesis.
        \[\Var(\mu^{(b^{m-1}\widehat{r}+1)})=\left(1-\frac{1}{b^{m-1}}\right)\Var(\mu^{(\widehat{r})})+\frac{1}{b^{m-1}}\Var(\mu^{(\widehat{r}+1)})+b-\frac{1}{b^{m-2}}.\]
        Combining both gives the result for $m$.
    \end{enumerate}
\end{proof}

        \subsection{Upper and lower bound of the variance}\label{upper and lower bound of the variance}
Since $\Var(\mu^{(0)})=0$, the case $r=0$ is irrelevant and we suppose $r\ge 1$. We wish to find an upper and a lower bound of the variance of $\mu^{(r)}$ depending on $\rho(r)$, the number of blocks of $r$ defined in Definition \ref{defblock}. For convenient reasons, it is better to think in terms of non-zero blocks. We define, for an integer $r$, the quantity $\lambda(r)$ which corresponds to the number of non-zero blocks.
\begin{exemple} We use again the example of Figure \ref{blocktranslation}.
    \begin{figure}[H]
        \centering
        \includegraphics[scale=0.35]{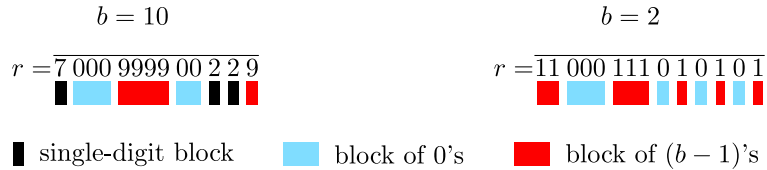}
        \caption{On the left-hand side, $\rho(r)=7$ and $\lambda(r)=5$. On the right-hand side, $\rho(r)=9$ and $\lambda(r)=5$.}
    \end{figure}
\end{exemple}
Of course, there is a relation between $\lambda(r)$ and $\rho(r)$ (we recall that $r\ge 1$):
\begin{align}\label{lien rho lambda}
    \lambda(r)\le \rho(r)\le 2\lambda(r).
\end{align}
We first give an upper bound of $\Var(\mu^{(r)})$ depending on $\lambda(r)$.
\begin{prop}\label{majoration}
For any $r\ge 1$ 
\begin{align}\label{majoration de la variance}
    \Var(\mu^{(r)})\le b^2  \lambda(r).
\end{align}
\end{prop}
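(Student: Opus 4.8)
The plan is to establish \eqref{majoration de la variance} by strong induction on $\lambda(r)$, peeling off the rightmost non-zero block of $r$ and feeding the resulting smaller integers into the inductive relations of Lemmas~\ref{lemme_relat_rec_var}, \ref{formule_de_variance4} and \ref{formule_de_variance3}. Appending trailing zeros to $r$ changes neither $\mu^{(r)}$ (this is \eqref{relat rec mu} with $r_0=0$) nor $\lambda(r)$, so I may always assume that the units digit of $r$ is non-zero.

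The crucial preliminary observation concerns how the carry terms of the form $\widehat r+1$ appearing in those relations behave with respect to block-counting. I would first prove, by induction on the value of the integer, that $\lambda(n+1)\le\lambda(n)+1$ for every $n\ge 0$, and, more precisely, that $\lambda(n+1)\le\lambda(n)$ whenever the units digit of $n$ is non-zero. The point is that the only way adding $1$ can create an extra non-zero block is by turning the end of a block of $0$'s into an isolated $1$, which requires a vanishing units digit; if $n$ ends in a block of $(b-1)$'s the carry propagates, one reduces to a strictly smaller integer, and the induction hypothesis applies.

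With this in hand, the base case $\lambda(r)=1$ is immediate: $r$ is a single non-zero block, and Lemma~\ref{formule_de_variance1} together with the Remark following Lemma~\ref{formule_de_variance4} give either $\Var(\mu^{(r)})=c(1+b-c)\le b^2$ (a routine maximization, since $c(1+b-c)\le (b+1)^2/4\le b^2$) or $\Var(\mu^{(r)})=2b-2b^{1-m}\le 2b\le b^2$. For the inductive step I would first isolate a single sub-claim handling every carry term at once: if $\lambda(n)\le\lambda(r)-1$ then $\Var(\mu^{(n+1)})\le b^2(\lambda(r)-1)+b$. Indeed, if the units digit of $n$ is non-zero this follows from the preliminary lemma and the induction hypothesis; otherwise $n=ab^s$ with $s\ge 1$ and $a$ having non-zero units digit, and Lemma~\ref{formule_de_variance3} writes $\Var(\mu^{(n+1)})$ as a convex combination of $\Var(\mu^{(a)})$ and $\Var(\mu^{(a+1)})$ plus $b-b^{1-s}\le b$, both variances being bounded by $b^2(\lambda(r)-1)$ through the preliminary lemma (applied to $a$) and the induction hypothesis.

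Finally I would split the inductive step according to the rightmost non-zero block of $r$, the two types being exhaustive. If it is a single digit $c\in\{1,\dots,b-2\}$, Lemma~\ref{lemme_relat_rec_var} gives $\Var(\mu^{(r)})=\tfrac{b-c}{b}\Var(\mu^{(\widehat r)})+\tfrac{c}{b}\Var(\mu^{(\widehat r+1)})+c(b-c)$ with $\widehat r=\lfloor r/b\rfloor$ and $\lambda(\widehat r)=\lambda(r)-1$; combining the induction hypothesis on $\widehat r$ with the sub-claim on $\widehat r+1$ yields $\Var(\mu^{(r)})\le b^2(\lambda(r)-1)+c(1+b-c)\le b^2\lambda(r)$. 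If it is a block of $(b-1)$'s of length $m$, Lemma~\ref{formule_de_variance4} plays the same role with $\widehat r=\lfloor r/b^m\rfloor$, giving $\Var(\mu^{(r)})\le b^2(\lambda(r)-1)+2b\le b^2\lambda(r)$. The delicate point throughout is exactly the large-weight carry term $\widehat r+1$ in Lemma~\ref{formule_de_variance4}: the naive bound $\Var(\mu^{(\widehat r+1)})\le b^2\lambda(\widehat r+1)$ is too weak when $\widehat r$ ends in zeros, in which case $\lambda(\widehat r+1)=\lambda(r)$, and this is precisely why the sub-claim, via Lemma~\ref{formule_de_variance3}, is needed to absorb that intervening block of zeros and recover a bound of order $b^2(\lambda(r)-1)$.
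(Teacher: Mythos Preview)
Your argument is correct, but it follows a genuinely different route from the paper's. The paper first establishes the comparison estimate of Lemma~\ref{comparaison}, namely $|\Var(\mu^{(r+1)})-\Var(\mu^{(r)})|\le b$, by a short induction on the number of digits, and then handles the inductive step of Proposition~\ref{majoration} via telescoping identities: for a rightmost block of $(b-1)$'s it uses $\mu^{(r+1)}=\mu^{(\widehat r+1)}$ together with two applications of Lemma~\ref{comparaison}, and for a rightmost single-digit block $r_0$ it telescopes $\Var(\mu^{(b\widehat r+k)})-\Var(\mu^{(b\widehat r+k-1)})$ over $k=1,\dots,r_0$. Your approach bypasses Lemma~\ref{comparaison} entirely, replacing it with a combinatorial control on $\lambda(n+1)$ and a direct appeal to Lemmas~\ref{lemme_relat_rec_var}, \ref{formule_de_variance4} and \ref{formule_de_variance3}; your sub-claim is precisely what absorbs the awkward carry term $\widehat r+1$ when $\widehat r$ ends in zeros. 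In effect you have transplanted the case-analysis structure that the paper uses for the \emph{lower} bound (Proposition~\ref{minoration}) to the upper bound. The paper's route is slightly shorter once Lemma~\ref{comparaison} is in hand, since the telescoping avoids any discussion of whether $\widehat r$ has a trailing block of zeros; your route is more self-contained in that it uses only the exact recursive formulas already available and never needs the comparison lemma at all.
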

We need the following lemma.
\begin{lemme}\label{comparaison}
For $r\ge 1$, we have the following inequality
    \[\lvert \Var(\mu^{(r+1)})-\Var(\mu^{(r)})\rvert\le b.\]
\end{lemme}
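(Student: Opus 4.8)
The plan is to prove the bound by strong induction on $r$, using only the inductive relation on the variance from Lemma~\ref{lemme_relat_rec_var}. The key idea is that, writing $r = b\widetilde{r} + r_0$ with $\widetilde{r} = \lfloor r/b \rfloor$ and $0 \le r_0 \le b-1$, the quantity $\Var(\mu^{(r+1)}) - \Var(\mu^{(r)})$ can be re-expressed in terms of $\Var(\mu^{(\widetilde{r}+1)}) - \Var(\mu^{(\widetilde{r})})$ — the same kind of consecutive difference, but at the strictly shorter integer $\widetilde{r} < r$ — carrying a contracting factor $1/b$, plus an explicit bounded constant. This contraction is exactly what allows the induction to close with the uniform bound $b$ rather than with a bound that grows.

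The inductive step splits into two cases according to the units digit $r_0$. When $r_0 \le b-2$, adding $1$ triggers no carry, so $r+1 = b\widetilde{r} + (r_0+1)$ has the same $\widetilde{r}$; applying Lemma~\ref{lemme_relat_rec_var} to both $r$ and $r+1$ and subtracting, the $\Var(\mu^{(\widetilde{r})})$ and $\Var(\mu^{(\widetilde{r}+1)})$ contributions recombine into $\tfrac1b\bigl(\Var(\mu^{(\widetilde{r}+1)}) - \Var(\mu^{(\widetilde{r})})\bigr)$, while the terms $r_0(b-r_0)$ produce the constant $b-1-2r_0$, whose absolute value is at most $b-1$. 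When $r_0 = b-1$, adding $1$ propagates a carry and $r+1 = b(\widetilde{r}+1)$, so Lemma~\ref{lemme_relat_rec_var} gives $\Var(\mu^{(r+1)}) = \Var(\mu^{(\widetilde{r}+1)})$ (appending a zero leaves $\mu$ unchanged); subtracting the expression for $\Var(\mu^{(r)})$ then leaves $\tfrac1b\bigl(\Var(\mu^{(\widetilde{r}+1)}) - \Var(\mu^{(\widetilde{r})})\bigr) - (b-1)$. In both cases the difference takes the form $\tfrac1b\bigl(\Var(\mu^{(\widetilde{r}+1)}) - \Var(\mu^{(\widetilde{r})})\bigr) + c$ with $|c| \le b-1$.

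To finish, the base case $\widetilde{r} = 0$ is $\lvert\Var(\mu^{(1)}) - \Var(\mu^{(0)})\rvert = \lvert b - 0\rvert = b$, using $\Var(\mu^{(0)})=0$ and $\Var(\mu^{(1)})=b$ from Lemma~\ref{formule_de_variance1}; and since $\widetilde{r} < r$ the induction hypothesis bounds $\lvert\Var(\mu^{(\widetilde{r}+1)}) - \Var(\mu^{(\widetilde{r})})\rvert$ by $b$, so the first term is at most $1$ in absolute value and the triangle inequality yields $1 + (b-1) = b$. The step demanding the most care is the carry case $r_0 = b-1$: there the individual variances $\Var(\mu^{(\widetilde{r})})$ and $\Var(\mu^{(\widetilde{r}+1)})$ are themselves unbounded (they grow with the number of blocks), so one must verify that they genuinely telescope into the single bounded difference weighted by $1/b$, leaving no stray unbounded term. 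A tempting alternative would be to expand $\Delta^{(r+1)} = \Delta^{(1)} + \Delta^{(r)}\circ T$ from \eqref{decomp_sum}, giving $\Var(\mu^{(r+1)}) - \Var(\mu^{(r)}) = b + 2\,\EE\bigl[\Delta^{(1)}\cdot(\Delta^{(r)}\circ T)\bigr]$; but controlling this correlation term directly is delicate, which is why I would favor the purely inductive route through Lemma~\ref{lemme_relat_rec_var}.
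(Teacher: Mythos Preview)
Your proposal is correct and follows essentially the same approach as the paper: both arguments derive the identity
\[
\Var(\mu^{(r+1)})-\Var(\mu^{(r)})=\frac{1}{b}\bigl(\Var(\mu^{(\widetilde{r}+1)})-\Var(\mu^{(\widetilde{r})})\bigr)+c,\qquad |c|\le b-1,
\]
by splitting on whether $r_0=b-1$ or $r_0\le b-2$ and applying Lemma~\ref{lemme_relat_rec_var}, then close the induction via $\tfrac{b}{b}+(b-1)=b$ with the base value $\lvert\Var(\mu^{(1)})-\Var(\mu^{(0)})\rvert=b$. The only cosmetic difference is that the paper phrases the induction on the number of digits of $r$ rather than on $r$ itself.
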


\begin{proof}
We recall that $r_0\in\{0, \cdots,b-1\}$ is the units digit of $r$ and the relation $r=b\widetilde{r}+r_0$. Let us prove
\[\lvert\Var(\mu^{(r+1)})-\Var(\mu^{(r)})\rvert\le \frac{\lvert\Var(\mu^{(\widetilde{r}+1)})-\Var(\mu^{(\widetilde{r})})\rvert}{b}+ b-1.\]
Indeed, there are two cases.
\begin{enumerate}
    \item If $r_0=b-1$, then $r=b\widetilde{r}+(b-1)$ and using twice Lemma~\ref{lemme_relat_rec_var}, we get
    \begin{align*}
        \Var(\mu^{(r+1)})-\Var(\mu^{(r)})&=\Var(\mu^{(b(\widetilde{r}+1))})-\Var(\mu^{(b\widetilde{r}+b-1)})\\&=\frac{\Var(\mu^{(\widetilde{r}+1)})-\Var(\mu^{(\widetilde{r})})}{b}-(b-1).
    \end{align*}
    \item If $r_0\in\{0, \cdots, b-2\}$ then, using the same tools, we compute 
    \begin{align*}
        \Var(\mu^{(r+1)})-\Var(\mu^{(r)})&=\Var(\mu^{(b\widetilde{r}+r_0+1)})-\Var(\mu^{(b\widetilde{r}+r_0)})\\&=\frac{\Var(\mu^{(\widetilde{r}+1)})-\Var(\mu^{(\widetilde{r})})}{b}+b-2r_0-1.
    \end{align*}
    We observe that $\lvert b-2r_0-1\rvert\le b-1$.
\end{enumerate}
Then, we conclude by an easy induction on the number of digits of $r$ and by checking that
\begin{align*}
    \lvert \Var(\mu^{(r+1)})-\Var(\mu^{(r)})\rvert &\le \frac{\lvert\Var(\mu^{(\widetilde{r}+1)})-\Var(\mu^{(\widetilde{r})})\rvert}{b}+ b-1 \\
        &\le \frac{b}{b}+b-1=b.
\end{align*}
\end{proof}
\begin{proof}[Proof of Proposition~\ref{majoration}]
Observe that we have
\begin{subequations}
    \begin{align*}
        b^2 &\ge j(1+b-j) \quad \mbox{ for all }  j=1, \cdots b-1, \tag{$C_0$} \label{C_7}\\
        b^2 &\ge  2b, \tag{$C_1$}\label{C_8}\\
        b^2 &\ge jb \quad \mbox{ for all } j=0, \cdots, b-1. \tag{$C_2$}\label{C_9}\\
    \end{align*}
\end{subequations}

We proceed by induction on $\lambda(r)\ge 1$.

\begin{enumerate}
    \item Initialisation: if $\lambda(r)=1$ then we have two cases depending on the type of blocks we are considering. But the variance is given at \eqref{variance pour un bloc}. Using Conditions~\eqref{C_7} and \eqref{C_8}, we can deduce that in both cases we have $\Var(\mu^{(r)})\le b^2 $.
%    \begin{enumerate}
%        \item Either $1\le r < b-1$ then from Lemma \ref{formule_de_variance1}
%        \[\Var(\mu^{(r)})\le \max\{r(1+b-r): r=1, \cdots, b-1\}.\]
%        \item Or $r=b^m-1$ for some $m\ge 1$ and then from Lemma \ref{formule_de_variance4}
%        \[\Var(\mu^{(r)})\le 2b.\]
%    \end{enumerate}
    \item Inductive step: we let $n>1$ and we assume that if $\lambda(r)\le n$ then $\Var(\mu^{(r)})\le b^2 \lambda(r)$. We now assume that our $r\in\NN$ satisfies $\lambda(r)=n+1$. Let $\ell\ge 0$, $\widehat{r}\in\NN$ and $B_1$ a non-zero block such that we can write the expansion of $r$ as follow 
    \[\overline{\widehat{r} \hspace{1mm} \hspace{1mm} B_1 \hspace{1mm} 0^{\ell}}.\]
     We can assume $\ell=0$ but $\widehat{r}$ may have a rightmost block composed of $0$'s. We discuss on the type of $B_1$.
     \begin{enumerate}
        \item If $B_1$ is a block of ($b-1$)'s of length $m$ then $r=b^{m}\widehat{r}+b^{m}-1$ and we have the trivial equality
        \begin{align*}
            \Var(\mu^{(r)}) &= \Var(\mu^{(\widehat{r})})+\left(\Var(\mu^{(r)})-\Var(\mu^{(r+1)})\right)\\
                            & \quad -\left(\Var(\mu^{(\widehat{r})})-\Var(\mu^{(\widehat{r}+1)})\right).
        \end{align*}
        Using Lemma \ref{comparaison}, the inductive hypothesis and Condition \eqref{C_8}
         \[\Var(\mu^{(r)})\le b^2 n + b +b \le b^2 (n+1).\]
         \item If $B_1$ is a single-digit block $r_0$ then $r=b\widehat{r}+r_0$ and with we have another trivial equality 
         \begin{align*}
             \Var(\mu^{(r)})&=\Var(\mu^{(\widehat{r})}) + \sum_{k=1}^{r_0} \Var(\mu^{(b\widehat{r}+k)})-\Var(\mu^{(b\widehat{r}+k-1)}).
         \end{align*}
         Then, using Lemma \ref{comparaison}, the inductive hypothesis and Condition \eqref{C_9}, we find
         \[\Var(\mu^{(r)})\le b^2 n + r_0b \le b^2 (n+1).\]
     \end{enumerate}
     This concludes the inductive step and the proof.
\end{enumerate}
\end{proof}

We also have a lower bound depending on $\lambda(r)$.

\begin{prop}\label{minoration}
For any $r\ge 1$
\begin{align}
    \Var(\mu^{(r)})&\ge \frac{b}{4}  \lambda(r).\label{formule_minoration}
\end{align}
\end{prop}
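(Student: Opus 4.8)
The plan is to prove \eqref{formule_minoration} by the same block-peeling mechanism used for the upper bound (Proposition~\ref{majoration}), but organised as a \emph{strong induction on the value of $r$} rather than on $\lambda(r)$. Writing $r=\overline{\widehat r\,B_1}$ where $B_1$ is the rightmost non-zero block (trailing zeros may be discarded, since appending zeros changes neither $\mu^{(r)}$, by Proposition~\ref{relat_rec_mesure} with $r_0=0$, nor $\lambda$), we have $\lambda(r)=\lambda(\widehat r)+1$, and Lemmas~\ref{lemme_relat_rec_var} and~\ref{formule_de_variance4} express $\Var(\mu^{(r)})$ as a \emph{convex} combination of $\Var(\mu^{(\widehat r)})$ and $\Var(\mu^{(\widehat r+1)})$ plus a strictly positive additive term. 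For the base case $\lambda(r)=1$, i.e. $r$ a single non-zero block, Lemma~\ref{formule_de_variance1} and the remark following Lemma~\ref{formule_de_variance4} give $\Var(\mu^{(r)})\ge b\ge \tfrac b4$, so the bound holds.

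For the inductive step I would treat the two block types separately. If $B_1$ is a single-digit block $r_0\in\{1,\dots,b-2\}$ then $r=b\widehat r+r_0$ and Lemma~\ref{lemme_relat_rec_var} produces the additive term $r_0(b-r_0)$; if $B_1$ is a block of $(b-1)$'s of length $m$ then $r=b^m\widehat r+b^m-1$ and Lemma~\ref{formule_de_variance4} produces the additive term $b-b^{1-m}$. In both cases one applies the induction hypothesis to the two strictly smaller integers $\widehat r$ and $\widehat r+1$ (one checks $\widehat r,\widehat r+1<r$ as soon as $\lambda(r)\ge2$). The delicate point is that the hypothesis for $\widehat r+1$ reads $\Var(\mu^{(\widehat r+1)})\ge\frac b4\lambda(\widehat r+1)$, and $\lambda(\widehat r+1)$ may be \emph{smaller} than $\lambda(\widehat r)$; this is precisely why an induction on $\lambda$ (as for the upper bound) fails here and one must induct on the value $r$.

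Thus the hard part is to control how the number of non-zero blocks changes under adding~$1$. The key combinatorial lemma I would isolate is
\[\lambda(s+1)\ \ge\ \lambda(s)-2\qquad\text{for every }s\in\NN.\]
It is cleanest to establish from the identity $\lambda(s)=\#\{\text{maximal runs of the digit }(b-1)\}+\#\{\text{positions whose digit lies in }\{1,\dots,b-2\}\}$: adding $1$ only alters the trailing maximal run of $(b-1)$'s (turning it into $0$'s) and the first digit $d\le b-2$ that stops the carry (raising it to $d+1$), all higher digits being unchanged. Hence at most one $(b-1)$-run can be destroyed (the trailing one) and at most one single-digit block can be destroyed (when $d=b-2$ leaves $\{1,\dots,b-2\}$), while every other effect is non-negative; this gives a loss of at most $2$.

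Granting this, the step closes by elementary arithmetic. Put $\delta:=\lambda(\widehat r)-\lambda(\widehat r+1)\le2$. In the $(b-1)$-block case the induction hypothesis gives a lower bound $\frac b4\lambda(\widehat r)+\tfrac12\bigl(b-b^{1-m}\bigr)$, and since $b^{1-m}\le1$ we have $\tfrac12(b-b^{1-m})\ge\tfrac12(b-1)\ge\frac b4$ for $b\ge2$; in the single-digit case ($m=1$, so $\delta=2$ is the worst case) it gives $\frac b4\lambda(\widehat r)+r_0\bigl(b-r_0-\tfrac12\bigr)$, and $r_0(b-r_0-\tfrac12)\ge\frac b4$ for all $r_0\in\{1,\dots,b-2\}$ when $b\ge3$ (checking the two endpoints $r_0=1$ and $r_0=b-2$ suffices by concavity). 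In each case the right-hand side equals $\frac b4\bigl(\lambda(\widehat r)+1\bigr)=\frac b4\lambda(r)$, completing the induction. I expect the verification of $\lambda(s+1)\ge\lambda(s)-2$ and the bookkeeping of which blocks survive the carry to be the main obstacle, whereas the closing inequalities are routine.
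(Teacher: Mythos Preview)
Your argument is correct, and it is organised differently from the paper's. The paper proves Proposition~\ref{minoration} by induction on $\lambda(r)$; this forces an extra layer of case analysis, because at the inductive step one needs $\Var(\mu^{(\widehat r+1)})$ and it may happen that $\lambda(\widehat r+1)=\lambda(r)$, so the induction hypothesis is unavailable. The paper resolves this by invoking Lemma~\ref{formule_de_variance3} (the ``units digit $1$'' formula) to unfold one more block and fall back onto integers with fewer non-zero blocks. Your strong induction on the \emph{value} of $r$ bypasses this completely: since $\widehat r+1<r$ whenever $\lambda(r)\ge2$, the hypothesis is always applicable regardless of $\lambda(\widehat r+1)$, and Lemma~\ref{formule_de_variance3} is never needed. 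The price is that your ``base case'' is not a single value but the whole family $\{r:\lambda(r)=1\}$, handled directly by the explicit formulas; this is harmless. Your closing inequalities are exactly the paper's conditions \eqref{C_1}~and~\eqref{C_2}, and the combinatorial bound $\lambda(s+1)\ge\lambda(s)-2$ that you isolate is the same fact the paper records in Figure~\ref{classification}. One cosmetic remark: the aside ``($m=1$, so $\delta=2$ is the worst case)'' in the single-digit paragraph is confusing, since $m$ was introduced only for $(b-1)$-blocks; the computation there is simply the worst case $\delta=2$ of your lemma, independent of any $m$.
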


\begin{proof}
Observe that 
\begin{subequations}
    \begin{align}
        \frac{b}{4} &\le \min\{j(b-j): j=1, \cdots b-1\}, \tag{$C_3$} \label{C_0}\\
        \frac{b}{4} &\le  (b-1), \tag{$C_4$}\label{C_1}\\
        \frac{b}{4} &\le \min \{j(b-j-\frac{1}{2}): j=1, \cdots b-1\}, \tag{$C_5$}\label{C_2}
    \end{align}
\end{subequations}
Of course, some of these conditions are redundant but we keep them all for simplicity because each one will be used in the proof.
We prove the result using induction on $\lambda(r)\ge 1$.
\begin{enumerate}
    \item Initialisation: if $\lambda(r)=1$ then we have two cases depending on the type of blocks we are considering. However, using \eqref{variance pour un bloc}, Conditions~\eqref{C_0} and \eqref{C_1}, we can deduce $\Var(\mu^{(r)})\ge \frac{b}{4} $.
     \item  Inductive step: we let $n>1$ and we assume that if $\lambda(r)\le n$ then $\Var(\mu^{(r)})\ge \frac{b}{4}\lambda(r)$. We now take $r\in\NN$ such that $\lambda(r)=n+1$, and we write
     \[r=b\widetilde{r}+r_0.\]
     Without loss of generality, we can assume that $r_0\neq 0$. Indeed, if $r_0=0$ then $\Var(\mu^{(r)})=\Var(\mu^{(\widetilde{r})})$ and $\lambda(r)=\lambda(\widetilde{r})$.
     
     From  Lemma \ref{lemme_relat_rec_var}, we get
     \begin{align}\label{relat rec  variance}
     \Var(\mu^{(r)})=\frac{b-r_0}{b}\Var(\mu^{(\widetilde{r})})+\frac{r_0}{b}\Var(\mu^{(\widetilde{r}+1)})+r_0(b-r_0).
     \end{align}
     We discuss about the value of $\lambda(\widetilde{r})$ which can be either $n$ or $n+1$.
     \begin{enumerate}
         \item If $\lambda(\widetilde{r})=n$, which means that $r_0\neq b-1$ or $r_1\neq b-1$: we use the induction hypothesis to get $\Var(\mu^{(\widetilde{r})})\ge \frac{b}{4}\lambda(\widetilde{r})=\frac{bn}{4}$, and it follows that 
         \[\Var(\mu^{(r)})\ge\frac{b-r_0}{4}n+\frac{r_0}{b}\Var(\mu^{(\widetilde{r}+1)})+r_0(b-r_0).\]
         We now observe that $n-2\le\lambda(\widetilde{r}+1)\le n+1$. The reader is referred to Figure \ref{classification} (with $\widetilde{r}$ instead of $r$) for more details. We consider two cases.
         \begin{enumerate}
             \item If $\lambda(\widetilde{r}+1)=n+1$ then we cannot apply the induction hypothesis. In this case $\widetilde{r}$ is a multiple of $b$ if $b\ge 3$ and even $b^2$ when $b=2$ (see Figure \ref{classification}). So, there exists $\widehat{r}\in\NN$ and $m\ge 1$ (or $2$ if $b=2$) such that  
             \[\widetilde{r}=b^m\widehat{r} \hspace{5mm} \mbox{ and } \hspace{5mm} b{\not|}\,\widehat{r}.\]
             
             which means $\lambda(\widehat{r})=\lambda(\widetilde{r})=n$ and $\lambda(\widehat{r}+1)< n+1$ since the rightmost digit of $\widehat{r}$ is not $0$ (see Figure~\ref{classification}). We can apply Lemma \ref{formule_de_variance3}
             \[\Var(\mu^{(\widetilde{r}+1)})=\left(1-\frac{1}{b^m}\right)\Var(\mu^{(\widehat{r})})+\frac{1}{b^m}\Var(\mu^{(\widehat{r}+1)})+b-\frac{1}{b^{m-1}}. \]
             %We recall 
             %\begin{align*}
              %   \Var(\mu^{(r)})=\frac{b-r_0}{b}\Var(\mu^{(\widetilde{r})})+\frac{r_0}{b}\Var(\mu^{(\widetilde{r}+1)})+r_0(b-r_0).
             %\end{align*}
             
             Since $\mu^{(\widetilde{r})}=\mu^{(\widehat{r})}$ and $\lambda(\widetilde{r})=n$, we deduce from the induction hypothesis and \eqref{relat rec  variance} that
             \begin{align*}
                 \Var(\mu^{(r)})%&\ge \frac{b-r_0}{4}n+\frac{r_0}{b}\left[\left(1-\frac{1}{b^m}\right)\overbrace{\Var(\mu^{(\widehat{r})})}^{\ge \frac{bn}{4}}+\frac{1}{b^m}\overbrace{\Var(\mu^{(\widehat{r}+1)})}^{\ge\frac{b}{4}(n-2)}+b-\frac{1}{b^{m-1}}\right]+\overbrace{r_0(b-r_0)}^{\ge\frac{b}{4}}\\
                    %&\ge \frac{b-r_0}{4}n+\frac{r_0}{b}\left[\frac{b}{4}n-\frac{2b}{4b^m}+b-\frac{1}{b^{m-1}}
                    %\right]+\frac{b}{4}\\
                    &\ge\frac{b-r_0}{4}n+\frac{r_0}{b}\left[\frac{b}{4}n+b-\frac{3}{2b^{m-1}}\right]+\frac{b}{4}\\
                    &\ge\frac{b}{4}(n+1)+r_0\left(1-\frac{3}{2b^{m-1}}\right)\\
                    &\ge \frac{b}{4}(n+1).
             \end{align*}
             \item If $\lambda(\widetilde{r}+1)\neq n+1$ %then $\lambda(\widetilde{r}+1)\ge n-2$ 
              then we can apply the induction hypothesis.
            \begin{align*}
                 \Var(\mu^{(r)})&\ge\frac{b-r_0}{4}n+\frac{r_0}{2}(n-2)+r_0(b-r_0) \\
                    &\ge \frac{b}{4} n -\frac{r_0}{2}  +r_0(b-r_0).
             \end{align*}
             Thanks to \eqref{C_2}
             \[r_0(b-r_0)-\frac{r_0}{2} \ge \frac{b}{4}.\]
             %\textcolor{red}{Indeed, it is equivalent to
             %\[r_0(b-r_0) \ge \frac{b}{4}(1+\frac{2r_0}{b}).\]}
             And so
             \[\Var(\mu^{(r)})\ge \frac{b}{4}n+\frac{b}{4} =\frac{b}{4} (n+1).\]

         \end{enumerate}
         We conclude the case $\lambda(\widetilde{r})=n$.
         \item If $\lambda({\widetilde{r}})=n+1$: it means that the rightmost block in the expansion of $r$ is a block of $(b-1)$'s of length $m\ge 2$. So, there exists $\widehat{r}\in\NN$ such that $r=b^m\widehat{r}+b^m-1$ and the rightmost digit in the expansion of $\widehat{r}$ is not $(b-1)$, that is to say $b{\not|}\, \widehat{r}+1$. We are in the context of Lemma \ref{formule_de_variance4}, we have 
         \[\Var(\mu^{(r)})=\frac{1}{b^{m}}\Var(\mu^{(\widehat{r})})+\left(1-\frac{1}{b^{m}}\right)\Var(\mu^{(\widehat{r}+1)})+b-\frac{1}{b^{m-1}}.\]
         Since $\lambda(\widehat{r})=n$ and $n-1\le \lambda(\widehat{r}+1) \le n+1$ ($\widehat{r}$ does not start with a block of $(b-1)$'s so $\lambda(\widehat{r}+1)\neq n-2$, see Figure \ref{classification}), we have now
         \begin{align}\label{formule_dans_preuve_minoration}
             \Var(\mu^{(r)})&\ge\frac{n}{4b^{m-1}}+\left(1-\frac{1}{b^{m}}\right)\Var(\mu^{(\widehat{r}+1)})+b-\frac{1}{b^{m-1}}.
         \end{align}
         We discuss about the possible values of $\lambda(\widehat{r}+1)$.
         \begin{enumerate}
            \item If $\lambda(\widehat{r}+1)=n+1$ then it is just as in the point (a)i. of this proof, it means that $\widehat{r}$ starts with a block of $0$'s. So we let $m'\ge 1$ ($2$ if $b=2$) and $\widehat{\widehat{r}}\in\NN$ such that $\widehat{r}=b^{m'}\widehat{\widehat{r}}=$ and $b{\not|}\,\widehat{\widehat{r}}$. We are again in the context of Lemma \ref{formule_de_variance3} 
            \[\Var(\mu^{(\widehat{r}+1)})=\left(1-\frac{1}{b^{m'}}\right)\Var(\mu^{(\widehat{\widehat{r}})})+\frac{1}{b^{m'}}\Var(\mu^{(\widehat{\widehat{r}}+1)})+b-\frac{1}{b^{m'-1}}.\]
            We observe $\lambda(\widehat{\widehat{r}})=\lambda(\widehat{r})=n$ and $n-2\le\lambda(\widehat{\widehat{r}}+1)\le n$ (again, it cannot be $n+1$ because $\widehat{\widehat{r}}$ does not start with a block of $0$'s).
            
            So we have
            \begin{align*}
                \Var(\mu^{(\widehat{r}+1)}) &\ge \left(1-\frac{1}{b^{m'}}\right)\frac{b}{4} n+\frac{1}{4b^{m'-1}} (n-2)+b-\frac{1}{b^{m'-1}}\\
                    &\hspace{5mm}=\frac{b}{4} n +b-\frac{3}{2b^{m'-1}}.
            \end{align*}
            %We recall 
            %\[\Var(\mu^{(r)})\ge\frac{\gamma n}{b^{m}}+\left(1-\frac{1}{b^{m}}\right)\Var(\mu^{(\widehat{r}+1)})+b-\frac{1}{b^{m-1}}.\]
            From \eqref{formule_dans_preuve_minoration} we deduce
            \begin{align*}
                \Var(\mu^{(r)}) &\ge \frac{b}{4} n+\left(1-\frac{1}{b^m}\right)\left(b-\frac{3}{2b^{m'-1}}\right)+b-\frac{1}{b^{m-1}}. 
            \end{align*}
            We observe that
            \[b-\frac{3}{2b^{m'-1}}\ge 0\]
            as well as
            \[b-\frac{1}{b^{m-1}}\ge \frac{b}{4}.\]
            So we can write 
            \[\Var(\mu^{(r)})\ge \frac{b}{4} (n+1).\]
            \item If $\lambda(\widehat{r}+1)\neq n+1$ then %$\lambda(\widehat{r}+1)\ge n-1$ and 
            we can apply the induction hypothesis.
                \begin{align*}
                    \Var(\mu^{(r)}) &\ge\frac{n}{4b^{m-1}}+ \left(1-\frac{1}{b^{m}}\right)\frac{b}{4} (n-1)+b-\frac{1}{b^{m-1}}\\
                    &\ge \frac{b}{4} n-\left(1-\frac{1}{b^{m}}\right)\frac{b}{4} +b-\frac{1}{b^{m-1}}\\
                \end{align*}
                We observe that 
                \[b-\frac{1}{b^{m-1}}-\left(1-\frac{1}{b^{m}}\right)\frac{b}{4} \ge \frac{b}{4},\]
                %Indeed, it is equivalent to
                %\[\frac{b^m}{2}-\frac{3}{4}\ge 0.\]
                so 
                \[\Var(\mu^{(r)})\ge \frac{b}{4} n +\frac{b}{4} =\frac{b}{4} (n+1).\]
         \end{enumerate}
         It concludes the case $\lambda(\widetilde{r})=n+1$. The statement is thus true when $\lambda(r)=n+1$.
     \end{enumerate}
\end{enumerate}
\end{proof}
The following figure shows how the number of blocks behaves of an integer when we add $1$ to it.
\begin{figure}[H]
    \centering
    \includegraphics[scale=0.32]{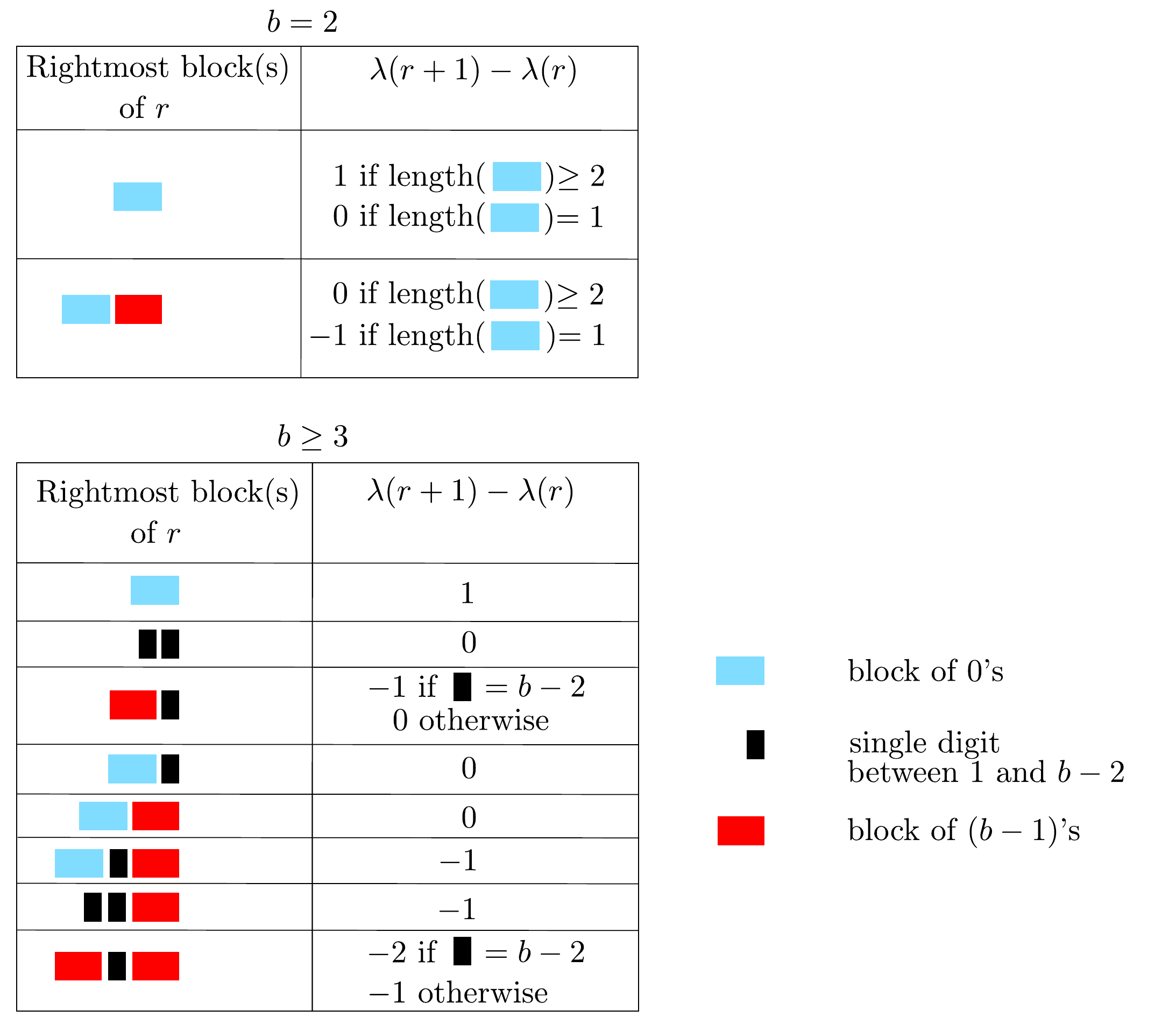}
    \caption{Variations of the number of non-zero blocks when we add $1$.}
    \label{classification}
\end{figure}
%We have shown Theorem~\ref{encadrement_de_la_variance} stated in the introduction.
\begin{proof}[Proof of Theorem~\ref{encadrement_de_la_variance}]
We use Proposition~\ref{majoration}, Proposition~\ref{minoration} and \eqref{lien rho lambda} to get the result.
\end{proof}

\section{A \texorpdfstring{$\phi$}{P}-mixing process}\label{A phi-mixing process}
We work on the probability space $(\XX,\mathscr{B}(\XX),\PP)$. For a given integer $r$, $\Delta^{(r)}$ is viewed as a random variable with law $\mu^{(r)}$ by Corollary~\ref{corollaire_prop_moment} (the randomness comes from the argument $x$ of $\Delta^{(r)}$, considered as a random outcome in $\XX$ with law $\PP$).
Our purpose in this section is to study the asymptotic behaviour of $\mu^{(r)}$ as the number of blocks $\rho(r)$ goes to infinity. For this, we will decompose $\Delta^{(r)}$ as a sum
\[\Delta^{(r)}=\sum_{i=1}^{\lambda(r)}X_i^{(r)}\]
where $\lambda(r)$ is the number of non-zero blocks in the base-$b$ expansion of  $r$ (see Section~\ref{upper and lower bound of the variance}), and $(X_1^{(r)},\cdots,X_{\lambda(r)}^{(r)})$ is a finite process defined in the next section. We will prove and use some mixing properties of this process to get our result.
    \subsection{The process}\label{The process}
Again, the case $r=0$ is irrelevant so we assume $r\ge 1$. For $1\le i\le \lambda(r)$, we will write $B_i$ as the $i^{th}$ non-zero block present in the expansion of $r$, starting from the left-hand side of the expansion and ending at the units digit. We now define $r[i]$ as the integer whose base-$b$ expansion is obtained as follows:  for $k=i+1, \cdots, \lambda(r)$, we replace the block $B_k$ by a block of $0$'s of the same length (see Figure \ref{construction r[i]} below). We observe that $r[\lambda(r)]=r$.  
\begin{figure}[H]
    \centering
    \includegraphics[scale=0.34]{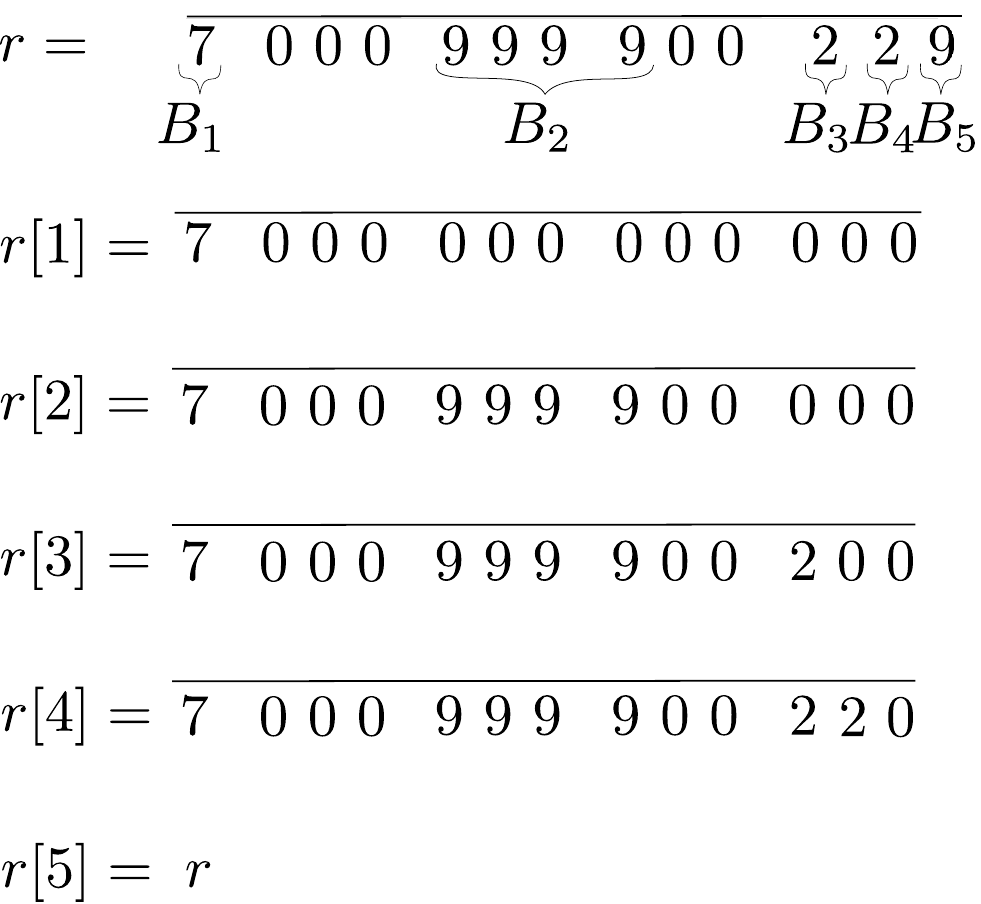}
    \caption{Example in base $b=10$.}
    \label{construction r[i]}
\end{figure} 
With the convention $r[0]:=0$, we observe the trivial equality
\[r=\sum_{i=1}^{\lambda(r)}r[i]-r[i-1].\]
For $1\le i\le \lambda(r)$, we define almost everywhere on $\XX$ (see Subsection~\ref{Sum of digits on the odometer})
\[X_i^{(r)}:=\Delta^{(r[i]-r[i-1])}\circ T^{r[i-1]}.\] 
Since $r[i]-r[i-1]=\overline{B_i 0\cdots 0}$, the function  $X_i^{(r)}$ is a random variable corresponding to the action of the $i^{th}$ block $B_i$ once the previous blocks have already been taken into consideration.
From \eqref{decomp_sum}, we deduce
\[\Delta^{(r)}=\sum_{i=1}^{\lambda(r)}X_i^{(r)}.\]
In particular, if $x\in\XX$ is randomly chosen with law $\PP$, then $\displaystyle\sum_{i=1}^{\lambda(r)}X_i^{(r)}(x)$ follows the law $\mu^{(r)}$.
Hence, the standard deviation $\sigma_r$ of $\mu^{(r)}$ defined in Theorem~\ref{theoprincipal} satisfies
\[\sigma_r^2=\Var\left(\sum_{i=1}^{\lambda(r)}X_i^{(r)}\right).\]
We first show that every moment of $X_i^{(r)}$ is bounded from above by a constant independent of $r$ and $i$.
\begin{lemme}\label{moments bornées de X_i}
For every $k\in\NN$, there exists a constant $C_k>0$ such that
\[\forall r\in\NN,\, \forall 1\le i \le \lambda(r), \quad \quad \EE\left(\left|X_i^{(r)}\right|^k\right)\le C_k.\]
\end{lemme}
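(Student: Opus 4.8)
The plan is to reduce the statement to a uniform bound on the moments of the measures $\mu^{(B)}$ attached to an individual block $B$, and then to treat the two possible types of block separately.

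First I would exploit the $T$-invariance of $\PP$. Setting $s_i:=r[i]-r[i-1]$, we have $X_i^{(r)}=\Delta^{(s_i)}\circ T^{r[i-1]}$; since $T^{r[i-1]}$ preserves $\PP$, the law of $X_i^{(r)}$ is exactly $\mu^{(s_i)}$ by Corollary~\ref{corollaire_prop_moment}. The base-$b$ expansion of $s_i$ consists of the block $B_i$ followed by a string of $0$'s, and applying Proposition~\ref{relat_rec_mesure} with $r_0=0$ repeatedly gives $\mu^{(b\widetilde{r})}=\mu^{(\widetilde{r})}$; hence $\mu^{(s_i)}=\mu^{(B)}$, where $B$ is the integer value of $B_i$. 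Consequently $\EE(|X_i^{(r)}|^k)=\sum_{d\in\ZZ}|d|^k\,\mu^{(B)}(d)$, and it suffices to bound this uniformly over all $B$ that arise as the value of a single non-zero block, namely $B\in\{1,\dots,b-2\}$ (single-digit blocks) and $B=b^m-1$, $m\ge1$ (blocks of $(b-1)$'s).

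The single-digit case is harmless: there are only finitely many values $B\in\{1,\dots,b-2\}$, each giving a measure with exponentially decaying tails (the number of carries in $x+B$ is dominated by a geometric variable), so every $k$-th moment is finite and we take the maximum over these finitely many $B$. The genuine difficulty is the family $B=b^m-1$ with $m$ unbounded: here $s(B)=m(b-1)$ grows, so $|\Delta^{(B)}|$ cannot be controlled merely by the number of carries, and the required bound must come from cancellation. I would therefore analyse the carry propagation in the addition $x+(b^m-1)$ directly. By \eqref{lien_delta_retenue_odometre}, $\Delta^{(b^m-1)}(x)=(m-c)(b-1)=:D(b-1)$, where $c$ is the number of carries. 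A digit-by-digit inspection shows that within the block a carry, once created, persists through all remaining positions, so that on the event $\{x_0=\dots=x_{m-1}=0\}$ (of probability $b^{-m}$) one has $D=m$, whereas otherwise $D=j_0-\ell$, with $j_0$ the index of the first nonzero digit among $x_0,\dots,x_{m-1}$ and $\ell$ the length of the run of $(b-1)$'s starting at position $m$.

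The crux, and the step I expect to be the main obstacle to make fully rigorous, is that $j_0$ and $\ell$ obey geometric laws whose parameters do not depend on $m$. Indeed $j_0$ is dominated by the position $G_1$ of the first nonzero digit of $x$, with $\PP(G_1=j)=(b-1)b^{-(j+1)}$, while $\ell=G_2$ has the same law, and $G_1,G_2$ are independent since they depend on disjoint coordinates. Thus $|D|\le G_1+G_2$ off the exceptional event, giving
\[\EE(|D|^k)\le b^{-m}m^k+\EE\big((G_1+G_2)^k\big).\]
Both terms are bounded independently of $m$: the geometric moments are finite constants and $b^{-m}m^k$ is bounded over $m\ge1$. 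Hence $\EE(|\Delta^{(b^m-1)}|^k)=(b-1)^k\,\EE(|D|^k)$ is bounded uniformly in $m$, and combining with the single-digit case produces a constant $C_k$ that is uniform in both $r$ and $i$.
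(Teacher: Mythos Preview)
Your argument is correct, but it follows a different route from the paper. For the block of $(b-1)$'s, the paper does not analyse the carries directly: instead it uses the cocycle relation~\eqref{decomp_sum} in the form
\[
\Delta^{(b^m-b^n)}=\Delta^{(b^m)}-\Delta^{(b^n)}\circ T^{b^m-b^n},
\]
so that $X_i^{(r)}$ is the difference of two (dependent) random variables each distributed as $\mu^{(1)}$; a single application of Minkowski's inequality then gives the uniform moment bound in one line. Your approach is more combinatorial: you track the carry propagation and obtain the explicit representation $D=j_0-\ell$ off an event of probability $b^{-m}$, then bound $|D|$ by a sum of two geometric variables. This is longer but more transparent---it actually identifies the law of $\Delta^{(b^m-1)}$ rather than merely bounding it---and it avoids invoking the cocycle identity. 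One small remark: you do not need the independence of $G_1$ and $G_2$ that you assert (and which is slightly delicate to state, since the unconditional $G_1$ may look at coordinates $\ge m$); the inequality $(G_1+G_2)^k\le 2^{k-1}(G_1^k+G_2^k)$ and the finiteness of the individual geometric moments already suffice.
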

\begin{proof}
Let $k\in\NN$. If $X_i^{(r)}$ corresponds to the action of a single-digit block, that is the action of one digit $\alpha$ between $1$ and $b-2$, then its law is given by $\mu^{(\alpha)}$ whose moments are all finite (see Corollary~\ref{corollaire_prop_moment}). So, we have in this case
%\[\left|\left|X_n^i\right|\right|_k\le\max\left\{\left|\left|\mu^{(\alpha)}\right|\right|_k: 1\le\alpha\le b-2\right\}.\]
\[\EE\left(\left|X_i^{(r)}\right|^k\right)\le\max\left\{\EE\left(\left|\Delta^{(\alpha)}\right|^k\right): 1\le\alpha\le b-2\right\}.\]
Now, if $X_i^{(r)}$ corresponds to the action of a block of $(b-1)$'s, there exist two integers $n\le m$ such that $r[i]-r[i-1]=b^m-b^n$.
%\begin{figure}[H]
%    \centering
%    \includegraphics[scale=0.35]{}
%    \caption{Decomposition of the action of a block of $(b-1)$'s.}
%    \label{action bloc b-1}
%\end{figure}
%As shown in Figure~\ref{action bloc b-1}, we can decompose the action of a block of $(b-1)$'s into two actions:
%\begin{itemize}
%    \item adding $1$ one step after the block and
%    \item subtracting $1$ at the beginning of the block. 
%\end{itemize}
%Also, from \eqref{decomp_sum}, we have
%\[X^{(r)}_i=\Delta^{(b^m)}-\Delta^{(b^n)}\circ T^{b^m-b^n}.\]
It follows from the definition of $X_i^{(r)}$ and \eqref{decomp_sum} that
\[X_i^{(r)}=\Delta^{(b^m-b^n)}\circ T^{b^n}\overset{d}{=}\Delta^{(b^m-b^n)}=\Delta^{(b^m)}-\Delta^{(b^n)}\circ T^{b^m-b^n}\]
where $\overset{d}{=}$ means the equality in distribution. Since $\mu^{(b^n)}=\mu^{(b^m)}=\mu^{(1)}$, we can write $X_i^{(r)}$ as the difference of two dependent random variables following the law $\mu^{(1)}$ which has finite moments. So there exists a constant that depends only on $k$ such that $\EE\left(|X_i^{(r)}|^k\right)$ is bounded by this constant.
%Precision:
%$Y_i$ and $Z_i$ corresponding both to the fact of adding $1$. These two random variables follow law $\mu^{(1)}$.  Now, we can compute
%\begin{align*}
 %   \EE\left(\left\vert X_i^{(r)} \right\vert^k\right)&=\EE\left(\left\vert Y_i-Z_i \right\vert^k\right)\\
  %      &\le 2^k \EE\left(\vert Y_i\vert^k + \vert Z_i\vert^k\right). \\
%\end{align*}
%And so 
%\begin{align*}
%\left|\left|X_n^i\right|\right|_k&\le2^{\frac{k+1}{k}}\left|\left|Geo\left(\frac{b-1}{b}\right)\right|\right|_k.
%\end{align*}
%\[\EE\left(\left|X_i^{(r)}\right|^k\right)\le 2^{\frac{k+1}{k}}\EE\left(\left|\mu^{(1)}\right|^k\right).\]
\end{proof}
The next part is devoted to the estimation of the so-called $\phi$-mixing coefficients for the finite sequence $(X_i^{(r)})_{1\le i\le \lambda(r)}$.

    \subsection{The \texorpdfstring{$\phi$}{P}-mixing coefficients}
There exist many types of mixing coefficients (see e.g. the survey \cite{RCB1} by Bradley). Those we are working with are commonly called ``$\phi$-mixing coefficients''.
\begin{definition}
   Let $(X_i)_{i\ge 1}$ be a (finite or infinite) sequence of random variables. The associated \emph{$\phi$-mixing coefficients} $\phi(k)$, $k\ge 1$, are defined by
   \[\phi(k):=\sup_{p\ge 1} \hspace{1mm} \sup_{A,B} \hspace{2mm} \lvert \PP_A(B)-\PP(B) \rvert\]
   where the second supremum is taken over all events $A$ and $B$ such that 
\begin{itemize}
    \item $A\in\sigma(X_i:1\le  i\le p)$,
    \item $\PP(A)>0$ and 
    \item $B\in\sigma(X_i:  i\ge k+p)$.
\end{itemize}
By convention, if $X_i$ is not defined when $i\ge k+p$ then the $\sigma$-algebra is trivial.
\end{definition}
In the case of a finite sequence $(X_1,\cdots,X_n)$, the convention implies that $\phi(k)=0$ for $k\ge n$.
We now give an upper bound on the $\phi$-mixing coefficients for the process $(X_i^{(r)})$ defined in Section \ref{The process}.
\begin{lemme}\label{lemme_de_melange}
  For $r\ge 1$, the mixing coefficients of $(X_i^{(r)})_{1\le i\le \lambda(r)}$ satisfy 
  \[\forall k\ge 1, \quad \phi(k)\le 2\left(\frac{b-1}{b}\right)^{\frac{k}{2}-1}.\]
\end{lemme}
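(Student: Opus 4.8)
The plan is to realise the two tails of the process as functions of disjoint (hence independent) ranges of digits of $x$, up to a rare ``carry-crossing'' event, and then to bound the probability of that event geometrically. Throughout, let $\ell_i$ denote the position of the least significant digit of the block $B_i$, so that $\ell_1>\ell_2>\cdots>\ell_{\lambda(r)}$, and set $\mathcal{H}:=\sigma(x_t:t\ge \ell_p)$ and $\mathcal{L}:=\sigma(x_t:t<\ell_p)$; since under $\PP$ the digits are i.i.d.\ uniform, $\mathcal{H}$ and $\mathcal{L}$ are independent. Fix $p\ge 1$, an event $A\in\sigma(X_1^{(r)},\dots,X_p^{(r)})$ with $\PP(A)>0$, and an event $B\in\sigma(X_{k+p}^{(r)},\dots,X_{\lambda(r)}^{(r)})$ (one may assume $k+p\le\lambda(r)$, otherwise $\phi(k)=0$ by convention). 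I must show $|\PP_A(B)-\PP(B)|\le 2\big(\tfrac{b-1}{b}\big)^{k/2-1}$. The first step is the measurability of $A$: writing $X_i^{(r)}=\Delta^{(r[i])}-\Delta^{(r[i-1])}$ (a consequence of \eqref{decomp_sum}) and using that the addition $x+r[i]$ leaves every digit of index $<\ell_i$ unchanged, each $\Delta^{(r[i])}$, and hence each $X_i^{(r)}$ with $i\le p$, is a function of $(x_t)_{t\ge\ell_i}$ with $\ell_i\ge\ell_p$. Thus $A\in\mathcal{H}$ and, in particular, $A$ is independent of $\mathcal{L}$.

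Next I would control the low variables. For $i\ge k+p$ one has $X_i^{(r)}=\Delta^{(r[i]-r[i-1])}(z)$ with $z=x+r[i-1]$, and by \eqref{lien_delta_retenue_odometre} this equals $s(\overline{B_i})$ minus $(b-1)$ times the number of carries produced when $\overline{B_i0\cdots0}$ is added to $z$. These carries form a single run climbing upward from $B_i$; as long as it stops strictly below $\ell_p$, $X_i^{(r)}$ depends only on the digits $z_t$ with $t<\ell_p$, and these are $\mathcal{L}$-measurable (carries travel upwards, so $z_t$ with $t<\ell_p$ is a function of $(x_s)_{s\le t}$). Let $N$ be the event that, for some $i\ge k+p$, this carry run reaches position $\ell_p$. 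Then $N\in\mathcal{L}$, and on $N^c$ all the variables $X_{k+p}^{(r)},\dots,X_{\lambda(r)}^{(r)}$ agree with $\mathcal{L}$-measurable surrogates (obtained e.g.\ by freezing the digits of index $\ge\ell_p$ to $0$); writing $\widetilde B\in\mathcal{L}$ for the corresponding event one gets $B\cap N^c=\widetilde B\cap N^c$.

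The decoupling step is then formal. Using $B\cap N^c=\widetilde B\cap N^c$, the independence $A\perp\mathcal{L}$ and $N,\widetilde B\in\mathcal{L}$, one checks directly that
\[
\PP(A\cap B)-\PP(A)\PP(B)=\PP(A\cap B\cap N)-\PP(A)\,\PP(B\cap N),
\]
so that, dividing by $\PP(A)$ and using $\PP_A(N)=\PP(N)$,
\[
\big|\PP_A(B)-\PP(B)\big|=\big|\PP_A(B\cap N)-\PP(B\cap N)\big|\le \PP(N).
\]
Everything thus reduces to proving $\PP(N)\le 2\big(\tfrac{b-1}{b}\big)^{k/2-1}$.

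To estimate $\PP(N)$, I would observe that for any $i\ge k+p$ the climbing carry can reach $\ell_p$ only after crossing all of the $k-1$ intermediate non-zero blocks $B_{p+1},\dots,B_{k+p-1}$, and that the configuration $z=x+r[i-1]$ on this middle zone depends on $i$ only through the single carry bit entering the zone from below; hence $N\subseteq N^{(0)}\cup N^{(1)}$, where $N^{(\varepsilon)}$ is the crossing event for carry-in $\varepsilon\in\{0,1\}$. For the carry to cross a block $B_j$ of length $m_j$ the digit $z$ must equal $b-1$ at each of its $m_j$ positions, which pins the $m_j$ independent uniform digits of $x$ in that block to a single value vector (determined by the lower digits), an event of conditional probability $b^{-m_j}$; chaining these conditionally independent events up the middle zone gives $\PP(N^{(\varepsilon)})\le b^{-\sum_j m_j}\le b^{-(k-1)}$, whence $\PP(N)\le 2\,b^{-(k-1)}$. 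Since $b^{-(k-1)}\le\big(\tfrac{b-1}{b}\big)^{k/2-1}$ for all $b\ge 2$ and $k\ge 1$, the announced bound follows. The crux of the argument, and the step I expect to be the main obstacle, is exactly this last estimate: one must argue rigorously that the intermediate digits seen by the climbing carry are genuinely uniform and that the dependence on $i$ collapses to the two carry-in cases — that is, one must untangle the interaction between the carries of the two nested additions $x+r[i-1]$ and the subsequent addition of $B_i$ — instead of naively union-bounding over $i$, which would introduce an unusable prefactor $\lambda(r)$.
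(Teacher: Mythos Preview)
Your proof is correct and follows the same overall architecture as the paper --- split the digits at $\ell_p$, observe $A\in\mathcal{H}$, find a low-measurable event on whose complement $B$ becomes $\mathcal{L}$-measurable, and bound the probability of that event by a buffer-strip count --- but the specific ``bad'' event and its estimate are genuinely different. The paper takes as good event $C=\{\exists j\in\mathcal{I}_{r,p,k}: r_j<b-1,\ x_j=0\}$, which depends only on the \emph{raw} digits $x_j$; it then bounds $\PP(\overline{C})=\big(\tfrac{b-1}{b}\big)^t$ by the purely combinatorial observation that the buffer strip contains at least $t\ge k/2-1$ positions with $r_j\neq b-1$ (separating consecutive $(b-1)$-blocks forces many such positions). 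Your event $N$ instead tracks the actual carry in the \emph{cooked} variable $z=x+r[i-1]$, and you bound $\PP(N)$ by the observation that crossing the buffer strip forces $z_t=b-1$ at every one of at least $k-1$ positions, each pinning one uniform digit of $x$; the union over $i$ is absorbed by the two carry-in cases. Your route yields the sharper estimate $\PP(N)\le 2b^{-(k-1)}$, which you then weaken to match the statement; it also makes the decoupling identity $\PP(A\cap B)-\PP(A)\PP(B)=\PP(A\cap B\cap N)-\PP(A)\PP(B\cap N)$ completely transparent. The paper's route buys a simpler event (no nested additions to untangle) and a cleaner probability computation, at the cost of the weaker exponent $k/2-1$. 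One stylistic remark: your phrase ``chaining these conditionally independent events'' is slightly loose --- the events $\{z_t=b-1\}$ are not independent --- but the tower-property argument you describe (each $\{z_t=b-1\}$ has conditional probability $1/b$ given the lower digits) is exactly right and gives the product bound.
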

  
\begin{proof}
Let $k$ and $p$ be two integers. We observe that if $k=1,2$, the inequality is trivial so we assume that $k\ge 3$. We call \emph{buffer strip} the set of indices corresponding to the positions of the digits between $B_p$ and $B_{k+p}$ (both excluded). It depends on $r$, $p$ and $k$ so we denote it by $\mathcal{I}_{r,p,k}$. We consider the event
\[C:=\{x\in\XX : \exists j\in\mathcal{I}_{r,p,k} \mbox{ with }r_j<b-1 \mbox{ such that } x_j=0\}.\]
We are going to show that, for $k$ large enough, $C$ is a high-probability event and that, conditionned to $C$, two events  $A\in\sigma\left(X_i^{(r)}: 1\le i\le p\right)$ and $B\in\sigma\left(X_i^{(r)}: i\ge k+p\right)$ are always independent. \begin{figure}[H]
    \centering
    \includegraphics[scale=0.35]{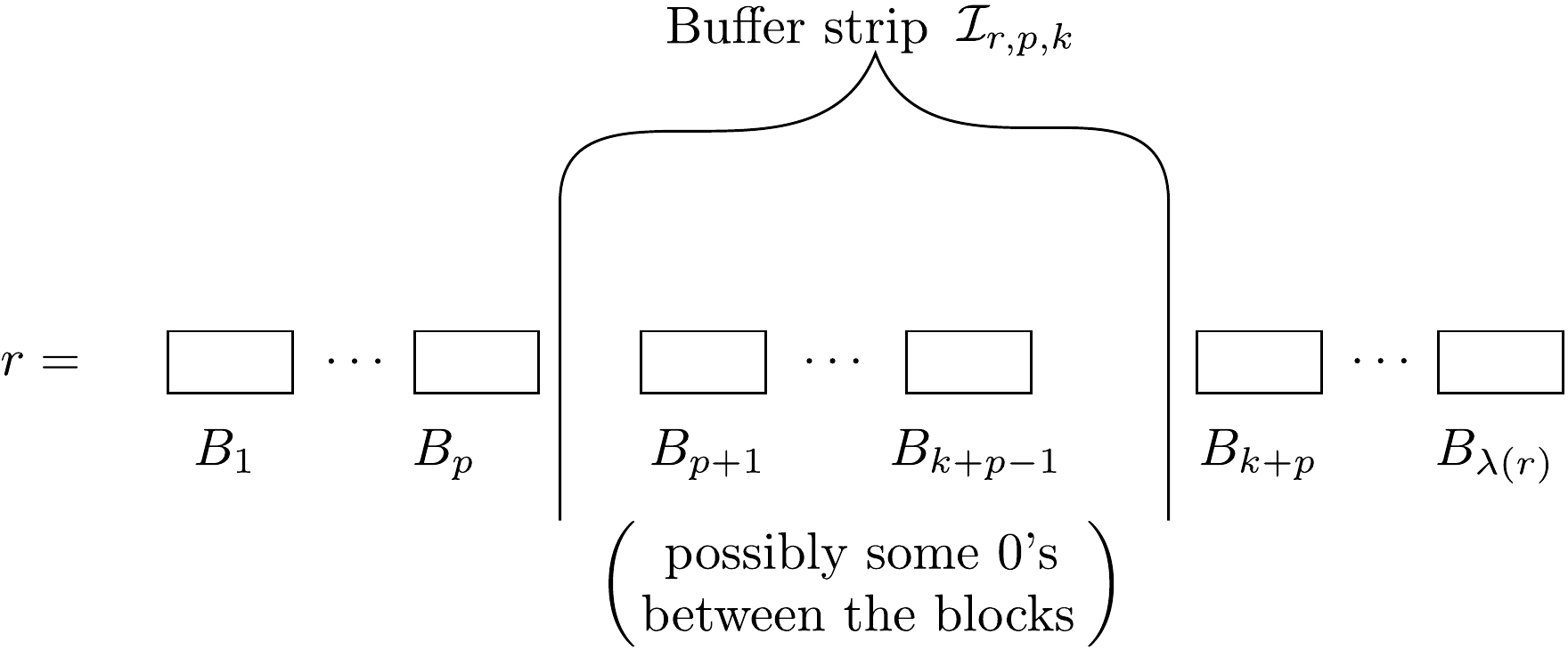}
    \caption{Visualization of the buffer strip.}
\end{figure}
Denoting by $\overline{C}$ the complement of $C$ in $\XX$, we have
\begin{align}\label{proba_de_c_compl}
    \PP(\overline{C})&=\left(\frac{b-1}{b}\right)^t
\end{align}
where $t:=\Big|\{j\in\mathcal{I}_{r,p,k}\mbox{ : } r_j\neq b-1\}\Big|$.
We are going to show that 
\begin{align}\label{inegalite_sur_t}
    t&\ge \frac{k}{2}-1.
\end{align}
Indeed, there are $k-1$ non-zero blocks in the buffer strip. Let $\ell\in\NN$ be the number of blocks of ($b-1$)'s. Then there are $k-1-\ell$ single-digit blocks. There are two cases.
\begin{enumerate}
    \item If $\ell\le \frac{k}{2}$, then $k-1-\ell\ge \frac{k}{2}-1$ and we get \eqref{inegalite_sur_t}.
    \item Otherwise, $\ell> \frac{k}{2}$. Since the blocks of ($b-1$)'s are separated using blocks of zeros or single-digit blocks, there are at least $\frac{k}{2}-1$ blocks of zeros of single-digit blocks, which also yields \eqref{inegalite_sur_t}.
\end{enumerate}
So, we get from \eqref{proba_de_c_compl} and \eqref{inegalite_sur_t} that
\begin{align*}
    \PP(C)\ge 1-\left(\frac{b-1}{b} \right)^{\frac{k}{2}-1}>0.
\end{align*}
Now, let $A\in\sigma\left(X_i^{(r)}: 1\le i\le p\right)$ with $\PP(A)>0$ and $B\in\sigma\left(X_i^{(r)}: i\ge k+p\right)$. 

Observe that $A$ and $C$ are independent. Indeed, $C$ only depends on indices in $\mathcal{I}_{r,p,k}$ while $A$, by construction of the random variables $(X_i^{(r)})_{1\le i\le \lambda(r)}$, only depends on the subset of indices on the left-hand side of the buffer strip. We deduce 
\begin{align}\label{indep_A&C}
    \PP(A\cap C)=\PP(A)\PP(C)>0.
\end{align}

Observe also that, conditionned to $C$, $A$ and $B$ are independent. Indeed, when $C$ is realized, there exists an index $j$ in $\mathcal{I}_{r,p,k}$ such that $r_j\neq b-1$ and $x_j=0$. At this position, a carry cannot be created and, furthermore, a carry propagation coming from the right-hand side will be stopped at this index. In other words, when $C$ is realized, the carries created by the blocks $B_i$, $i\ge k+p$, never spread on the left-hand side of the buffer strip. Moreover, we deduce that $A$ and $B\cap C$ are independent. Indeed, we compute
\begin{align}\label{indep_A&(B&C)}
    \PP(A\cap B\cap C)&=\PP_C(A\cap B)\PP(C) \notag\\
                      &=\PP_C(A)\PP_C(B)\PP(C) \notag \\
                      &=\PP(A)\PP(B\cap C).
\end{align}
Now, we have
\begin{align}\label{inegalite_triang}
    |\PP_A(B)-\PP(B)| &\le |\PP_A(B)-\PP_{A\cap C}( B)| +|\PP_{A\cap C}(B)-\PP(B)|. 
\end{align}
But, using \eqref{indep_A&C} and \eqref{indep_A&(B&C)}, we obtain
\[\PP_{A\cap C}( B)=\frac{\PP(A\cap B\cap C)}{\PP(A\cap C)}=\frac{\PP(B\cap C)
}{\PP(C)}=\PP_C(B).\]
So, \eqref{inegalite_triang} becomes
\begin{align}\label{inegalite_triang2}
    |\PP_A(B)-\PP(B)| &\le |\PP_A(B)-\PP_{A\cap C}( B)| +|\PP_{C}(B)-\PP(B)|. 
\end{align}
Now, one can show that for any event $D$, %by the law of total probability,
%\begin{align*}
%    \big|\PP(D)-\PP_C(D)\big|&= \big|\PP(C)\PP_C(D)+\PP\left(\overline{C}\right)\PP_{\overline{C}}(D)-\PP_C(D)\big|\\
%        &=\PP\left(\overline{C}\right)\big|\PP_{\overline{C}}(D)-\PP_{C}(D)\big|.
%\end{align*}
%Since both terms are non negative and less than $1$, we have
%\[\big|\PP_{\overline{C}}(D)-\PP_{C}(D)\big|\le 1.\]
%It follows
\[\big|\PP(D)-\PP_C(D)\big| \le \PP\left(\overline{C}\right).\]
This general inequality is also true replacing $\PP$ by $\PP_A$. We observe that the measure $\PP_A$ conditionned to $C$ is the measure $\PP_{A\cap C}$. So, coming back to \eqref{inegalite_triang2}, we get
\[|\PP_A(B)-\PP(B)|\le \PP_A\left(\overline{C}\right)+\PP\left(\overline{C}\right)= 2\PP\left(\overline{C}\right) \le 2\left(\frac{b-1}{b}\right)^{\frac{k}{2}-1}.\]
\end{proof} 

%Using similar arguments, we obtain the same upper bound for the $\alpha$-coefficients. For every $k$ and $n$ in $\NN$, we have
%\[\alpha_n(k)\le\frac{2}{b^{\frac{k}{2}-1}}.\]

\section{Proof of Theorem~\ref{theoprincipal}}

    \subsection{A result from Sunklodas and first step of the proof}

In this section, we state a result by Sunklodas \cite{JKS} about the speed of convergence in the Central Limit Theorem for $\phi$-mixing sequences. Actually, our formulation is new but the proof is an immediate consequence of \cite[Theorem 1]{JKS}.

We first need to introduce some notations. Let $Y$ be a standard normal random variable. Consider $\xi_1, \cdots, \xi_n$ a finite sequence of $n$ random variables with $\phi$-mixing coefficients $\phi(k)$ for $k=1,2, \cdots$. Write 
\[V:=\sqrt{\Var\left(\sum_{i=1}^n\xi_i\right)} \hspace{5mm} \mbox{ and } \hspace{5mm} Z:=\sum_{i=1}^n \frac{\xi_i}{V}.\]
We need to add, for technical reason, some other notations.
\[\Phi_{1/2}:=\sum_{k\ge 1} k
\sqrt{\phi(k)} \hspace{5mm} \mbox{and} \hspace{5mm} \overline{\Phi_{1/2}}:=\max\left\{\sqrt{\Phi_{1/2}},\Phi_{1/2}^2\right\}.\]

We observe that $\Phi_{1/2}$ is defined by a sum on, actually, a finite number of non-zero terms because we are in the case of a finite sequence of random variables. Our formulation of \cite[Theorem 1]{JKS} is the following.
\begin{theo}\label{sunklodas}
Assume for $i=1, \cdots, n$ that $\EE(\xi_i)=0$ and $\EE(\xi_i^4)<\infty$. Let $h:\RR\rightarrow\RR$ be a thrice differentiable function such that $||h^{'''}||_\infty<\infty$. Then
\begin{align*}
    \bigg|\EE\left(h(Z)-h(Y)\right)\bigg| & \le ||h^{'''}||_{\infty}\left(\frac{5}{2}+28\overline{\Phi_{1/2}}\right) \sum_{i=1}^n \EE\left(\left|\frac{\xi_i}{V}\right|^3\right)\\
    &\hspace{2mm} +120||h^{'''}||_{\infty}\overline{\Phi_{1/2}}\sqrt{\sum_{i=1}^n\EE\left(\left|\frac{\xi_i}{V}\right|^2\right)}\sqrt{\sum_{i=1}^n\EE\left(\left|\frac{\xi_i}{V}\right|^4\right)}.
\end{align*}
\end{theo}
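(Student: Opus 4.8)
The plan is to deduce Theorem~\ref{sunklodas} directly from Sunklodas' original result \cite[Theorem 1]{JKS}, so the work is essentially one of faithful translation rather than of genuine analysis. First I would write down the precise hypotheses and conclusion of \cite[Theorem 1]{JKS}: it bounds the smooth-function distance $|\EE h(Z)-\EE h(Y)|$ for a $\phi$-mixing sequence whose terms have zero mean and finite fourth moment, in terms of a Lyapunov-type sum of the normalized third and fourth absolute moments $\sum_i \EE|\xi_i/V|^3$ and $\sum_i \EE|\xi_i/V|^4$, weighted by a functional of the mixing coefficients. The conclusion stated above has exactly this shape, the mixing dependence being encapsulated in $\Phi_{1/2}=\sum_{k\ge 1}k\sqrt{\phi(k)}$ and in $\overline{\Phi_{1/2}}=\max\{\sqrt{\Phi_{1/2}},\Phi_{1/2}^2\}$.

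The first genuine step is to pass from the finite sequence $\xi_1,\dots,\xi_n$ to the (possibly infinite) framework in which Sunklodas' theorem is phrased, should that be necessary. I would invoke the convention recalled just before the statement, or equivalently extend the sequence by setting $\xi_i:=0$ for $i>n$: the $\sigma$-algebras attached to the undefined (or vanishing) variables are trivial, so $\phi(k)=0$ for every $k\ge n$. Consequently $\Phi_{1/2}$ is a finite sum and $\overline{\Phi_{1/2}}$ is finite; moreover neither $V=\sqrt{\Var(\sum_{i=1}^n\xi_i)}$ nor $Z=\sum_{i=1}^n\xi_i/V$ is affected, and the sums $\sum_i\EE|\xi_i/V|^p$ for $p=2,3,4$ gain no new nonzero terms. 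I would then check that the hypotheses transfer verbatim, namely $\EE(\xi_i)=0$ and $\EE(\xi_i^4)<\infty$ for every $i$ (trivially so for $i>n$), and record that $V>0$ so that the normalization is legitimate.

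The second step, and the only one requiring care, is to reconcile Sunklodas' packaging of the mixing dependence with ours. I would verify that the functional of $(\phi(k))_k$ appearing in \cite[Theorem 1]{JKS} is precisely $\overline{\Phi_{1/2}}$, that the explicit numerical constants $\frac{5}{2}$, $28$ and $120$ are transcribed without alteration, and that the smoothness requirement is exactly $||h'''||_\infty<\infty$. I expect this bookkeeping — matching normalizations and constants, and confirming that no stationarity assumption is secretly used — to be the main, and essentially the only, obstacle, since the paper rightly describes the result as an immediate consequence of \cite{JKS}.

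For completeness I would note that a self-contained proof of a bound of this type, bypassing the citation, would proceed by a big-block/small-block decomposition decoupling distant terms through the $\phi$-mixing coefficients, combined with a Lindeberg telescoping (or Stein) argument and a third-order Taylor expansion; the bounded third derivative $||h'''||_\infty$ and the finite fourth moments enter exactly there. This, however, is the content of \cite{JKS}, and the economical route is the reduction described above.
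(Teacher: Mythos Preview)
Your proposal is correct and matches the paper's approach exactly: the paper does not give a proof of Theorem~\ref{sunklodas} at all, stating only that ``our formulation is new but the proof is an immediate consequence of \cite[Theorem 1]{JKS}.'' Your description of the translation --- extending the finite sequence trivially, checking that $\Phi_{1/2}$ is a finite sum, and matching the constants and the mixing functional with Sunklodas' original statement --- is precisely the bookkeeping the paper implicitly delegates to the reader.
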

We are going to show that we can apply this result to prove Theorem~\ref{theoprincipal}. We start with \eqref{vitesse_fonction_reguliere}.
\begin{proof}[Proof of \eqref{vitesse_fonction_reguliere}]
Let $r\in\NN^*$. The variables $(X_i^{(r)})_{1\le i\le \lambda(r)}$ are of zero-mean and have a finite moment of order $4$. Lemma \ref{lemme_de_melange} gives a universal upper bound for $\Phi_{1/2}$. So we can apply Theorem~\ref{sunklodas} to the sequence $(X_i^{(r)})_{1\le i\le \lambda(r)}$. We observe that, from \eqref{lien rho lambda}, Theorem~\ref{encadrement_de_la_variance} and Lemma~\ref{moments bornées de X_i}, for $j=2,3,4$
\[\sum_{i=1}^{\lambda(r)} \EE\left(\left|\frac{X_i^{(r)}}{\sigma_r}\right|^j\right)=\frac{C_j\lambda(r)}{\sigma_r^j}\le \frac{2^jC_j}{b^{\frac{j}{2}}} \rho(r)^{1-\frac{j}{2}}.\]
So, by Theorem~\ref{sunklodas}, there exists $K>0$ such that
\begin{align}\label{majo_fonction_reg}
    \left|\EE\left(h\left(\sum_{i=1}^{\lambda(r)} \frac{X_i^{(r)}}{\sigma_r}\right)-h(Y)\right)\right|
    & \le \frac{K||h'''||_{\infty}}{\sqrt{\rho(r)}} 
\end{align}
\end{proof}
It remains to prove \eqref{vitesse indicatrice}.

    \subsection{Speed of convergence of the cumulative distribution functions}
    
Observe that for all $t\in\RR$
\[F_r(t)-F(t)=\EE\left(\mathbbm{1}_{\left]-\infty,t\right]}\left(\sum_{i=1}^{\lambda(r)} \frac{X_i^{(r)}}{\sigma_r}\right)-\mathbbm{1}_{\left]-\infty,t\right]}(Y)\right).\]
The idea is thus to find, for $t\in\RR$, a family of thrice differentiable function $(h_{t,\varepsilon})_{\varepsilon>0}$ with, for every $\varepsilon>0$, $||h_{t,\varepsilon}^{'''}||_{\infty}<\infty$ and which converges pointwise to the indicator funtion $\mathbbm{1}_{\left]-\infty,t\right]}$ when $\varepsilon$ tends to $0$.

        \subsubsection{Approximation of the indicator function}
There exists a function $f:\RR\rightarrow\RR\in \mathcal{C}^{3}(\RR)$ satisfying the following conditions $f^{'}(0)=f^{'}(1)=f^{''}(0)=f^{''}(1)=f^{'''}(0)=f^{'''}(1)=0$, $f(t)=1$ if $t\le 0$, $f(t)=0$ if $t\ge 1$ and $0\le f(t)\le 1$, for all real $t$.
%For instance, we take
%        \[f(t):=\left\{\begin{array}{ll}
%         1 & \mbox{if } t\le 0, \\
%         1-35t^4+84t^5-70t^6+20t^7& \mbox{if } 0\le t\le 1, \\
%         0 & \mbox{otherwise.}
%    \end{array}\right.
%    \]
%We have the following property satisfied by $f$.
%\begin{lemme}
%    $\forall t\in\RR$, $0\le f(t)\le 1$.
%\end{lemme}
%\begin{proof}
%We only have to show that from $t=0$ to $1$, $0\le f(t)\le 1$. We know that extremal points of $f$ on $[0;1]$ are only reachable  on the critical points or on $t=0$ or $t=1$. We have, for $0\le t\le 1$,
%\[f'(t)=140t^3(t^3-3t^2+3t-1)=140t^3(t-1)^3.\]
%So critical points are $t=0$ and $t=1$. And, by hypothesis on $f$, $f(0)=1$ and $f(1)=0$. So, the maximum of $f$ is $1$ when $t\le 0$, and the minimum of $f$ is $0$ when $1\le t$.
%\end{proof}
Then we define the linear function $\theta_{t,\varepsilon}:[t-\varepsilon,t+\varepsilon]\rightarrow [0,1]$, $u\mapsto \frac{1}{2\varepsilon}(\varepsilon-t+u)$. Finally, we get our approximation by 
        \[h_{t,\varepsilon}(u):=\left\{\begin{array}{ll}
         1 & \mbox{if } u\le t-\varepsilon, \\
         f\circ\theta_{t,\varepsilon}(u)& \mbox{if } t-\varepsilon\le u\le t+\varepsilon, \\
         0 & \mbox{otherwise.}
    \end{array}\right.
    \]
\begin{figure}[H]
    \centering
    \includegraphics[scale=0.35]{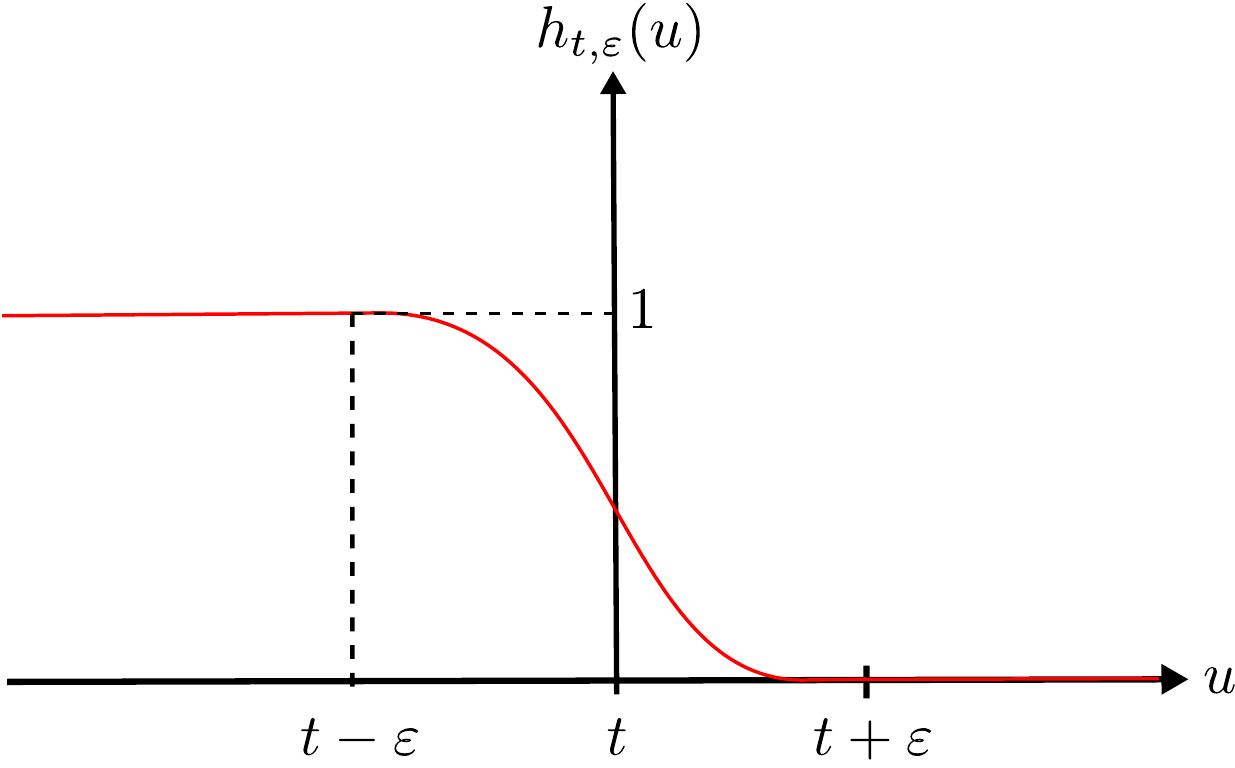}
    \caption{Graph of $h_{t,\varepsilon}$.}
\end{figure}

We have the following properties satisfy by $h_{t,\varepsilon}$.
\begin{lemme}\label{prop de h}
Let $r\ge 1$, let $Y$ be a standard normal random variable.
\begin{enumerate}
    \item $\forall t\in\RR$, the sequence of $\mathcal{C}^3(\RR)$ functions $(h_{t,\varepsilon})_{\varepsilon}$ converges pointwise to the indicator function $\mathbbm{1}_{]-\infty;t]}$ when $\varepsilon$ tends to $0$.
    \item $\forall \varepsilon>0$, $\forall t\in\RR$,  $||h_{t,\varepsilon}^{'''}||_{\infty}=\frac{||f^{'''}||_{\infty}}{8\varepsilon^3}$ and, in particular, the upper bound is independent of $t$.
    \item $\forall\varepsilon>0$, we have, for any random variable $X$
    \begin{align}\label{erreur des fcts de repart}
        \sup_{t\in\RR}\bigg|\PP\left(X\le t\right)-\PP\left (Y\le t \right)\bigg|    &  \le\sup_{t\in\RR}\bigg|\EE\left(h_{t,\varepsilon}\left(X\right)-h_{t,\varepsilon}\left(Y\right)\right)\bigg| \notag \\
        & \quad +\frac{4\varepsilon}{\sqrt{2\pi}}.
    \end{align}
\end{enumerate}
\end{lemme}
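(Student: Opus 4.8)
The plan is to prove the three assertions separately. Assertions (1) and (2) are direct computations, while (3), the uniform bound on the cumulative distribution functions, is the substantive part and is where the smoothness of $h_{t,\varepsilon}$ is exploited. For (1), fix $u\in\RR$ and let $\varepsilon\to 0$: if $u<t$ then for every $\varepsilon<t-u$ we have $u\le t-\varepsilon$, so $h_{t,\varepsilon}(u)=1=\mathds{1}_{]-\infty,t]}(u)$; if $u>t$ then for every $\varepsilon<u-t$ we have $u\ge t+\varepsilon$, so $h_{t,\varepsilon}(u)=0=\mathds{1}_{]-\infty,t]}(u)$. (At the single point $u=t$ one gets $\theta_{t,\varepsilon}(t)=\tfrac12$, hence $h_{t,\varepsilon}(t)=f(\tfrac12)$ for every $\varepsilon$; this exceptional point is immaterial, being integrated away in the sequel.) For (2), on the transition interval $[t-\varepsilon,t+\varepsilon]$ one has $h_{t,\varepsilon}=f\circ\theta_{t,\varepsilon}$ with $\theta_{t,\varepsilon}'(u)=\tfrac{1}{2\varepsilon}$ constant, and $h_{t,\varepsilon}$ is constant off this interval; the boundary conditions $f'(0)=f'(1)=f''(0)=f''(1)=f'''(0)=f'''(1)=0$ guarantee the pieces glue into a genuine $\mathcal{C}^3$ function. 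Three applications of the chain rule give $h_{t,\varepsilon}'''(u)=f'''(\theta_{t,\varepsilon}(u))\,(2\varepsilon)^{-3}$ on the transition interval and $0$ elsewhere, and since $\theta_{t,\varepsilon}$ maps $[t-\varepsilon,t+\varepsilon]$ onto $[0,1]$ while $f'''$ vanishes outside $[0,1]$, the supremum is $\|h_{t,\varepsilon}'''\|_\infty=\|f'''\|_\infty/(8\varepsilon^3)$, independent of $t$.

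The heart of the lemma is (3). The key elementary observation is the double sandwich, valid for every $s$ simply because $0\le f\le 1$:
\[\mathds{1}_{]-\infty,\,s-\varepsilon]}\le h_{s,\varepsilon}\le \mathds{1}_{]-\infty,\,s+\varepsilon]}.\]
Writing $F_X(t):=\PP(X\le t)$ and $\Delta_\varepsilon:=\sup_{s\in\RR}\big|\EE(h_{s,\varepsilon}(X))-\EE(h_{s,\varepsilon}(Y))\big|$, I would apply this with the two shifts $s=t+\varepsilon$ and $s=t-\varepsilon$. For the upper bound, $\mathds{1}_{]-\infty,t]}\le h_{t+\varepsilon,\varepsilon}$ gives
\[F_X(t)\le \EE(h_{t+\varepsilon,\varepsilon}(X))\le \EE(h_{t+\varepsilon,\varepsilon}(Y))+\Delta_\varepsilon\le F(t+2\varepsilon)+\Delta_\varepsilon;\]
for the lower bound, $h_{t-\varepsilon,\varepsilon}\le \mathds{1}_{]-\infty,t]}$ gives $F_X(t)\ge \EE(h_{t-\varepsilon,\varepsilon}(X))\ge F(t-2\varepsilon)-\Delta_\varepsilon$. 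Then I would use that the standard normal density is bounded by $1/\sqrt{2\pi}$, so $F(t+2\varepsilon)-F(t)\le \tfrac{2\varepsilon}{\sqrt{2\pi}}$ and $F(t)-F(t-2\varepsilon)\le \tfrac{2\varepsilon}{\sqrt{2\pi}}$. Combining the two estimates yields $|F_X(t)-F(t)|\le \Delta_\varepsilon+\tfrac{4\varepsilon}{\sqrt{2\pi}}$ uniformly in $t$, which is exactly \eqref{erreur des fcts de repart}.

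The step I expect to demand the most care is precisely the use of the shifted approximants $h_{t\pm\varepsilon,\varepsilon}$ rather than $h_{t,\varepsilon}$ itself. Because the random variable $X$ of interest in the application is discrete (it is supported on $\tfrac{1}{\sigma_r}\ZZ$) and may carry an atom at $t$, one cannot compare $F_X(t)$ directly with $\EE(h_{t,\varepsilon}(X))$ without losing control of the error. Shifting the smooth approximant transfers the whole $\varepsilon$-dependence onto the Gaussian $Y$, whose law is absolutely continuous with bounded density, so that the discretization of $X$ never enters the final estimate. The argument above in fact produces the constant $\tfrac{2\varepsilon}{\sqrt{2\pi}}$; the stated $\tfrac{4\varepsilon}{\sqrt{2\pi}}$ is a safe over-estimate and is amply sufficient for the application that follows.
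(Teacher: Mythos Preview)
Your proof is correct and follows essentially the same route as the paper's: both rely on the sandwich $h_{t-\varepsilon,\varepsilon}\le \mathds{1}_{]-\infty,t]}\le h_{t+\varepsilon,\varepsilon}$ together with the bound $1/\sqrt{2\pi}$ on the standard Gaussian density. Your variant for part (3)---passing through $F(t\pm 2\varepsilon)$ rather than bounding $\EE(h_{t+\varepsilon,\varepsilon}(Y))-\EE(h_{t-\varepsilon,\varepsilon}(Y))$ directly as the paper does---actually yields the sharper constant $2\varepsilon/\sqrt{2\pi}$, and your remark that pointwise convergence in (1) technically fails at the single point $u=t$ (where $h_{t,\varepsilon}(t)=f(\tfrac12)$ for all $\varepsilon$) is a valid subtlety the paper glosses over.
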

The proof of this lemma is given at the end of this paper.
    \subsubsection{Last step of the proof of (\ref{vitesse indicatrice})}
 
\begin{proof}[Proof of \eqref{vitesse indicatrice}]
Let $\varepsilon>0$. From \eqref{majo_fonction_reg} and \eqref{erreur des fcts de repart}, we obtain
%\begin{align*}
%   \sup_{t\in\RR}\left|\PP\left(\frac{\Delta^{(r)}}{\sigma_r}\le t\right)-\PP\left (Y\le t \right)\right|    &  \le\sup_{t\in\RR}\left|\EE\left(h_{t,\varepsilon}\left(\frac{\Delta^{(r)}}{\sigma_r}\right)-h_{t,\varepsilon}\left(Y\right)\right)\right|+\frac{4\varepsilon}{\sqrt{2\pi}}\\
%   &\hspace{-20mm} \le \sup_{t\in\RR}\left[||h_{t,\varepsilon}^{'''}||_{\infty}\left(\frac{5}{2}+28\overline{\Phi_{1/2}}\right) \sum_{i=1}^{\lambda(r)} \EE\left(\left|\frac{X_i^{(r)}}{\sigma_r}\right|^3\right)\right.\\
%    &\left.\hspace{-20mm} +120||h_{t,\varepsilon}^{'''}||_{\infty}\overline{\Phi_{1/2}}\sqrt{\sum_{i=1}^{\lambda(r)}\Var\left(\frac{X_i^{(r)}}{\sigma_r}\right)}\sqrt{\sum_{i=1}^{\lambda(r)}\EE\left(\left|\frac{X_i^{(r)}}{\sigma_r}\right|^4\right)}\hspace{0.5mm}\right]\\
%    &\hspace{-20mm}+\frac{4\varepsilon}{\sqrt{2\pi}}.
%\end{align*}
\begin{align*}
    \sup_{t\in\RR}\left|\PP\left(\frac{\Delta^{(r)}}{\sigma_r}\le t\right)-\PP\left (Y\le t \right)\right|    & \le\sup_{t\in\RR}\left|\EE\left(h_{t,\varepsilon}\left(\frac{\Delta^{(r)}}{\sigma_r}\right)-h_{t,\varepsilon}\left(Y\right)\right)\right|+\frac{4\varepsilon}{\sqrt{2\pi}}\\
    &\le \sup_{t\in\RR}\left(\frac{K||h_{t,\varepsilon}'''||_{\infty}}{\sqrt{\rho(r)}}\right)+\frac{4\varepsilon}{\sqrt{2\pi}}\\
\end{align*}
Lemma \ref{prop de h} gives $||h_{t,\varepsilon}^{'''}||_{\infty}=\dfrac{||f^{'''}||_{\infty}}{8\varepsilon^3}$. So, we obtain
\[\sup_{t\in\RR}\left|\PP\left(\frac{\Delta^{(r)}}{\sigma_r}\le t\right)-\PP\left (Y\le t \right)\right|\le \frac{||f^{'''}||_{\infty}K}{8\varepsilon^3\sqrt{\rho(r)}}+\frac{4\varepsilon}{\sqrt{2\pi}}\]
%Also, we recall
%\[\sum_{i=1}^{\lambda(r)} \EE\left(\left|\frac{X_i^{(r)}}{\sigma_r}\right|^k\right)\le \frac{2^kC_k}{b^{\frac{k}{2}}} \rho(r)^{1-\frac{k}{2}}.\]
%So,
%\begin{align*}
%    \sup_{t\in\RR}\left|\PP\left(\frac{\Delta^{(r)}}{\sigma_r}\le t\right)-\PP\left (Y\le t \right)\right|&\le \frac{105C_3}{2b^{\frac{3}{2}}}\left(\frac{5}{2}+28\overline{\Phi_{1/2}}\right)\frac{1}{\varepsilon^3\sqrt{\rho(r)}}\\
%    &\hspace{2mm}+12600b^2\frac{C_2\sqrt{C_4}}{\varepsilon^3\sqrt{\rho(r)}}\overline{\Phi_{1/2}}+\frac{4\varepsilon}{\sqrt{2\pi}}
%\end{align*}
Now, we choose $\varepsilon>0$ such that $\dfrac{1}{\varepsilon^3\sqrt{\rho(r)}}= \dfrac{4\varepsilon}{\sqrt{2\pi}}$ %. We obtain $\varepsilon=\dfrac{(2\pi)^{\frac{1}{8}}}{\sqrt{2}\rho(r)^{\frac{1}{8}}}$ 
and get the existence of a constant $\widetilde{K}>0$ such that
\[\sup_{t\in\RR}\left|\PP\left(\frac{\Delta^{(r)}}{\sigma_r}\le t\right)-\PP\left (Y\le t \right)\right|\le\frac{\widetilde{K}}{\rho(r)^{\frac{1}{8}}}.\]
\end{proof}
It only remains to prove Lemma~\ref{prop de h}.

        \subsubsection{Proof of Lemma~\ref{prop de h}}
\begin{proof}[Proof of Lemma~\ref{prop de h}]
Let $\varepsilon>0$. The first point is trivial by construction of $h_{t,\varepsilon}$. The second point is also quite simple to show. %We observe that $\theta_{t,\varepsilon}^{''}$ and $\theta_{t,\varepsilon}^{'''}$ are null functions. Then
We write
\begin{align*}
    \sup_{u\in\RR} \left|h_{t,\varepsilon}^{'''}(u)\right| &=\sup_{t-\varepsilon\le u\le t+\varepsilon}\left|h_{t,\varepsilon}^{'''}(u)\right| \\
            &= \sup_{t-\varepsilon\le u\le t+\varepsilon} \left|\theta_{t,\varepsilon}^{'}(u)\right|^3\left|f^{'''}\circ\theta_{t,\varepsilon}(u)\right|\\
            &=\frac{1}{8\varepsilon^3}\sup_{0\le u\le 1} \left|f^{'''}(u)\right|.
\end{align*}
%And we can compute that $\displaystyle\sup_{0\le u\le 1} \left|f^{'''}(u)\right|=\frac{105}{2}$. 
For the last point, let $t$ and $x\in\RR$. Since 
\[h_{t-\varepsilon,\varepsilon}(x)\le \mathbbm{1}_{]-\infty,t]}(x)\le h_{t+\varepsilon,\varepsilon}(x),\]
we deduce that for any random variable $X$
\begin{align}\label{premiere ineg}
    \EE\left(h_{t-\varepsilon,\varepsilon}\left(X\right)\right)\le \PP\left(X\le t\right)\le \EE\left(h_{t+\varepsilon,\varepsilon}\left(X\right)\right).
\end{align}
%We have the following inequalities.
%\begin{align}\label{premiere ineg}
%    \EE\left(h_{t-\varepsilon,\varepsilon}\left(\frac{\Delta^{(r)}}{\sigma_r}\right)\right)\le \PP\left(\frac{\Delta^{(r)}}{\sigma_r}\le t\right)\le \EE\left(h_{t+\varepsilon,\varepsilon}\left(\frac{\Delta^{(r)}}{\sigma_r}\right)\right).
%\end{align}
%Indeed,
%\begin{align*}
%    \EE\left(h_{t-\varepsilon,\varepsilon}\left(\frac{\Delta^{(r)}}{\sigma_r}\right)\right) &=\int_{-\infty}^t\underbrace{ h_{t-\varepsilon,\varepsilon}\left(\frac{s}{\sigma_r}\right)}_{\le 1}\mathrm{d}\PP \\
%    &\le \PP\left(\frac{\Delta^{(r)}}{\sigma_r}\le t\right) \\
%    &\le \int_{-\infty}^{+\infty}\underbrace{\mathbbm{1}_{]-\infty;t]}\left(\frac{s}{\sigma_r}\right)}_{=h_{t+\varepsilon,\varepsilon}\left(\frac{s}{\sigma_r}\right)}\mathrm{d}\PP +\int_{t}^{t+2\varepsilon} h_{t+\varepsilon,\varepsilon}\left(\frac{s}{\sigma_r}\right)\mathrm{d}\PP\\
%    &=\EE\left(h_{t+\varepsilon,\varepsilon}\left(\frac{\Delta^{(r)}}{\sigma_r}\right)\right)
%\end{align*}
%Likewise, we have
%\begin{align}\label{deuxieme ineg}
%\EE\left(h_{t-\varepsilon,\varepsilon}\left(Y\right)\right)\le \PP\left(Y\le t\right)\le \EE\left(h_{t+\varepsilon,\varepsilon}\left(Y\right)\right).
%\end{align}
%But, for $Y$ we also have
%\begin{align}\label{troisieme ineg}
%    \EE\left(h_{t+\varepsilon,\varepsilon}(Y)\right)-\EE\left(h_{t-\varepsilon,\varepsilon}(Y)\right)\le \frac{4\varepsilon}{\sqrt{2\pi}}.
%\end{align}
%Indeed, we observe that the density of $Y$ is bounded by $\frac{1}{\sqrt{2\pi}}$ and
Moreover, 
\[\EE\left(h_{t+\varepsilon,\varepsilon}(Y)\right)-\EE\left(h_{t-\varepsilon,\varepsilon}(Y)\right)=\int_{t-2\varepsilon}^{t+2\varepsilon}\underbrace{\left(h_{t+\varepsilon,\varepsilon}(y)-h_{t-\varepsilon,\varepsilon}(y)\right)}_{\le 1}\frac{e^{\frac{-y^2}{2}}}{\sqrt{2\pi}}\mathrm{d}y\le \frac{4\varepsilon}{\sqrt{2\pi}}.\]
%Inequalities \eqref{deuxieme ineg} and \eqref{troisieme ineg} lead to 
Hence, we get that
\begin{align}\label{quatrieme ineg}
    \EE\left(h_{t+\varepsilon,\varepsilon}(Y)\right)-\frac{4\varepsilon}{\sqrt{2\pi}}\le \PP\left(Y\le t\right)\le \EE\left(h_{t-\varepsilon,\varepsilon}(Y)\right)+\frac{4\varepsilon}{\sqrt{2\pi}}.
\end{align}
Then, we subtract \eqref{quatrieme ineg} from \eqref{premiere ineg} and we take the supremum over $t\in\RR$ , observing that 
\[\sup_{t\in\RR} \bigg|\EE\bigg(h_{t-\varepsilon,\varepsilon}\left(X\right)-h_{t-\varepsilon,\varepsilon}(Y)\bigg)\bigg|=\sup_{t\in\RR} \bigg|\EE\bigg(h_{t+\varepsilon,\varepsilon}\left(X\right)-h_{t+\varepsilon,\varepsilon}(Y)\bigg)\bigg|.\]
We get \eqref{erreur des fcts de repart}.
\end{proof}

\bibliographystyle{plain}
\bibliography{sample}  

\end{document}